\documentclass[reqno]{amsart}
\usepackage{amsmath,amssymb,amsthm}
\usepackage{mathtools}
\usepackage{tikz}
\usepackage[hidelinks,unicode]{hyperref}

\newtheorem{theorem}{Theorem}
\newtheorem{lemma}[theorem]{Lemma}
\newtheorem{prop}[theorem]{Proposition}
\theoremstyle{definition}
\numberwithin{equation}{section}

\newcommand{\eps}{\varepsilon}
\newcommand{\Dv}{\D\ve}
\newcommand{\Gd}{\Gamma_{\rm D}}
\newcommand{\Gn}{\Gamma_{\rm N}}
\newcommand{\Gf}{\Gamma_{\rm F}}

\newcommand{\f}[2]{\frac{#1}{#2}}
\newcommand{\dd}[1]{\,\mathrm{d}{#1}}
\newcommand{\embl}{\hookrightarrow}
\newcommand{\ii}{\int_{\Omega}}
\newcommand{\inn}{\int_{\Gn}}
\newcommand{\nn}{\nabla}
\newcommand{\wc}{\rightharpoonup}
\newcommand{\Sym}{\R^{3\times3}_{\rm sym}}

\newcommand{\bb}{\mathbb}
\newcommand{\R}{\bb R}
\newcommand{\N}{\bb N}

\newcommand{\D}{\bb D}
\newcommand{\Sb}{\bb S}
\newcommand{\I}{\bb I}
\newcommand{\T}{\bb T}

\newcommand{\Cl}{\mathcal{C}}
\newcommand{\Ul}{\mathcal{U}}
\newcommand{\Vl}{\mathcal{V}}
\newcommand{\X}{\mathcal{X}}

\newcommand{\pr}{p}
\newcommand{\qr}{q}

\newcommand{\bs}{\boldsymbol}
\newcommand{\ve}{\bs v}
\newcommand{\we}{\bs w}
\newcommand{\uu}{\bs u}
\newcommand{\n}{\bs n}

\newcommand{\z}{\bs z}
\newcommand{\fe}{\bs f}
\newcommand{\gb}{\bs g}
\newcommand{\fit}{\bs\varphi}
\newcommand{\pit}{\bs\psi}
\newcommand{\etb}{\bs\xi}
\newcommand{\s}{\bs s}

\newcommand{\UUU}{}
\newcommand{\EEE}{}
\parskip2mm
\DeclarePairedDelimiter{\norm}{\lVert}{\rVert}
\DeclarePairedDelimiter{\scal}{\langle}{\rangle}
\DeclareMathOperator{\di}{div}
\DeclareMathOperator{\rot}{rot}
\DeclareMathOperator*{\esssup}{ess\,sup}

\usepackage{accents}
\renewcommand*{\dot}[1]{\accentset{\bs\bullet}{#1}}
\newcommand{\doc}[1]{\accentset{\bs\circ}{#1}}

\title[Variational resolution of the Navier-Stokes with
outflows]{Variational resolution of outflow boundary conditions for \UUU incompressible Navier-Stokes}
\author[M. Bathory]{Michal Bathory}
\address[Michal Bathory]{Faculty of Mathematics, University of
	Vienna, Oskar-Morgenstern-Platz 1, A-1090 Vienna, Austria}
\email{michal.bathory@univie.ac.at}
\urladdr{http://www.mat.univie.ac.at/$\sim$bathory}
\author[U. Stefanelli]{Ulisse Stefanelli}
\address[Ulisse Stefanelli]{Faculty of Mathematics, University of
  Vienna, Oskar-Morgenstern-Platz 1, A-1090 Vienna, Austria,
Vienna Research Platform on Accelerating
  Photoreaction Discovery, University of Vienna, W\"ahringerstra\ss e 17, 1090 Wien, Austria,
 \& Istituto di
  Matematica Applicata e Tecnologie Informatiche {\it E. Magenes}, via
  Ferrata 1, I-27100 Pavia, Italy
}
\email{ulisse.stefanelli@univie.ac.at}
\urladdr{http://www.mat.univie.ac.at/$\sim$stefanelli}
\subjclass[2010]{35A15, 35B30, 76D05}
\keywords{Navier-Stokes equations, weighted energy dissipation, outflow boundary conditions, do-nothing boundary condition, Navier's slip, non-Newtonian fluid} 

\begin{document}
	
\begin{abstract}
  \UUU This paper focuses on the so-called Weighted Inertia-Dissipation-Energy (WIDE) variational approach for the approximation of unsteady Leray-Hopf solutions of the incompressible Navier-Stokes system. Initiated in \cite{OSU}, this variational method is here extended to the case of non-Newtonian fluids with power-law index $r\geq11/5$ in three space dimensions and large nonhomogeneous data. Moreover, boundary conditions are not imposed on some parts of boundaries, representing, e.g., outflows. Correspondingly, natural boundary conditions arise from the minimization. In particular, at walls  we recover boundary conditions of Navier-slip type. At outflows and inflows, we obtain the condition $-\f12|\ve|^2\n+\T\n=0$. This provides the first theoretical explanation for the onset of such boundary conditions.
\EEE

	\end{abstract}
	
	\maketitle
	
	\section{Introduction}

        \UUU In this work, we are interested in a variational
        resolution technique of the incompressible Navier-Stokes
        system by means of the so-called {\it Weighted
        Inertia-Dissipation-Energy} (WIDE) functional approach \cite{Serra-Tilli10,dg}.

We \EEE consider \UUU the \EEE unsteady flow of an incompressible
fluid through a~generalized channel $\Omega$ with inlets $\Gd$, walls
$\Gn$, and outlets $\Gf^i$, $i=0,\ldots,n$, as depicted on
Figure~\ref{fig}. The flow is modeled in the bulk by the \UUU
incompressible \EEE Navier-Stokes system
	\begin{equation}
		\di\ve=0,\qquad\partial_t\ve+\ve\cdot\nn\ve-\di\Sb(\Dv)+\nn\pr=\fe,\label{NS0}
	\end{equation}
	where $\Sb$ describes the constitutive relation between the
        Cauchy stress tensor and the symmetric velocity gradient
        $\Dv\coloneqq\f12(\nn\ve+(\nn\ve)^T)$.

\UUU The main focus of this paper is to \EEE show \UUU that weak solutions \EEE to this system
can be obtained \UUU as limits \EEE as $\eps\to0_+$ of \UUU minimizers
\EEE of the {\it WIDE} functionals
	\begin{align}
		I_{\eps}(\ve)&=\int_0^{\infty} e^{-\frac{t}{\eps}}\ii\Big(\f{\eps}2|\doc{\ve}|^2-\fe\cdot\ve\Big)\dd{t}\nonumber\\
		&\quad+\int_0^{\infty} e^{-\frac{t}{\eps}}\Big(\ii\int_0^1\Sb_{\eps}(\lambda\Dv)\cdot\Dv\dd{\lambda}+\inn\int_0^1\s_{\eps}(\lambda\ve)\cdot\ve\dd{\lambda}\Big)\dd{t},\label{I0}
	\end{align}
	when minimized over \UUU whole \EEE trajectories satisfying \textit{only} the constraints
	\begin{equation}\label{constrs}
		\di\ve=0,\quad\ve\vert_{\Gd}=\UUU \ve_{\rm D}, \EEE \quad\ve\vert_{\Gn}\cdot\n=0,\quad\int_{\Gf^i}\ve\cdot\n=F_i,
	\end{equation}
	for some given (inflow) data $\ve_{\rm D}$ and some given net flux rates $F_i$ through $\Gf^i$, $i=0,1,\ldots,n$.
\begin{figure}\label{fig}
	\centering
	\tikzset{every picture/.style={line width=0.75pt}}       
	\begin{tikzpicture}[x=0.75pt,y=0.75pt,yscale=-0.8,xscale=0.7]
		\draw [line width=1] (80,106) .. controls (248,104) and (307,183) .. (306,46) ;
		\draw [line width=1] (350,46) .. controls (330,130) and (330,149) .. (435,46) ;
		\draw [line width=1] (240,226) .. controls (340,124) and (370,165) .. (387,226) ; 
		\draw [line width=1] (485,46) .. controls (329,131) and (561,210) .. (610,180) ;
		\draw [line width=1] (80,156) .. controls (278,158) and (291,157) .. (215,226) ;
		\draw [line width=1] (424,226) .. controls (387,131) and (561,240) .. (610,210) ;
		\draw [dash pattern={on 0.84pt off 2.51pt}]  (306,46) -- (350,46) ;
		\draw [dash pattern={on 0.84pt off 2.51pt}]  (80,106) -- (80,156) ;
		\draw [dash pattern={on 0.84pt off 2.51pt}]  (436,46) -- (485,46) ;
		\draw [dash pattern={on 0.84pt off 2.51pt}]  (215,226) -- (240,226) ;
		\draw [dash pattern={on 0.84pt off 2.51pt}]  (610,180) -- (610,210) ;
		\draw [dash pattern={on 0.84pt off 2.51pt}]  (387,226) -- (424,226) ;
		\draw [dash pattern={on 4.5pt off 3pt}] (80,156) .. controls (162,139) and (162,123) .. (80,106) ;
		\draw (318,130) node [anchor=north west][inner sep=0.75pt] {$\Omega$};
		\draw (320,22) node [anchor=north west][inner sep=0.75pt] {$\Gf^0$};
		\draw (218,230) node [anchor=north west][inner sep=0.75pt] {$\Gf^0$};
		\draw (615,185) node [anchor=north west][inner sep=0.75pt] {$\Gf^0$};
		\draw (50,123) node [anchor=north west][inner sep=0.75pt] {$\Gd$};
		\draw (452,22) node [anchor=north west][inner sep=0.75pt] {$\Gf^1$};
		\draw (395,230) node [anchor=north west][inner sep=0.75pt] {$\Gf^2$};
		\draw (90,60) node [anchor=north west][inner sep=0.75pt][align=left] {solid lines $\Gn$: $\ve\cdot\n=0$};
		\draw (55,165) node [anchor=north west][inner sep=0.75pt] {$\ve=\ve_{\rm D}$};
		\draw (487,35) node [anchor=north west][inner sep=0.75pt] {$\int_{\Gf^1}\ve\cdot\n=F_1$};
		\draw (425,218) node [anchor=north west][inner sep=0.75pt] {$\int_{\Gf^2}\ve\cdot\n=F_2$};	
	\end{tikzpicture}
	\caption{A two-dimensional example of the domain $\Omega$.}
\end{figure}
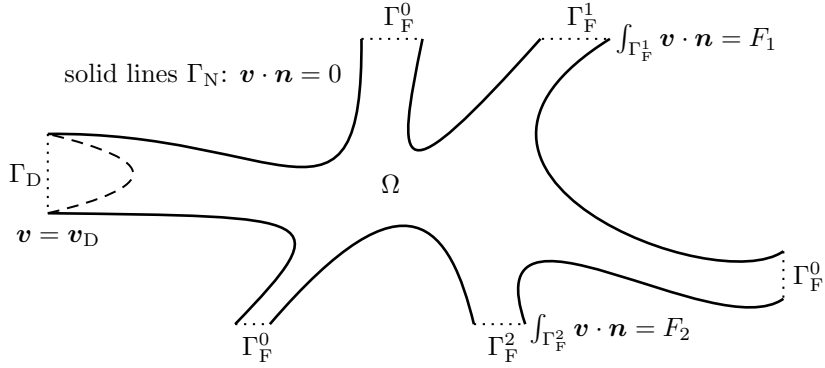
	The symbol $\doc{\ve}$ denotes a~kind of material derivative defined by
	\begin{equation}
		\doc{\uu}\coloneqq\partial_t\uu+\rot\uu\times\uu=\partial_t\uu+\uu\cdot\nn\uu-\nn(\tfrac12|\uu|^2).\label{dotu}
              \end{equation}
     \UUU Here, \EEE $\rot\ve=\nn\times\ve$, i.e.,
     $(\rot\ve)_i=\sum_{j,k=1}^3\eps_{ijk}\partial_j\ve_k$, where
     $\eps_{ijk}$ is the Levi-Civita symbol. Further, the symbol $\n$
     is the outward normal vector on $\partial\Omega$.
\UUU In fact, \UUU $\doc{\uu} $ \EEE can be replaced by the usual material derivative
	$	\dot{\uu}\coloneqq\partial_t\uu+\uu\cdot\nn\uu$
              \UUU in some specific cases, as explained below. \EEE
	Moreover, the function $\s$ is an analogue of $\Sb$ on the
        boundary, modeling the friction forces on $\Gn$. The functions
        $\Sb_{\eps}$ and $\s_{\eps}$ approximate $\Sb$ and $\s$,
        respectively, by improving their asymptotic growth.

        \UUU The relation between the
        minimization of the WIDE functional $I_\eps$ and the Navier-Stokes system is
        revealed by {\it formally} computing the Euler-Lagrange
        equation of $I_\eps$, that is, by assuming smoothness. Let
        $\ve_{\eps}$ minimize $I_\eps$ in some reasonable set of trajectories
        obeying \eqref{constrs} and compute the Gateaux
        derivative of $I_{\eps}$ with respect to $\ve$ in a~direction
        $\fit$, which \textit{does not have fixed boundary values on
          $\Gn$ and $\Gf$}. \UUU Via \EEE  integration by parts (more precisely, the Stokes theorem) this leads to 
	\begin{align}
		&\doc{\ve}_{\eps}-\eps\partial_t\doc{\ve}_{\eps}+\eps\doc{\ve}_{\eps}\times\rot\ve_{\eps}+\eps\rot(\ve_{\eps}\times\doc{\ve}_{\eps})-\di\Sb_{\eps}(\Dv_{\eps})+\nn\qr_{\eps}\nonumber\\
		&\quad+(\Sb_{\eps}(\Dv_{\eps})\n+\s_{\eps}(\ve_{\eps}))_{\tau}\vert_{\Gn}+(\eps\ve_{\eps}\times\doc{\ve}_{\eps}\times\n-\qr_{\eps}\n+\Sb_{\eps}(\Dv_{\eps})\n)\vert_{\Gf}=\fe,\label{EL0}
	\end{align}
	where $\qr_{\eps}$ is the Lagrange multiplier corresponding to
        the constraint $\di\ve_{\eps}=0$. \UUU  Here,
the subscript ${\tau}$ denotes \UUU the \EEE tangential part of
        a~vector on $\partial\Omega$, i.e.,
        $\we_{\tau}\coloneqq\we-(\we\cdot\n)\n=\n\times\we\times\n$.
        
\UUU
        System \eqref{EL0}
        is of second order in time. In fact, the occurrence of
        the term $-\eps\partial_t\doc{\ve}_{\eps}$ shows the {\it
          elliptic-in-time} character of \eqref{EL0}. \EEE 
        \UUU Formally taking \EEE the limit $\eps\to0_+$ then leads to
	\begin{equation}\label{gr0}
		\doc{\ve}+\nn\qr-\di\Sb(\Dv)+(\Sb(\Dv)\n+\s(\ve))_{\tau}\vert_{\Gn}+(-\qr\n+\Sb(\Dv)\n)\vert_{\Gf}=\fe,
	\end{equation}
	which, using \eqref{dotu}, $\di\ve=0$, and defining $\pr\coloneqq\qr+\f12|\ve|^2$, can be rewritten as
	\begin{align}\label{gr}
		&\partial_t\ve+\ve\cdot\nn\ve+\nn\pr-\di\Sb(\Dv)\nonumber\\
		&\qquad+(\Sb(\Dv)\n+\s(\ve))_{\tau}\vert_{\Gn}+(-\pr\n-\tfrac12|\ve|^2\n+\Sb(\Dv)\n)\vert_{\Gf}=\fe.
	\end{align}
	Thus, \UUU for $\eps \to 0_+$ we \EEE recover the Navier-Stokes equation
	\begin{equation}\label{NS}
		\partial_t\ve+\ve\cdot\nn\ve+\nn\pr-\di\Sb(\Dv)=\fe
	\end{equation}
	\UUU in \EEE $\Omega$.
        In addition, identity \eqref{gr} \UUU delivers the following \EEE information on the boundary:
	\begin{equation*}
		(\Sb(\Dv)\n+\s(\ve))_{\tau}\vert_{\Gn}+(-\pr\n-\tfrac12|\ve|^2\n+\Sb(\Dv)\n)\vert_{\Gf}=0.
	\end{equation*}
\UUU These, together with the forced boundary conditions
\eqref{constrs}, can be rewritten as \EEE
	\begin{align}
		\ve&=\ve_{\rm D}\;\qquad\text{on }\Gd,\label{ac0}\\
		(\s(\ve)+\Sb(\Dv)\n)_{\tau}=0,\qquad\quad\;\;\,\ve\cdot\n&=0\quad\qquad\text{on }\Gn,\label{ac1}\\
		-\pr\n-\tfrac12|\ve|^2\n+\Sb(\Dv)\n=c_i\n,\quad\int_{\Gf^i}\ve\cdot\n&=F_i\qquad\;\;\text{on }\Gf^i,\label{ac2}
	\end{align}
	for some  constants $c_i$, $i=0,\ldots,n$. Note that, if \EEE  there is more than one outflow, these constants cannot not be fixed a~priori as they are given, e.g., by
	\begin{align}
		c_i&=\f1{|\Gf^i|}\int_{\Gf^i}(-\pr-\tfrac12|\ve|^2+\Sb(\Dv)\n\cdot\n),\quad i=0,\ldots,n,\label{cs}
	\end{align}
	provided that the \UUU integrals are \EEE well-defined. \UUU
        In other words, these   arise as \EEE
       natural boundary condition on $\Gn$ and $\Gf^i$ and \UUU
        are \EEE automatically selected by the minima of $I_{\eps}$.

 \UUU As seen, the minimization of
 $I_\eps$ corresponds to an elliptic-in-time regularization of the
 incompressible Navier-Stokes system. Such regularizations in the
 setting of linear and nonlinear parabolic systems are quite
 classical and have already been considered in
 \cite{Lions:63a,Kohn65,Oleinik}, especially as tool for tackling regularity issues. The reader is
referred to the classical monograph \cite{LM} for an account of
results in the linear setting. The Navier-Stokes system has also been investigated
by elliptic-in-time regularization \cite{Lions:63,Lionsnolinear}, in a
setting however which does not admit a variational structure.   

 The application of the WIDE variational 
 approach to incompressible Navier-Stokes has been initiated in
 \cite{OSU}. There, no-slip boundary conditions are imposed on the
 whole boundary $\partial \Omega$ and the fluid is assumed to be Newtonian. The
 present paper  extends the reach of \cite{OSU} by allowing for in-
 and outlets, by considering general, nonhomogeneous boundary conditions, and by
 allowing non-Newtonian effects in the fluid. \EEE 

 \UUU The possibility of dealing with nonhomogeneous boundary
 conditions, especially with the \EEE outflow boundary condition
 \eqref{ac2}, \UUU is very relevant in relation with applications, \EEE
 see~\cite{Turek} and references therein. \UUU To the best of our
 knowledge, conditions \eqref{ac2} are however \EEE still missing a thorough physical justification. \UUU
 An important contribution of this
 paper is hence that of providing a variational justification of  
 boundary conditions \eqref{ac2}, for they arise as natural conditions
 via minimization of $I_{\eps}$. 

        \UUU The scope of the paper is to make the above formal
        argument rigorous: under
        suitable assumptions on  boundary and initial
          data, we prove that the minimizers of  $I_{\eps}$ over  trajectories
          constrained by \eqref{constrs} converge weakly for
          $\eps\to0_+$ (up to
          subsequences) to a~\UUU {\it Leray-Hopf} solution \cite{Temam} of
          the Navier-Stokes system with boundary conditions
          \eqref{ac0}--\eqref{ac2}.
          As mentioned, the accent is here not on existence, which for
          this system is already known, see \cite{BruneauFabrie,Neustupa2018}, but on variational charaterization of
        natural boundary conditions.

        The structure of the paper is as follows. We collect some
        detail on the physical setting of the problem and on the WIDE
        functional approach in Section
        \ref{sec:phy}.  
        Notation and assumptions are then  presented in Section
        \ref{sec:tech}. The statement of our main result, Theorem
        \ref{Tex}, is in Section \ref{sec:Tex}, where we also record a
        collection of remarks in order to put the statement in
        context. 
        Section \ref{sec:proof} eventually contains the proof of Theorem~\ref{Tex}.

        \EEE

	\section{Physical motivation}\label{sec:phy}
	
	Before we proceed with the rigorous mathematical treatment,
        let us elaborate on the physical meaning of the whole
        procedure and its relation \UUU with \EEE the existing theory.
	
	In Figure~\ref{fig}, the set $\Omega$ is a~representative of
        an open bounded subset of $\R^d$, $d\geq2$, whose Lipschitz
        boundary $\partial\Omega$ is divided into \UUU three different
        types: \EEE 
	\begin{itemize}
		\item[$\Gd$:] A Dirichlet boundary condition is prescribed here. This can model an adhesive boundary (no-slip), or (more importantly) it represents a~prescribed inflow in the nonhomogeneous case.
		\item[$\Gn$:] This set represents the impermeable walls, where $\ve\cdot\n=0$. No condition is imposed in the tangential direction a~priori.
		\item[$\Gf^i$:] The fluid passes freely through this boundary, only the corresponding net flux must be equal to a~given amount, expressed by a~function $F_i$ of time. As such, this represents an artificial boundary, such as outlet or inlet, where nothing is known a~priori about the flow, except for the net flux. Note that due to the incompressibility, it is enough to prescribe $F_i$ for $i=1,\ldots,n$ only, for example.
	\end{itemize}
	The sets $\Gd$, $\Gn$, and $\Gf^i$, $i=0,1,\ldots,n$, are
        open, but \UUU possibly \EEE not connected (in order to
        allow multiple walls, inflows/outflows with shared flux rate
        etc.). However, for simplicity we shall assume that they consist of a~finite number of connected components. Of course, it is possible to simplify this very general setting by omitting certain types of boundaries. We will nevertheless assume that the boundary setting allows for a~corresponding Poincar\'e inequality in $\Omega$.
	
	\subsection{The problem of outflow boundary conditions}
	
	Finding a~reasonable outflow boundary condition for a~flow of
        an incompressible fluid through a~channel is a~longstanding
        problem, which is obviously of great importance in numerics
        and applications. On one hand, some outflow boundary condition
        seems to be required in order to make the problem
        well-posed. On the other hand, usually no a~priori information
        is available at the outflow. Consider for example, the flow in
        a~section of a~pipe. One would formulate the outflow boundary
        condition by trying to replicate the flow behavior as if the
        pipe was a~section of a~much longer pipe, an idealization
        which is often interpreted as the size of the system scaled
        up. \UUU To \EEE this day, there is \UUU still \EEE no
        satisfactory theory dealing with this problem. \UUU Most of
        the \EEE existing mathematical \UUU theories \EEE of
        (incompressible) fluids consider  internal flows only, an assumption that is actually very rarely met in reality. This is in a~sharp contrast with the numerical simulations of the incompressible flow, where several types of outflow boundary conditions are used.
	
	Starting from \cite{Gresho}, where the so-called \textit{do-nothing} boundary condition
	\begin{equation}\label{don}
		-\pr\n+\nu(\nn\ve)\n=0\quad\text{on }\Gf^0,\quad\nu>0,
	\end{equation}
	was introduced for the first time, many outflow boundary
        conditions have been proposed. \UUU All these can be roughly
        divided into \EEE  two classes:
	\begin{itemize}
		\item[(A)] Outflow boundary conditions that are made
                  to fit the geometry of the problem (experiment) \UUU
                  at \EEE hand. This leads to no versatility of application and usually also to poor mathematical properties.
		\item[(B)] Outflow boundary conditions that are guessed, based on the required mathematical properties (such as the validity of the energy estimate, controllable backward flow etc.).
	\end{itemize}
	Although the do-nothing boundary condition \eqref{don} is
        often deemed to be \textit{natural} (since it eliminates the
        whole boundary term in the variational formulation of the
        Navier-Stokes equations), this condition is actually
        a~canonical representative of the class (A). Indeed, this
        condition \UUU is compatible with \EEE  a~Poiseuille flow
        through a~straight channel. This is however the effect of the
        term $(\nn\ve)\n$, \UUU which vanishes \EEE if the outlet is perpendicular to the direction of the flow, which is clearly a~very geometry-dependent property. See \cite{Turek} for some prototypical examples, where \eqref{don} leads to unphysical flows. We also remark that no energy estimate for standard weak solutions of the Navier-Stokes system with \eqref{don} imposed on a~part of the boundary is available, and hence no corresponding large-data, global-in-time existence theory.
	
	On the other hand, the outflow boundary conditions of the class (B) allow to control the unsigned term $|\ve|^2\ve\cdot\n$ on the outlet, hence giving an energy estimate. The outflow boundary conditions
	\begin{equation*}
		-\f12|\ve|^2\n-\pr\n+\nu(\nn\ve)\n=0\quad\text{or}\quad-\frac12\min(0,\ve\cdot\n)\ve-\pr\n-2\nu(\Dv)\n=0,
	\end{equation*}
	studied, e.g., in \cite{Turek}, \cite{BoyerFabrie}, or
        \cite{BraaackMucha}, are typical representatives of the class
        (B). \UUU These \EEE are also particular cases of the class of
        energy-preserving boundary conditions discovered and studied
        in \cite{BruneauFabrie} or more recently in \cite{Ni} using
        a~different approach. \UUU In these works it is also shown
        \EEE  that some of those boundary conditions may produce
        reliable numerical results with respect to experiments, even
        in the case of a~turbulent flow. Unfortunately, the approach
        based on \cite{BruneauFabrie} or \cite{Ni} provides no physical motivation
        for such boundary conditions. This indeed presents a~problem
        since the class (B) is actually huge (basically due to the
        incompressibility constraint, cf.~\cite{Neustupa21}) and
        different outflow boundary conditions can lead to dramatically
        different flow characteristics, at least near the outlet. \UUU
        What is seemingly missing is a specific \EEE selection criterion.
	
	\subsection{An optimization problem \UUU in order to qualify
          boundary conditions}
	
	In this work, we attack the problem of outflow boundary
        conditions by a~rather unorthodox method, that has been so far
        used only in \cite{Bathory2017} to derive the boundary
        conditions of the  do-nothing type for the stationary Stokes
        system. Here, we model the flow by the more appropriate
        unsteady Navier-Stokes equations and we also consider that the
        fluid is non-Newtonian (for example, we allow the dependence
        of the viscosity on the shear rate). The underlying
        mathematical idea is that variational (weak) formulations of
        PDEs (which are also physically more natural) may implicitly
        encode boundary conditions if the test functions have enough
        freedom near the boundary. This leads to looking for
        a~solution in some large space with unspecified boundary
        conditions. It thus seems natural to reformulate everything as
        an optimization problem in such a~space. Choosing an
        appropriate functional $I$ to minimize is, of course,
        a~non-trivial task. On the other hand, in case of the
        dissipative systems such as the Navier-Stokes equations, the
        energy dissipation itself may serve as a~good candidate. In
        fact, this provides a~possible physical explanation and
        a~selection criterion for the obtained (outflow) boundary
        conditions: they are such that the corresponding flow
        dissipates the least amount of energy, in some sense. This
        approach has several other advantages. Firstly, if the
        functional $I$ is indeed related to the physical energy, then
        one \UUU readily \EEE  gets an energy estimate and, \UUU
        consequently, \EEE the existence of a~(weak) solution follows. This is not in contradiction with the lack of an energy estimate for \eqref{don} since we are able to derive \eqref{don} by our method only for certain shear-thickening fluids, cf.~\eqref{don2} below. Another motivation for prescribing the outflow boundary conditions in such an implicit way can be found in \cite{Papa}. There, it is argued that just by applying a~finite-element discretization to the variational formulation of the system with the so-called \textit{no boundary condition}, one implicitly prescribes some outflow boundary conditions that turn out to possess superior numerical properties. See also \cite{Griffiths} and \cite{Renardy} for an explicit resolution of such boundary conditions in 1D.
	
	Adapting the idea of \cite{Bathory2017} to our setting is \UUU
        not a \EEE trivial task. \UUU Indeed, we need to tackle the
        additional problems \EEE  of adding the time evolution and of the intrinsic nonlinearity of the Navier-Stokes equation (which cannot be easily treated as a~constraint). It turns out that both these issues can be solved by a~clever choice of the functional $I$, whose Euler-Lagrange equations coincide (in the bulk) with the unsteady Navier-Stokes system in the sense of a~certain limit.
	
	\subsection{The  WIDE  functional} \UUU As already remarked in
        the Introduction, \EEE the Euler-Lagrange equation \eqref{EL0}
        \UUU of the WIDE functional $I_\eps$ is nothing but an 
          elliptic-in-time regularization of the original
          Navier-Stokes system \EEE
        \eqref{NS0}. 
        \UUU The use of WIDE variational approach \EEE can be tracked
        back \UUU at least to Ilmanen \cite{Ilmanen}, who used it to tackle 
existence and partial
regularity of the  Brakke mean-curvature flow of
varifolds. An application to existence of periodic
solutions for gradient flows is given by {
  Hirano}  \cite{Hirano94}.  The variational nature of elliptic
  regularization is at the core of \cite[Problem~3, p.~487]{Evans98}
  of the classical textbook by {Evans}. Two examples of
relaxation related to micro-structure evolution have been provided
 in  \cite{Conti-Ortiz08} and the case of mean-curvature evolution
of Cartesian surfaces is in \cite{spst}. The analysis of the WIDE
approach for abstract gradient flows for $\lambda$-convex and nonconvex energies is
in \cite{ms3,akst5} in the Hilbertian case and in \cite{cras,metric_wed} in
the metric case.  {Melchionna} \cite{Melchionna} extended the
theory to classes of nonpotential perturbations and {B\"ogelein, Duzaar, \&
Marcellini}~\cite{Boegelein-et-al14} used this variational approach to
prove the existence of variational
solutions to the equation 
$
u_t -\nabla \cdot f(x,u,\nabla u) + \partial_u f(x,u,
\nabla u)=0
$
where the field $f$ is convex in
$(u, \nabla u)$.  

Doubly nonlinear parabolic evolution equations have been tackled by the
WIDE variational formalism as
well.  The first result in this direction is by  {Mielke \& Ortiz}
\cite{Mielke-Ortiz06}, where the case of rate-independent processes is
addressed. The corresponding time
discretization has been presented in the subsequent \cite{ms2} and  an
application to crack-front propagation in brittle materials  is in
\cite{Larsen-et-al09}. 
The rate-dependent case  has been analyzed in 
\cite{akmest,akst,akst2,akst4}. See also   \cite{Liero-Melchionna} for
a stability result via $\Gamma$-convergence \cite{DalMaso93} and
  \cite{Melchionnas} for an application to the study of
symmetries of solutions.

In the dynamic case, {De Giorgi} conjectured in
\cite{DeGiorgi1996} that the WIDE functional procedure could be
implemented in the setting of semilinear waves.  This 
  has been ascertained in \cite{dg} (for the finite-time case)
and by  {Serra \& Tilli} \cite{Serra-Tilli10} (for the
infinite-time case).   The possibility of following this same
  variational approach in other hyperbolic situations has also been
  pointed out in \cite{DeGiorgi1996}. Indeed, extensions to mixed
hyperbolic-parabolic semilinear equations \cite{dg3}, to different
classes of nonlinear energies \cite{dg2,Serra-Tilli14}, and to
nonhomogeneous equations \cite{TTilli17, TTilli18} are also available.   
The validity of the WIDE approach to the wave equation in
time-dependent domain \cite{DalMaso-DeLuca}, dynamic perfect
plasticity \cite{Davoli16}. Eventually, the incompressible
Navier-Stokes system has been tackled in \cite{OSU}. \EEE 
	
	We build on the work \cite{OSU}, where it is shown that the
        Navier-Stokes equations can be obtained as limit of Euler-Lagrange equations of \UUU mimimizers \EEE of certain WIDE functionals,
        constrained to $\ve=0$ on the boundary. Here, we remove this
        constraint and, in effect, we leave to the functional to
        decide which boundary condition is optimal. Moreover, we also
        allow the non-Newtonian effects both in the bulk and on the
        boundary $\Gn$. This leads to the functional $I_{\eps}$
        defined in \eqref{I0}. The positive number $\eps$  tends to zero and it parametrizes the weight $e^{-\f{t}{\eps}}$, which dampens the impact of the future evolution on a~current state of the given system. The function $\fe$ represents an external body force density, such as gravity. Further, the function $\Sb:\Sym\to\Sym$ represents the stress-strain relation and the function $\Sb_{\eps}$ is given by
	\begin{equation}\label{Sepsdef}
		\Sb_{\eps}(A)\coloneqq\Sb(A)+\eps\sigma_4|A|^2A
		+\eps\sigma_q|A|^{q-2}A,\quad\sigma_4,\sigma_q>0,\quad q>1.
	\end{equation}
	The corresponding integral in the definition of $I_{\eps}$ is the amount of the dissipated energy due to internal friction. We postpone the formulation of precise assumptions on $\Sb$ to the next section, but for the time being, the reader may think of, e.g., the Ladyzhenskaya model
	\begin{equation}\label{lady1}
		\Sb(A)\coloneqq2(\sigma_2+\sigma_r^{\f2{r-2}}|A|^2)^{\f{r-2}2}A,\quad\sigma_2,\sigma_r>0,\quad r>2,
	\end{equation}
	satisfying obviously
	\begin{equation*}
		\Sb(A)\cdot A\geq\sigma_2|A|^2+\sigma_r|A|^r,
	\end{equation*}
	which is actually a~canonical representative of the class of models that is considered later. In the special case $r=2$, it is assumed that $\Sb$ is linear, i.e., $\Sb(A)=2\sigma_2 A$ corresponding to the classical Navier-Stokes model. The nonlinear model \eqref{lady1} is plausible also if $1<r<2$, leading to shear thinning fluids, but this case is excluded in the analysis below. The presence of $\eps>0$ in $\Sb_{\eps}$ further improves its asymptotic growth. This helps us in several places to deal with the convective term in the equation. The parameters $\sigma_2,\sigma_r$, $\sigma_4$, and $\sigma_q$ can be interpreted as certain generalized viscosities with units $[m^2s^{-1}]$, $[m^2s^{r-3}]$, $[m^2]$, and $[m^2s^{q-4}]$, respectively. The integrals with respect to $\lambda$ in the definition of $I_{\eps}$ are just potentials for the functions $\Sb_{\eps}$ and $\s_{\eps}$, see \eqref{Spot} below. These are used to model the viscous forces within the fluid and the friction forces on the wall $\Gn$. The mathematical role of the functions $\s$ and $\s_{\eps}$ is completely analogous to that of $\Sb$ and $\Sb_{\eps}$, respectively, and we shall impose
	\begin{equation}\label{sepsdef}
		\s_{\eps}(\uu)\coloneqq\s(\uu)+\eps\rho_4|\uu|^2\uu+\eps\rho_q|\uu|^{q-2}\uu,\quad\rho_4,\rho_q>0,\quad q>1,
	\end{equation}
	where, for the present time, we choose
	\begin{equation}\label{lady2}
		\s(\uu)\coloneqq2(\rho_2+\rho_r^{\f2{r-2}}|\uu|^2)^{\f{r-2}2}\uu,\quad\rho_2,\rho_r>0,\quad r>2,
	\end{equation}
	
	We do not claim that $I_{\eps}$ is easy to justify physically. At least some of its terms however bear a~clear physical meaning. First of all, the term $-\fe\cdot\ve$ obviously corresponds to the work done by external body forces. Secondly, note that
	\begin{equation*}
		\ii\int_0^1\Sb_{\eps}(\lambda\Dv)\cdot\Dv\dd{\lambda}+\inn\int_0^1\s_{\eps}(\lambda\ve)\cdot\ve\dd{\lambda}=\ii\T\cdot\Dv-\inn\T\n\cdot\ve
	\end{equation*}
	is the amount of dissipated energy in $\overline{\Omega}$ in some time instant by a~non-Newtonian fluid obeying the constitutive relation $\T=-\pr\I+\int_0^1\Sb_{\eps}(\lambda\Dv)\dd{\lambda}$ in $\Omega$ and the boundary condition $(\T\n)_{\tau}=-\int_0^1\s_{\eps}(\lambda\ve)_{\tau}\dd{\lambda}$ on $\Gn$ of the Navier-slip type, where $\T$ is the Cauchy stress tensor and $\pr$ is the pressure. Finally, the term $\eps|\doc{\ve}|^2$ represents the magnitude of certain inertial forces scaled by the parameter $\eps>0$. 
	
	We remark that $\eps$ has units of time and it models \UUU a
        \EEE certain future-time horizon that is still taken into account to get information about the present state. The causality is thus recovered only in the limit $\eps\to0_+$; we refer to \cite{OSU} for a~detailed discussion. The parameter $\eps$ has also a~secondary role as it introduces additional dissipation via $\Sb_{\eps}$ and $\s_{\eps}$, thus stabilizing the functional $I_{\eps}$. Similarly as in \cite{OSU}, we choose to work on the infinite time interval $(0,\infty)$, which avoids prescription of the terminal condition for velocity. This would be otherwise necessary since the Euler-Lagrange equation corresponding to $I_{\eps}$ is of the second order in time. To summarize, compared to the WIDE functional considered in \cite{OSU}, we make four important changes:
	
	1) We remove the constraint $\ve=0$ on $\partial\Omega$ and include friction on $\Gn$.
	
	2) We modify the inertial term by considering $\doc{\ve}$ instead of $\dot{\ve}\coloneqq\partial_t\ve+(\nn\ve)\ve$, in order to have the property
	\begin{equation}\label{canc}
		\doc{\ve}\cdot\ve=\partial_t(\tfrac12|\ve|^2).
	\end{equation}
	This identity is unaffected by the boundary conditions and it is crucial for obtaining an energy estimate for the solution of Euler-Lagrange equation of $I_{\eps}$. It seems that our method works for the standard form of the inertial term $|\dot{\ve}|^2$ only if $r>3$, leading to simpler outflow boundary conditions, without the corrector $\f12|\ve|^2\n$.
	
	3) We develop the whole theory for certain non-Newtonian fluids with nonlinear dependence of the stress on strain, both in the bulk and on the boundary.
	
	4) We use a~different stabilization term, which admits a~certain physical interpretation. Note that the aim of stabilizing $I_{\eps}$ is that of possibly obtaining information on $|\partial_t\ve|^2$ and $|\rot\ve\times\ve|^2$, starting from bounds on $|\doc{\ve}|^2$. This can be achieved by many different choices. It is an open question whether one can obtain similar results without any~stabilization of the WIDE functional, cf.\ \cite{OSU}.
	
	\subsection{Interpretation of \eqref{ac1} and \eqref{ac2}}
	
	Let us return to the boundary conditions \UUU  \eqref{ac1} and
        \eqref{ac2} from the \EEE minimization of $I_{\eps}$.
	
	Relation \eqref{ac1} just prescribes a~condition of the Navier-slip type on the impermeable part of the boundary $\Gn$. The standard Navier-slip corresponds to the case where  $r=2$ and $\s$ is linear.
	
	It is interesting to note that the first identity in \eqref{ac2} is strikingly similar to the stationary Navier-Stokes equation
	\begin{equation*}
		-\nn\pr-\di(\ve\otimes\ve)+\di\Sb(\Dv)=0.
	\end{equation*}
	This is actually quite intuitive since the outlet is just an
        abstract boundary, where {\it nothing should happen to the
          flow}, and therefore the equation that holds there should be
        just a {\it restriction} of the equation that is satisfied by the flow in the bulk. Moreover, since \eqref{ac2} is obviously nonlinear, it cannot be easily treated as a~constraint, unlike the usual do-nothing boundary conditions. This suggests that the outflow boundary conditions should be perceived as a~special kind of PDEs on the outlet boundary. Then, since we work with weak solutions, it is not at all surprising that we are able to identify the outflow boundary conditions only in a~weak sense. This has been also observed in \cite{BoyerFabrie} for a~slightly different type of outflow boundary condition. The analogous remark actually applies also to the tangential part of the Navier-slip-type boundary condition \eqref{ac1}.
	
	Without the corrector $\f12|\ve|^2\n$, relation \eqref{ac2} is sometimes called \textit{constant traction} boundary condition (cf.~\cite{Lanz}), which is just the do-nothing boundary condition \eqref{don}, but for the symmetric part of the velocity gradient. As we shall see, we have the freedom to replace $\Dv$ in the definition of $I_{\eps}$ by other types of gradients, leading, for example, to boundary conditions involving $\nn\ve$ rather than $\Dv$. However, this is at expense of losing the physical meaning of the dissipation terms in $I_{\eps}$ because of the possible failure of material frame indifference.
	
	It has been observed experimentally in \cite{Turek} that the
        correction $\f12|\ve|^2\n$ on the outflow boundary has
        a~positive effect on the flow characteristics. So far it has
        been unclear whether the boundary conditions such as
        \eqref{ac2} can be \UUU somehow \EEE derived, or if they are completely artificial, see the discussion in \cite{Neustupa2018}. Our result suggests, that there actually might be a~certain physical justification behind. On the other hand, in \cite{RannacherNS} it is argued that the outflow boundary condition
		\begin{equation}\label{turbc}
			-\pr\n-\tfrac12|\ve|^2\n+\nu(\nn\ve)\n=0\quad\text{on }\Gf^0,
		\end{equation}
		which is just a~version of \eqref{ac2} with the full velocity gradient (and renormalized pressure constant) is unphysical since it does not allow the Poiseuille flow in a~straight cylindrical pipe (the streamlines are bent inwards near the outlet, see the figures in \cite[Fig.~4~c)]{RannacherNS} or \cite{Turek1994}). This is certainly true if the outlet is flat, however, one has the freedom of prescribing \eqref{ac2} on a~curved outlet boundary (which is just an artificial interface, after all). This can partially compensate for the additional term $\f12|\ve|^2\n$, but not completely. In fact, choosing a~suitable shape of the outlet boundaries should be probably seen as a~part of the whole optimization problem. For simplicity however, in this work we assume that the outflow boundaries are fixed a~apriori.
	
	\subsection{On the pressure and \UUU the \EEE constants $c_i$}
	
	Let us provide more insight into the redefinition of the pressure, that was made from \eqref{gr0} to \eqref{gr}. It is well known that, in case of no inflows/outflows, i.e., if $\ve\cdot\n=0$ on $\partial\Omega$, the pressure can be completely eliminated from the Navier-Stokes equations by the Leray projection. This happens because $\ii\nn\pr\cdot\fit=0$ whenever $\di\fit=0$ and $\fit\cdot\n=0$ on $\partial\Omega$. Consequently, any substitution of the type $\pr(t,x)=\tilde{\pr}(t,x,\ve(t,x))$ does not change the problem. This is no longer true if $\ve\cdot\n\neq0$, in general. Indeed, whenever the boundary condition involves pressure (which we know it will, cf.~\eqref{ac2}), that pressure must correspond to the pressure appearing under the gradient operator in the Navier-Stokes equation.
	
	It is a~well known fact in the modeling of internal flows of incompressible fluids that the pressure is determined only up to a~constant (in space). This is immediately seen from \eqref{NS}. We wish to point out that, although the outflow boundary condition \eqref{ac2} involves pressure, it is actually invariant with respect to the shift of $\pr$ by a~constant due to \eqref{cs}, and therefore the whole system \eqref{NS}, \eqref{ac0}--\eqref{ac2} retains the same property. This is in agreement with the physical intuition that varying the pressure by a~same amount at all points of an incompressible fluid does not affect the flow, and there seems to be no reason why the presence of outflows should change this fact, since one can reasonably assume that the same fluid occupies the space also behind the outlet. Thus, one has always the freedom to impose a~single additional condition on $\pr$, such as the value at a~point, an integral average over some subdomain etc., provided that these quantities can be defined.
	
	There is an interesting analogy between the pressure $\pr$ and the constants $c_i$. While $\pr$ is a~Lagrange multiplier to the constraint $\di\ve=0$, the constants $c_i$ are Lagrange multipliers corresponding to $\int_{\Gf^i}\ve\cdot\n=F_i$, $i=0,\ldots,n$. Physically, constants $c_i$ represent certain generalized pressure drops (cf.\ \cite{Turek}) and they can all be shifted by a~common constant that is incorporated in the pressure $\pr$, without affecting the velocity $\ve$. It also seems possible to treat both $\pr$ and $c_i$ as unknowns of the system and compute them implicitly by minimizing the functional
	\begin{equation*}
		J_{\eps}(\ve,\pr,c_i)\coloneqq I_{\eps}(\ve)+\int_0^{\infty}e^{-\f t{\eps}}\Bigg(-\ii\pr\di\ve+\sum_{i=0}^nc_i\Big(\int_{\Gf^i}\ve\cdot\n-F_i\Big)\Bigg)+\mathcal{S}
	\end{equation*}
	over an enlarged function space without the constraints $\di\we=0$ and $\int_{\Gf^i}\we\cdot\n=0$. The implicit methods based on $J_{\eps}$ may turn out relevant in numerical implementations of the problem, since they give enhanced numerical stability and easier construction of the function spaces for the solution and test functions. However, this obviously leads to a~saddle point problem and some further stabilization $\mathcal{S}$ is necessary to ensure even the existence of a~minimum of $J_{\eps}$. Since it is hard to think of any physical justification behind $\mathcal{S}$ and there are many possible choices, we shall stick to the functional $I_{\eps}$, search for the solution in the spaces constrained by $\di\ve=0$, $\int_{\Gf^i}\ve\cdot\n=F_i$, and then construct $\pr$ and $c_i$ a~posteriori from $\ve$.

	In the remaining part of the paper, we provide a~rigorous counterpart of the procedure outlined in the introduction.
	
	\section{Technical assumptions \& definitions}\label{sec:tech}
	
	In this section, we state precisely the hypotheses needed to prove our main results. The definition of required function spaces is given here as well.
	
	\subsection{Constitutive assumptions for $\Sb$ and $\s$}
	
	Relations \eqref{lady1} and \eqref{lady2} \UUU are specified
        via \EEE assumptions on $\Sb$ and $\s$, which then allow us to apply our results to a~wide class of non-Newtonian fluids.
	
	For the function $\Sb$, we suppose that
	\begin{align}
		\Sb&\in\Cl^1(\Sym;\Sym),\label{SC}\\
		\partial_{ij}\Sb_{kl}&=\partial_{kl}\Sb_{ij}\quad\text{for all }i,j,k,l=1,2,3,\label{Spot}\\
		0&\leq(\Sb(A)-\Sb(B))\cdot(A-B),\label{Smon}\\
		|\Sb(A)|&\leq C(|A|+|A|^{r-1}),\label{Sup}\\
		\Sb(A)\cdot A&\geq\sigma_2|A|^2+\sigma_r|A|^r, \quad\sigma_2,\sigma_r>0,\label{Slow}
	\end{align}
	for all $A,B\in\Sym$ and some $r>1$. It is easy to see that under these conditions, the function $\Sb_{\eps}$, defined in \eqref{Sepsdef}, fulfills
	\begin{align}
		\Sb_{\eps}&\in\Cl^1(\Sym;\Sym),\label{SepsC}\\
		\partial_{ij}(\Sb_{\eps})_{kl}&=\partial_{kl}(\Sb_{\eps})_{ij}\quad\text{for all }i,j,k,l=1,2,3,\label{epsSpot}\\
		0&\leq(\Sb_{\eps}(A)-\Sb_{\eps}(B))\cdot(A-B),\label{epsSmon}\\
		|\Sb_{\eps}(A)|&\leq C(|A|+|A|^{r-1}+\eps|A|^3+\eps|A|^{q-1}),\label{epsSup}\\
		\Sb_{\eps}(A)\cdot A&\geq\sigma_2|A|^2+\sigma_r|A|^r+\eps\sigma_4|A|^4+\eps\sigma_q|A|^q, \quad\sigma_4,\sigma_q>0\label{epsSlow}
	\end{align}
	for all $A,B\in\Sym$ and some $q>1$. Note that thanks to \eqref{Spot}, there holds
	\begin{align}
		\f{\partial}{\partial A}\int_0^1\Sb_{\eps}(\lambda A)\cdot A\dd{\lambda}&\overset{\eqref{Spot}}{=}\int_0^1\Big(\lambda\f{\partial\Sb_{\eps}(\lambda A)}{\partial A}A+\Sb_{\eps}(\lambda A)\Big)\dd{\lambda}\nonumber\\
		&=\int_0^1\f{\dd{}}{\dd{\lambda}}(\lambda\Sb_{\eps}(\lambda A))\dd{\lambda}=\Sb_{\eps}(A).\label{pot}
	\end{align}
	Similarly, we require that $\s$ satisfies the properties:
	\begin{align}
		\s&\in\Cl^1(\R^3;\R^3),\label{sC}\\
		\partial_i\s_j&=\partial_j\s_i\quad\text{for all }i,j=1,2,3,\\
		0&\leq(\s(\uu)-\s(\we))\cdot(\uu-\we),\\
		|\s(\uu)|&\leq C(|\uu|+|\uu|^{r-1}),\\
		\s(\uu)\cdot\uu&\geq\rho_2|\uu|^2+\rho_r|\uu|^r,\quad\rho_2,\rho_r>0,\label{slow}
	\end{align}
	and, consequently, the function $\s_{\eps}$ defined in \eqref{sepsdef} fulfills
	\begin{align}
		\s_{\eps}&\in\Cl^1(\R^3;\R^3),\label{sepsC}\\
		\partial_i(\s_{\eps})_j&=\partial_j(\s_{\eps})_i\quad\text{for all }i,j=1,2,3,\label{sepspot}\\
		0&\leq(\s_{\eps}(\uu)-\s_{\eps}(\we))\cdot(\uu-\we),\\
		|\s_{\eps}(\uu)|&\leq C(|\uu|+|\uu|^{r-1}+\eps|\uu|^3+\eps|\uu|^{q-1}),\label{epssup}\\
		\s_{\eps}(\uu)\cdot\uu&\geq\rho_2|\uu|^2+\rho_r|\uu|^r+\eps\rho_4|\uu|^4+\eps\rho_q|\uu|^q,\quad\rho_4,\rho_q>0\label{sepslast}
	\end{align}
	and there holds
	\begin{equation}\label{pott}
		\f{\partial}{\partial\uu}\int_0^1\s_{\eps}(\lambda\uu)\cdot\uu\dd{\lambda}=\s_{\eps}(\uu)
	\end{equation}
	for all $\uu\in\R^3$. The assumptions \eqref{epsSpot} and \eqref{sepspot} could be omitted, however then, the identities \eqref{pot} and \eqref{pott}, characterizing the dissipation potentials, become more complicated, see \cite[Corollary.]{Edelen}. The above assumptions could be further generalized in many ways (e.g., by allowing anisotropic constitutive relations). We do not aim at maximum generality here and stick with the simple setting. The parameters $r$ and $q$ retain the same meaning throughout the whole paper and they will be eventually required to satisfy certain bounds. For a~future use, let us collect the above assumptions into the hypothesis ($\rm H_S$):
	\begin{equation}\label{HS}\tag{$\rm H_S$}
		\text{The functions }\Sb,\Sb_{\eps},\s,\s_{\eps}\text{ satisfy }\eqref{SC}\text{--}\eqref{Slow},\eqref{Sepsdef}\text{ and }\eqref{sC}\text{--}\eqref{slow},\eqref{sepsdef}.
	\end{equation}
	
	\subsection{The precise description of the domain $\Omega$}
	
	The domain $\Omega$ is an open bounded set in $\R^3$ (or $\R^2)$. Moreover, the domain $\Omega$ is assumed to be of class $\Cl^{0,1}$, i.e., Lipschitz (see \cite[Sect.~5.5.6]{John} or \cite[p.~49]{Necas} for definition), and nothing more. This allows us to consider domains with very sharp or obtuse corners, which may arise, for instance, if the outlets are cut under very sharp angles. This makes our results widely applicable, even to very rough domains. Of course, this also leads to very poor information about the pressure in the Navier-Stokes equations (since one cannot even apply the $L^p$-regularity for elliptic systems). Since our main result is of qualitative nature (identification of boundary conditions), this lack of information has limited effect in the analysis below.
	
	Next, we observe that, if the flux rates through the boundaries $\Gd$, $\Gf^1$ and $\Gf^2$ are prescribed, one can use the incompressibility constraint $\di\ve=0$ in order to infer that also the net flux rate through $\Gf^0$ is determined. Indeed we have the formula
	\begin{equation*}
		\int_{\Gf^0}\ve\cdot\n=-\int_{\Gd}\ve\cdot\n-\sum_{i=1}^n\int_{\Gf^i}\ve\cdot\n.
	\end{equation*}
	Hence, we assume throughout that the sets $\Omega$, $\Gd$, $\Gn$, $\Gf^i$, $i=0,\ldots,n$, and $\Gf$ are chosen in such a~way that
	\begin{equation}\left.
		\begin{aligned}
			&\Omega\subset\R^3\quad\text{is a~Lipschitz domain,}\\
			&\text{the sets}\quad\Gd,\Gn,\Gf\subset\partial\Omega\quad\text{are open and disjoint,}\qquad\\
			&\overline{\Gd\cup\Gn\cup\Gf}=\partial\Omega,\\
			&\text{each of the sets}\quad\Gf^i\quad\text{is connected,}\\
			&\text{the sets}\quad\overline{\Gf^i},\quad\text{are disjoint.}
		\end{aligned}\right\}\label{Hgeo}\tag{$\rm H_{\Omega}$}
	\end{equation}
	Moreover, we let
	\begin{equation*}
		\Gf\coloneqq\bigcup_{i=0}^n\Gf^i.
	\end{equation*}
	Consequently, the connected components of the set $\Gf$ are separated by (subsets of) $\Gd\cup\Gn$. In particular, if $|\Gd|=|\Gn|=0$, then $\partial\Omega=\Gf=\Gf^0$. Note that one could, in principle, allow existence of some neighboring outlets, but this would be physically counter-intuitive and present an unnecessary complication in the following.
	
	\subsection{Function spaces}	
	
	We \UUU classically \EEE denote the Lebesgue and Sobolev spaces by $(L^p(\Omega;\R^d),\norm{\cdot}_p)$ and $(W^{1,p}(\Omega;\R^d),\norm{\cdot}_{1,p})$, $1\leq p\leq\infty$, $d\in\N$, respectively. Next, let us define
	\begin{align*}
		\Cl^{\infty}_{\partial}&\coloneqq\left\{\uu\in\Cl^{\infty}(\overline{\Omega};\R^3):\uu\vert_{\Gd}=0,\;\uu\vert_{\Gn}\cdot\n=0,\;\int_{\Gf^i}\uu\cdot\n=0,\;i=0,\ldots,n\right\},\\
		\Cl^{\infty}_{\partial,\di}&\coloneqq\{\uu\in\Cl^{\infty}_{\partial}:\di\uu=0\}
	\end{align*}
	and then, for any $1<p<\infty$, we define $p'\coloneqq\f{p}{p-1}$ and
	\begin{align*}
		V^{1,p}&\coloneqq\overline{\Cl^{\infty}_{\partial}}{}^{\norm{\cdot}_{W^{1,p}(\Omega;\R^3)}},& V^{1,p}_{\di}&\coloneqq\overline{\Cl^{\infty}_{\partial,\di}}{}^{\norm{\cdot}_{W^{1,p}(\Omega;\R^3)}},\\
		V^{-1,p'}&\coloneqq (V^{1,p})',& V^{-1,p'}_{\di}&\coloneqq(V^{1,p}_{\di})'.
	\end{align*}
	The spaces $V^{1,p}$ and $V^{1,p}_{\di}$ are equipped with the norm
	\begin{equation*}
		\norm{\we}_{1,p}\coloneqq\norm{\we}_{W^{1,p}(\Omega;\R^3)}=\Big(\ii(|\nn\we|^p+|\we|^p)\Big)^{\f1p}.
	\end{equation*}
	Note that any $\we\in V^{1,p}$ satisfies $\int_{\partial\Omega}\we\cdot\n=0$ (and its boundary values can be thus extended to a~divergence-free vector field). In what follows, we often rely on the Korn-Poincar\'e inequality on $V^{1,p}$ in the form
	\begin{equation}\label{poin}
		\ii(|\nn\we|^p+|\we|^p)\leq c_p\Big(\ii|\D\we|^p+\inn|\we|^p\Big)\quad\text{for all }\we\in V^{1,p},
	\end{equation}
	which in our situation holds if $|\Gd|+|\Gn|>0$, a~physically reasonable assumption. Indeed, if $|\Gd|>0$, then it is a~standard result that \eqref{poin} holds even without the boundary term on the right-hand side. Further, if $|\Gd|=0$ and $|\Gn|>0$, then \eqref{poin} follows by a~slight adaptation of the argument from \cite[Lemma~1.11]{Indiana} (replacing the space $L^2(\partial\Omega)^d$ with $L^p(\Gn;\R^3)$ therein). Inequality \eqref{poin} cannot hold in the singular case $|\Gd|=|\Gn|=0$, where $\partial\Omega=\Gf^0$ and then it is obvious that unbounded constant vector fields violate \eqref{poin}. Note that due to \eqref{poin} and the trace theorem, the expression $\norm{\D\we}_p+\norm{\we}_{p;\Gn}$ is an equivalent norm on $V^{1,p}$.
	
	We shall also need the following special spaces of Lions-Magenes type defined on a~an open connected subset $G$ of $\partial\Omega$. First, let us denote
	\begin{align*}
		W^{1,\infty}_0(G;\R^3)&\coloneqq\{\we\in W^{1,\infty}(G;\R^3):\we=0\text{ on }\partial G\},\\
		W^{1,\infty}_{0,\n}(G;\R^3)&\coloneqq\{\we\in W^{1,\infty}_0(G;\R^3):\we\cdot\n=0\text{ on }G\}
	\end{align*}
	and then, we put
	\begin{align}
		W^{\f1{p'},p}_{0}(G;\R^3)&\coloneqq\overline{W^{1,\infty}_0(G;\R^3)^{\norm{\cdot}_{W^{1/p',p}(G;\R^3)}}},\label{LM0}\\ W^{\f1{p'},p}_{0,\n}(G;\R^3)&\coloneqq\overline{W^{1,\infty}_{0,\n}(G;\R^3)^{\norm{\cdot}_{W^{1/p',p}(G;\R^3)}}}.\label{LM0n}
	\end{align}
	The boundary conditions will be identified in the corresponding dual spaces.
	
	Next, we need to introduce some spaces for time dependent functions. We set $Q_T\coloneqq(0,T)\times\Omega$ for all $T>0$, including $T=\infty$. The Bochner spaces are denoted as $(L^p(0,T;X);\norm{\cdot}_{L^p(0,T;X)})$, where $X$ is a~Banach space. If $X=L^s(\Omega;\R^d)$, or $X=W^{1,s}(\Omega;\R^d)$, $1\leq s\leq \infty$, we make an abbreviation $\norm{\cdot}_{L^p(0,T;L^s)}\coloneqq\norm{\cdot}_{L^p(0,T;L^s(\Omega;\R^d))}$, or $\norm{\cdot}_{L^p(0,T;W^{1,s})}\coloneqq\norm{\cdot}_{L^p(0,T;W^{1,s}(\Omega;\R^d))}$, respectively. Further, we define
	\begin{align*}
		\X^p\coloneqq L^p(0,\infty;V^{1,p})\quad\text{and}\quad
		\X^p_{\di}\coloneqq L^p(0,\infty;V^{1,p}_{\di}).
	\end{align*}
	Next, we define the space of admissible trajectories starting from the zero initial datum as
	\begin{equation*}
		\Ul_0\coloneqq\{\uu\in L^q_{\rm loc}(0,\infty;V^{1,q}_{\di}):\partial_t\uu\in L^2_{\rm loc}(0,\infty;L^2(\Omega;\R^3)),\;\uu(0)=0\}
	\end{equation*}
	and, we denote the corresponding spaces of test functions as
	\begin{align*}
		\Vl_0&\coloneqq\{\fit\in\Ul_0:I_1(\fit)<\infty\},\\
		\Vl_c&\coloneqq\{\pit\in \Vl_0:t\mapsto\pit(t,\cdot)\text{ has a~compact support in }(0,\infty)\}.
	\end{align*}	
	
	\subsection{Nonhomogeneous data}
	
	In the problem under consideration, we wish to prescribe a~nonhomogeneous initial datum $\uu_0:\Omega\to\R^3$ and boundary data $\ve_{\rm D}:(0,\infty)\times\Gd\to\R^3$, $F_i:(0,\infty)\to\R$, $i=0,\ldots,n$. However, due to the nonlinearity of the problem, it seems difficult to formulate some explicit necessary conditions on this data that are needed for the existence of the corresponding global-in-time weak solution of Navier-Stokes equations, especially if the data are allowed to be time-dependent. We refer to \cite{Amann}, \cite{Fursikov}, \cite[IX.4]{galdi}, or \cite{Raymond} where this topic is (partially) treated using different approaches. Basically, one is asking whether it is possible to extend the boundary data $\ve_{\rm D}$, $F_i$ to a~divergence free field $\ve_0$ of such regularity that the products $(\nn\ve)\ve_0$ and $(\nn\ve_0)\ve$ are under control. Obviously, this depends on many factors and there seems to be no agreement on how this extension should be constructed. To avoid this difficult question and, at the same time, to make our results applicable in real scenarios with (large) inflow and outflows, we assume that the data $\uu_0$, $\ve_{\rm D}$, $F_i$ and $\fe$ are admissible in the sense that for any $\eps>0$ there exists a~function $\ve_0^{\eps}:(0,\infty)\times\Omega\to\R^3$ with the following properties:
	\begin{equation}\label{H0A}\tag{$\rm H_0^A$}\left.\begin{aligned}
			&\di\ve^{\eps}_0=0,\quad\ve^{\eps}_0\vert_{\Gd}=\ve_{\rm D},\quad\ve^{\eps}_0\vert_{\Gn}\cdot\n=0,\quad\int_{\Gf^i}\ve^{\eps}_0\cdot\n=F_i,\\
			&\lim_{\eps\to0_+}\norm{\ve_0^{\eps}(0)-\uu_0}_2\to0,\\
			&\norm{\ve_0^{\eps}}_{\X^2\cap \X^r}+\norm{\eps^{\f14}\ve_0^{\eps}}_{\X^4}+\norm{\eps^{\f1q}\ve_0^{\eps}}_{\X^q}+\norm{\eps^{\f12}\partial_t\ve_0^{\eps}}_{L^2(0,\infty;L^2)}\leq C
		\end{aligned}\right\}\end{equation}
	and eventually also
	\begin{equation}\label{H0B}\tag{$\rm H_0^B$}
		\norm{\partial_t\ve_0^{\eps}}_{(\X^2_{\di}\cap \X^r_{\di})'}+\norm{\ve_0^{\eps}}_{L^{\infty}(0,\infty;L^2)\cap L^{\f r{r-2}}(0,\infty;L^{\f r{r-2}})}\leq C.
	\end{equation}
	
	Admissible data indeed exist, at least in some simple scenarios. An obvious case is when $\ve_{\rm D}$ and $F_i$ are time independent and $\uu_0\in V^{1,q}_{\di}$. Then, we can simply choose $\ve_0^{\eps}(t)\coloneqq\uu_0$ for all $t>0$ and $\eps>0$.
	
	It is easy to see that if $\ve_0^{\eps}$ exist, then there also exists a~function $\ve_0\in \X^2_{\di}\cap \X^r_{\di}\cap L^{\infty}(0,\infty;L^2(\Omega;\R^3))$ with $\partial_t\ve_0\in(\X^2_{\di}\cap \X^r_{\di})'$ and satisfying the same constraints as $\ve_0^{\eps}$, to which a~subsequence of $\{\ve_{\eps}\}_{\eps>0}$ converges weakly in the corresponding spaces (and there also holds $\ve(0)=\uu_0$, $\di\uu_0=0$).
	
	For the external body force density, let us assume (for simplicity) that
	\begin{equation}\label{Hf}\tag{$\rm H_{\fe}$}
		\fe\in L^2(0,\infty;L^2(\Omega;\R^3)).
	\end{equation}

	\section{\UUU Main result: variational resolution of
          Navier-Stokes equations with outflow \EEE boundary
          conditions}\label{sec:Tex}

        \UUU We are now in the position of stating the main result of
        the paper, which makes the oulined variational approach
        rigorous. \EEE
	
	\begin{theorem}\label{Tex}
		Let the hypotheses \eqref{Hgeo}, \eqref{HS}, \eqref{H0A} and \eqref{Hf} be fulfilled. Then, for every $\eps>0$, the functional $I_{\eps}$ attains a~minimum $\ve_{\eps}$ in the set $\Ul^{\eps}\coloneqq\Ul_0+\ve_0^{\eps}$ and the function $\ve_{\eps}$ satisfies
		\begin{align}\label{EL2}
			\int_0^{\infty}\ii\doc{\ve}_{\eps}&\cdot(\pit+\eps\partial_t\pit+\eps\rot\ve_{\eps}\times\pit+\eps\rot\pit\times\ve_{\eps})\nonumber\\
			&+\int_0^{\infty}\Big(\ii\Sb_{\eps}(\Dv_{\eps})\cdot\D\pit+\inn\s_{\eps}(\ve_{\eps})\cdot\pit\Big)=\int_0^{\infty}\ii\fe\cdot\pit
		\end{align}
		for all $\pit\in\Vl_c$.
		
		If, in addition, the conditions \eqref{H0B} and
		\begin{equation}\label{bou}
			\f{11}5\leq r<4,\qquad 4<q\leq3r',\qquad\min(\sigma_4,\rho_4)>\f{c_4}4,
		\end{equation}
		hold, then there exists a~function $\ve$ with the properties
		\begin{align}
			\ve-\ve_0&\in \X^2_{\di}\cap \X^r_{\di}\cap L^{\infty}(0,\infty;L^2(\Omega;\R^3)),\label{NSpro1}\\
			\partial_t\ve&\in(\X^2_{\di}\cap \X^r_{\di})',\label{NSpro2}\\
			\ve(0)&=\uu_0,\label{NSpro3}
		\end{align}
		which is a~limit of a~not relabeled subsequence of $\{\ve_{\eps}\}_{\eps>0}$ in the sense that
		\begin{align}
			\ve_{\eps}&\wc\ve&&\text{weakly in }\X^2\cap\X^r,\label{cc1}\\
			\ve_{\eps}&\wc\ve&&\text{weakly}\ast\text{ in }L^{\infty}(0,\infty;L^2(\Omega;\R^3)),\label{cc2}\\
			\ve_{\eps}&\to\ve&&\text{a.e.\ in }(0,\infty)\times\Omega\text{ and on }(0,\infty)\times\Gn,\label{cc4}\\
			\partial_t\ve_{\eps}&\wc\partial_t\ve&&\text{weakly in }(\X^2_{\di}\cap \X^q_{\di})',\label{cc7}
		\end{align}
		and which solves
		\begin{equation}\label{NSdiv}
			\int_0^{\infty}\!\!\Big(\scal{\partial_t\ve,\fit}+\int_{\Omega}(\rot\ve\times\ve)\cdot\fit+\ii\Sb(\Dv)\cdot\D\fit+\inn\s(\ve)\cdot\fit\Big)=\int_0^{\infty}\!\!\ii\fe\cdot\fit
		\end{equation}
		for all $\fit\in \X^2_{\di}\cap \X^r_{\di}$.
		
		Furthermore, let $D\in L^{\infty}_{\rm loc}(0,\infty;\R)$. Then, there exists a~function
		\begin{equation}\label{Ppr}
			P\in L^{\infty}_{\rm loc}(0,\infty;L^{r'}(\Omega;\R))\quad\text{with}\quad\ii P=D
		\end{equation}
		and such that
		\begin{align}
			&\int_0^{\infty}\Big(-\ii\ve\cdot\partial_t\pit+\ii(\nn\ve)\ve\cdot\pit+\ii\Sb(\Dv)\cdot\D\pit+\inn\!\!\!\s(\ve)\cdot\pit\Big)\nonumber\\
			&\quad+\int_0^{\infty}\Big(\ii P\di\partial_t\pit-\f12\int_{\Gf}\!\!\!|\ve|^2\n\cdot\pit\Big)
			=\int_0^{\infty}\!\!\ii\fe\cdot\pit.\label{P}
		\end{align}
		for all $\pit\in\Cl^{\infty}_c((0,\infty);V^{1,r})$.
		
		Finally, choose any $\etb_i\in W^{1,\infty}_0(\Gf^i;\R^3)$ satisfying $\int_{\Gf^i}\etb_i\cdot\n=1$, $i=0,\ldots,n$ and, for any $\psi\in\Cl^1_c((0,\infty);\R)$, let us define
		\begin{align}
			\T_{\psi}&\coloneqq\int_0^{\infty}(P\I\partial_t\psi+\Sb(\Dv)\psi)\nonumber\\
			(c_i)_{\psi}&\coloneqq\scal{\T(\psi)\n,\etb_i}_{\Gf^i}-\f12\int_{\Gf^i}\int_0^{\infty}|\ve|^2\n\cdot\etb_i\psi.\label{cs2}
		\end{align}
		Then, the functions $\ve$ and $P$ satisfy the boundary conditions
		\begin{align}
			\ve\vert_{\Gd}&=\ve_{\rm D},&&&\label{bc1}\\
			\ve\vert_{\Gn}\cdot\n&=0,&\T_{\psi}\n+\int_0^{\infty}\s(\ve)\psi&=0&\text{in }(W^{\f1{r'},r}_{0,\n}(\Gn;\R^3))',\label{bc3}\\
			\int_{\Gf^i}\ve\cdot\n&=F^i,&\T_{\psi}\n-\f12\int_0^{\infty}|\ve|^2\n\psi&=(c_i)_{\psi}\n&\text{in }(W_0^{\f1{r'},r}(\Gf^i;\R^3))',\label{bc2}
		\end{align}
		for every $i=0,\ldots,n$ and for all $\psi\in\Cl^1_c((0,\infty);\R)$.
	\end{theorem}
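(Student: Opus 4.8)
The strategy is a three-stage argument: (i) produce the minimizer $\ve_\eps$ and its Euler--Lagrange equation \eqref{EL2} by the direct method, (ii) derive $\eps$-uniform a priori estimates and pass to the limit $\eps\to0_+$ to obtain a Leray--Hopf solution in the divergence-free formulation \eqref{NSdiv}, and (iii) \emph{recover the pressure and the outflow constants}, thereby upgrading \eqref{NSdiv} to \eqref{P} and reading off the boundary conditions \eqref{bc1}--\eqref{bc2}. For step (i), I would check that $I_\eps$ is coercive on $\Ul^\eps$ (using \eqref{epsSlow}, \eqref{sepslast}, the Korn--Poincar\'e inequality \eqref{poin}, the cancellation \eqref{canc} to absorb the indefinite $-\fe\cdot\ve$ term, and \eqref{Hf}, \eqref{H0A}) and weakly lower semicontinuous (convexity of the dissipation potentials via \eqref{Spot}, \eqref{sepspot} and \eqref{epsSmon}; the inertial term $\tfrac\eps2|\doc\ve|^2$ is convex in $\partial_t\ve$ and the $\rot\ve\times\ve$ part is handled by compactness coming from the $\partial_t\ve\in L^2_{\rm loc}(L^2)$ bound). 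Computing the Gateaux derivative in directions $\pit\in\Vl_c$ and integrating by parts in time and space, using \eqref{pot}, \eqref{pott}, yields \eqref{EL2}; the compact time support of $\pit$ avoids both the initial and the (absent) terminal boundary terms.

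For step (ii), I would test \eqref{EL2} with $\pit$ built from $\ve_\eps-\ve_0^\eps$ (localized in time) and exploit \eqref{canc} to get the basic energy inequality: this gives, uniformly in $\eps$, bounds on $\ve_\eps$ in $\X^2\cap\X^r\cap L^\infty(L^2)$, on $\eps^{1/4}\ve_\eps$ in $\X^4$, on $\eps^{1/q}\ve_\eps$ in $\X^q$, and — crucially — on $\sqrt\eps\,\doc{\ve}_\eps$ in $L^2(L^2)$; the constraint $\min(\sigma_4,\rho_4)>c_4/4$ in \eqref{bou} is what lets the $\eps\sigma_4|\Dv_\eps|^4$ and $\eps\rho_4|\ve_\eps|^4$ dissipation beat the bad terms $\eps\doc{\ve}_\eps\times\rot\ve_\eps$ that arise when one tests the equation. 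From $\sqrt\eps\,\doc{\ve}_\eps\in L^2(L^2)$ together with the structure $\doc{\ve}=\partial_t\ve+\rot\ve\times\ve$, the bounds on $\rot\ve_\eps\times\ve_\eps$ (an interpolation using $r\ge11/5$ in three dimensions, exactly the Leray--Hopf threshold for the convective term to lie in a dual space), and an Aubin--Lions argument, one extracts the convergences \eqref{cc1}--\eqref{cc7}, in particular the a.e.\ convergence \eqref{cc4} on $\Gn$ needed to pass to the limit in the monotone nonlinearities $\Sb_\eps(\Dv_\eps)\to\Sb(\Dv)$ and $\s_\eps(\ve_\eps)\to\s(\ve)$ (Minty's trick, using \eqref{epsSup}, \eqref{epssup} to kill the $\eps$-corrections). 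The terms $\eps\partial_t\doc{\ve}_\eps$, $\eps\rot\ve_\eps\times\doc{\ve}_\eps$, $\eps\rot(\ve_\eps\times\doc{\ve}_\eps)$ vanish in the limit because each carries at least one factor $\eps$ against an at-most-$O(\eps^{-1/2})$ quantity, typically using a further $\sqrt\eps$ from the $\X^4$ or $\X^q$ bound on the companion factor. This produces \eqref{NSdiv}; \eqref{NSpro1}--\eqref{NSpro3} follow from the uniform bounds and weak(-$\ast$) lower semicontinuity, and $\ve(0)=\uu_0$ from the initial-trace convergence together with $\ve_0^\eps(0)\to\uu_0$.

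For step (iii), the pressure is reconstructed from \eqref{NSdiv} by a De Rham / Ne\v cas-type argument: the functional obtained by removing the divergence constraint annihilates all $\fit\in\X^2_{\di}\cap\X^r_{\di}$ with, in addition, $\fit\cdot\n=0$ on all of $\partial\Omega$, hence equals (distributionally in time) $\nabla$ of something; on a Lipschitz domain the relevant $L^{r'}$ pressure estimate is only the negative-norm Ne\v cas inequality, which is why $P$ lives in $L^\infty_{\rm loc}(L^{r'})$ and the equation \eqref{P} is written with $\ii P\,\di\partial_t\pit$ — i.e.\ $P$ is paired against $\di\partial_t\pit$, sidestepping the lack of $\nabla P\in L^1$. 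The normalization $\ii P=D$ fixes the spatial constant. Having \eqref{P}, I would test against $\pit=\psi(t)\fit$ with $\fit$ having prescribed nonzero flux through exactly one $\Gf^i$ (the fields $\etb_i$): the difference between \eqref{P} and the already-known \eqref{NSdiv}-type identities forces the boundary pairing $\T_\psi\n - \tfrac12\int_0^\infty|\ve|^2\n\psi$ on $\Gf^i$ to be a multiple of $\n$, and the multiplier is computed by pairing with $\etb_i$, giving \eqref{cs2} and \eqref{bc2}; taking instead $\fit$ supported near $\Gn$ with $\fit\cdot\n=0$ but free tangential part yields the Navier-slip relation \eqref{bc3}; \eqref{bc1} and $\ve\vert_{\Gn}\cdot\n=0$ are inherited from membership in $\X^2_{\di}$ plus the affine shift $\ve_0$. \textbf{The main obstacle} I anticipate is step (ii)'s handling of the $\eps$-inertial cross terms in tandem with the exact integrability budget: showing that $\eps\rot(\ve_\eps\times\doc{\ve}_\eps)\to0$ and that the convective term converges — both sit right at the $r=11/5$, $4<q\le 3r'$ edge of \eqref{bou} — requires a careful interpolation between $\X^r$, $L^\infty(L^2)$, and the $\eps$-weighted $\X^4$/$\X^q$ spaces, with no slack; getting the a.e.\ boundary convergence \eqref{cc4} on $\Gn$ (needed for the nonlinear friction term) from only a trace bound, without elliptic regularity on the Lipschitz domain, is the second delicate point and presumably relies on compactness of the trace operator $V^{1,r}\embl L^r(\Gn)$ combined with the $\partial_t$ bound.
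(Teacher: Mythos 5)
Your overall route coincides with the paper's: direct method plus Euler--Lagrange for \eqref{EL2}, testing with $\ve_\eps-\ve_0^\eps$ and the cancellation \eqref{canc} for the energy estimate (with $\min(\sigma_4,\rho_4)>c_4/4$ absorbing the quartic term $\tfrac\eps4|\rot\ve_\eps\times\ve_\eps|^2$), Minty's trick for $\Sb$ using that $r\ge\tfrac{11}{5}$ makes $\ve$ itself admissible as a test function, an inf-sup/Bogovskii pressure reconstruction on the Lipschitz domain applied to the time-integrated equation, and identification of the boundary conditions through the auxiliary flux fields $\etb_i$.

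There is, however, one genuine gap in step (ii). You claim that the convergences \eqref{cc1}--\eqref{cc7} — in particular the uniform bound on $\partial_t\ve_\eps$ in $(\X^2_{\di}\cap\X^q_{\di})'$ behind \eqref{cc7} and the Aubin--Lions compactness behind \eqref{cc4} — follow from ``$\sqrt\eps\,\doc{\ve}_\eps\in L^2(L^2)$ together with the structure $\doc{\ve}=\partial_t\ve+\rot\ve\times\ve$'' and the dual bounds on the convective term. They do not: the weighted estimate only gives $\|\doc{\ve}_\eps\|_{L^2(L^2)}=O(\eps^{-1/2})$, which blows up, and the Euler--Lagrange equation cannot be used naively because it contains the second-order term $-\eps\partial_t\doc{\ve}_\eps$, which is not small in any space you control at this stage. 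The paper's resolution — and the step your plan is missing — is to first write the Euler--Lagrange equation in the pointwise-in-time form \eqref{EL3}, check that each of the operators $N^1_\eps, N^2_\eps, A^1_\eps, A^2_\eps, A$ is bounded in a suitable dual of $\X^2\cap\X^q$ uniformly in $\eps$ (this is where the exponents in \eqref{ineq} and the $\eps$-weighted $\X^4$, $\X^q$ bounds enter), and then invert the elliptic-in-time operator $(\mathrm{id}-\eps\partial_t)$ by representing $\doc{\ve}_\eps=K*(\cdots)$ with the kernel $K(t)=\eps^{-1}e^{t/\eps}\chi_{t\le0}$, whose $L^1$-norm is $1$ independently of $\eps$; this transfers the uniform dual bounds to $\doc{\ve}_\eps$ and hence to $\partial_t\ve_\eps$. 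Without this causality-inversion device the compactness argument, the identification of $\z=\s(\ve)$ via strong trace convergence, and the Minty argument all stall. A second, smaller omission: passing from \eqref{NSdiv} (written with $\rot\ve\times\ve$) to \eqref{P} (written with $(\nn\ve)\ve$, the term $\ii P\di\partial_t\pit$, and the surface term $-\tfrac12\int_{\Gf}|\ve|^2\n\cdot\pit$) requires absorbing $\nn(\tfrac12|\ve|^2)$ into the time-integrated pressure, i.e.\ adding the correction $K(t,x)=\int_0^t\tfrac12|\ve(s,x)|^2\,\mathrm{d}s\in L^\infty_{\rm loc}(0,\infty;L^3)$ to the De Rham pressure $Q$ and integrating by parts; this is precisely where the $\Gf$-boundary term in \eqref{P}, and ultimately the $-\tfrac12|\ve|^2\n$ corrector in \eqref{bc2}, comes from, and your plan does not account for it.
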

\UUU Theorem~\ref{Tex} is proved in Section \ref{sec:proof} below. We
devote the rest of this section to several comments instead:
\EEE

	(i) Since the setting of Theorem~\ref{Tex} enables to test \eqref{NSdiv} by $\ve-\ve_0$, it is clear that we are dealing with a weak solution of the Leray-Hopf type. The total energy of the fluid can however be increased by the non-homogeneous and possibly unsteady inflow, therefore the standard form of the energy (in)equality is unavailable. This can be also seen in the proof of the uniform estimate below, where the physical energy of the (approximated) solution is estimated by the size of the data.
		
	(ii) If, for any reason, the solution $(\ve,P)$ turns out to be sufficiently smooth up to the boundary $\Gn\cup\Gf$, then the boundary conditions \eqref{bc1}--\eqref{bc2}, \eqref{cs2} simplify to \eqref{ac0}--\eqref{ac2}, \eqref{cs}, i.e., they hold in the classical sense.
	
	(iii) Note that if the test function $\fit$ from \eqref{NSdiv} vanishes on the whole $\partial\Omega$, then
	\begin{equation}
		\ii(\rot\ve\times\ve)\cdot\fit=\ii(\nn\ve)\ve\cdot\fit-\f12\ii\di(|\ve|^2\fit)=\ii(\nn\ve)\ve\cdot\fit,\label{gra}
	\end{equation}
	which is the standard form of the convective term in the Navier-Stokes equations. If $\fit$ does not vanish on $\partial\Omega$, the term $-(\nn\ve)^T\ve\cdot\fit$ contributes both to a~pressure and to a~boundary term as can be seen in \eqref{P}.
	
	(iv) In assumption \eqref{bou}, the parameters $q$, $\sigma_4$, and $\rho_4$ correspond just to the stabilization of $I_{\eps}$ and their values have no impact on the properties of $(\ve,P)$.
	
	(v) The upper bounds $r<4$ and $q\leq 3r'$ are easily removable. The inequality $q\leq 3r'$ just simplifies some dual spaces that are needed below. The case $r\geq4$ is sub-critical and the analysis becomes simpler as the stabilization of $I_{\eps}$ by the fourth order dissipation terms is unnecessary, i.e., one can set $\sigma_4=\rho_4=0$. Furthermore, if $r>4$, we put $\sigma_4=\rho_4=\sigma_q=\rho_q=0$ and the parameter $q$ is omitted completely.
	
	(vi) The lower bound $r\geq\f{11}5$ is probably not optimal. However, in the case $r\in(\f65,\f{11}5)\setminus\{2\}$, the identification of the weak limit of $\Sb(\Dv_{\eps})$ becomes difficult. It is not obvious whether the standard methods (such as the $L^{\infty}$- or $W^{1,\infty}$- truncations) can be applied directly for our $\eps$-approximation scheme, which is rather complicated. Also, in the case $r<2$, there is another difficulty of extending the boundary data without some kind of smallness condition on $\ve_{\rm D}\cdot\n$ and $F_i$, cf.\ e.g.\ \cite{Stara} and \cite{Lanz2009}.
	
	(vii) Theorem~\ref{Tex} is valid, of course, for the classical Navier-Stokes system, where $r=2$ and $\Sb,\s$ are linear functions. However, the spaces for $\partial_t\ve$ and for $\fit$ in \eqref{NSdiv} have to be modified in a~standard way (it is no longer possible to consider $\ve-\ve_0$ as a~test function in \eqref{NSdiv}). Also, it is apparent in the proof below (cf.~\eqref{rotp}) that if $r=2$, one has to replace the assumption $\norm{\ve_0^{\eps}}_{L^{\infty}(0,\infty;L^{\infty})}\leq C$ (recall \eqref{H0B}) by the requirement that the number
	\begin{equation*}
		\esssup_{t>0}\sup_{\substack{0\neq\fit\in W^{1,2}(\Omega;\R^3)\\\di\fit=0}}\ii\ve_0^{\eps}(t)\cdot\f{\rot\fit\times\fit}{\norm{\fit}_{1,2}^2}
	\end{equation*}	
	can be made smaller than any given $\delta>0$. That $\ve_0^{\eps}$ can indeed be constructed in such a~way follows from \cite[(IX.4.43)]{galdi} and it is closely related to the so-called \textit{extension condition} of Leray and Hopf.
	
	(viii) With usual modifications, the above theorem can be proved also in the $d$-dimensional, $d\geq2$, setting.
	
	(iv) Starting with the definition of $I_{\eps}$, it is possible to replace everywhere the symmetric velocity gradient $\Dv$ by the full velocity gradient $\nn\ve$. This changes very little in the analysis below. Then, the analogue of Theorem~\ref{Tex} yields the outflow boundary condition
	\begin{equation*}
		-\pr\n-\tfrac12|\ve|^2\n+\Sb(\nn\ve)\n=c_i\n\quad\text{on }\Gf^i
	\end{equation*}
	(again, if the solution is sufficiently smooth). This is a~nonlinear version of the do-nothing boundary condition with the dynamic pressure correction. One could even replace $\Dv$ by $\nn\ve+a(\nn\ve)^T$ for any $a\neq-1$ and again, since $\di(\nn\ve)^T=\nn\di\ve=0$, this would not change anything except the resulting boundary conditions.
	
	(x) If $r>3$, then $(\nn\ve)\ve\cdot\ve$ becomes easily controllable and hence, we do not need to use \eqref{canc}. Thus, by replacing $\doc{\ve}$ with $\dot{\ve}$ in the definition of $I_{\eps}$, it is possible to prove a~version of Theorem~\ref{Tex}, where the convective terms take the usual form $(\nn\ve)\ve$ and the condition \eqref{bc2} becomes just (a weak version of)
	\begin{equation*}
		-\pr\n-\Sb(\nn\ve)\n=c_i\n\quad\text{on }\Gf^i.
	\end{equation*}
	By choosing $\Sb(A)=\nu A+\delta|A|^{\delta+1}A$, $\delta>0$, and making a~convenient shift in the pressure, this yields 
	\begin{equation}\label{don2}
		-\pr\n+\nu(\nn\ve)\n+\delta|\nn\ve|^{\delta+1}(\nn\ve)\n=0\quad\text{on }\Gf^0.
	\end{equation}
	Making $\delta>0$ very small, this is as close as we can get to the classical do-nothing boundary condition \eqref{don}, while retaining the global-in-time and large-data existence of a~weak solution of the Navier-Stokes equations corresponding to $\Sb$.
	
	(xi) By subtracting the term $\int_0^{\infty}e^{-\f t{\eps}}\int_{\Gf}\gb\cdot\ve$ in the definition of $I_{\eps}$, we can include also a~forcing through the outflow boundaries, that may arise as a~reaction force to the fluid flow outside $\Omega$. Then, if $\gb\in L^2(0,\infty;L^2(\Gf;\R^3))$, this additional term can be handled as the one with $\fe$, thanks to the fact that we are able to control the whole Sobolev norms of the solution via \eqref{poin}. (In fact, one could be slightly more general here and observe that $\fe,\gb\in L^2L^2+L^rL^r$ is sufficient to obtain Theorem~\ref{Tex}.) This way, it is possible to derive a~nonhomogeneous version of the boundary condition \eqref{bc2}. On the other hand, it seems hard to imagine any kind of physical device that would yield exactly some given forcing $\gb$, apart from the case where $\gb$ arises as a~boundary value of some potential, in which case it can be included in the pressure. Thus, to ensure a~viable physical interpretation, one should probably stick with the choice $\gb=0$, which is what we do in this work.
	
	(xii) Adding the term $\int_0^{\infty}e^{-\f t{\eps}}\int_{\Gn}\f{\eps}2|\partial_t\ve|^2$ in the definition of $I_{\eps}$ leads (at least formally) to the so-called \textit{dynamic slip} boundary conditions, which are studied in \cite{HATZIKIRIAKOS} for certain polymer melts. Applying the similar idea also on the outflow boundary $\Gf$, one can recover the conditions involving $\partial_t\ve$, that are studied analytically in \cite{Bothe} and which are quite popular in numerics, see, e.g., \cite{Li} and references therein. For simplicity of presentation (it would require major changes in the definitions of the function spaces below), we do not discuss these extensions in detail here.
	
	(xiii) The distribution $\pr\coloneqq\partial_t P$ from the
        above theorem is called \textit{pressure}. Note that the
        boundary conditions are identified only in an averaged sense
        over time. This is to avoid the fact that $\di\T$, where
        $\T\coloneqq-\pr\I+\Sb(\Dv)$, is not a~function, in general,
        and hence the trace of $\T\n$ cannot be defined
        consistently. Actually, this issue arises already for the
        non-stationary Stokes system with, e.g., the Navier-slip
        boundary condition if the domain or the initial data are rough
        and no additional regularity of solution, besides the energy
        estimate, is available. On the other hand, one can read from
        \eqref{P} that $\di\T_{\psi}$ is an integrable function,
        hereby providing a~possibility to define the object
        $\T_{\psi}\n$ in a~way which is compatible with the common
        meaning of a~trace (a limit of restrictions of smooth
        functions) and with the corresponding integration-by-parts
        formula, that is essential for the identification of
        \eqref{bc3} and \eqref{bc2}. For a~different resolution of
        this issue, see \cite{BoyerFabrie}. It is clear that if
        $\T,\di\T\in L^1(Q_{\infty};\R^3)$, then $(\cdot)_{\psi}$
        averaging is unnecessary and conditions \eqref{bc3},
        \eqref{bc2} hold a.e.\ in $(0,\infty)$. The functions $\etb_i$
        always exist (see the end of the proof of Theorem~\ref{Tex}
        below) and their only purpose is to get around the fact that
        the functional $\T_{\psi}\n$ cannot be applied to $\n$, for
        a~general Lipschitz domain.

        \section{\UUU Proof of the main result}\label{sec:proof}

        \UUU The \EEE proof of Theorem~\ref{Tex} \UUU is developed
        throughout the section and and is divided in subsequent
        sections and structured \EEE on a~series of lemmas. In what follows, the number $\omega\in(0,\f12)$ is systematically used as an auxiliary parameter (arising, e.g., from the Young inequality), which is eventually chosen sufficiently small.
	
	\subsection{Existence of minima}
	
	Let us consider the problem of minimizing $I_{\eps}$ over the space $\Ul^{\eps}$. The following lemma tells us that this problem is solvable.
	
	\begin{lemma}\label{Lex}
		Suppose that \eqref{Hgeo}, \eqref{HS}, \eqref{H0A}, and \eqref{Hf} are satisfied. Then, the functional $I_{\eps}$ attains a~minimum $\ve_{\eps}$ in $\Ul^{\eps}$. Moreover, there exists a~constant $C>0$, depending only on the data and $\Omega$, such that
		\begin{equation}\label{mins}
			|I_{\eps}(\ve_{\eps})|\leq C\quad\text{for all}\quad\eps>0.
		\end{equation}
	\end{lemma}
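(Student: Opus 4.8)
\textbf{Proof plan for Lemma~\ref{Lex}.}
The natural route is the direct method of the calculus of variations. First I would establish coercivity of $I_\eps$ on $\Ul^\eps$: writing $\ve=\uu+\ve_0^\eps$ with $\uu\in\Ul_0$, I would expand the quadratic inertial term $\frac\eps2|\doc\ve|^2$ and the dissipation potentials, use the lower bounds \eqref{epsSlow} and \eqref{sepslast} to dominate $\int\Sb_\eps(\Dv)\cdot\Dv$ and $\int\s_\eps(\ve)\cdot\ve$ from below by $\sigma_2\|\D\ve\|_2^2+\sigma_r\|\D\ve\|_r^r+\eps\sigma_4\|\D\ve\|_4^4+\eps\sigma_q\|\D\ve\|_q^q$ (plus the corresponding boundary terms on $\Gn$), and then invoke the Korn-Poincar\'e inequality \eqref{poin} to convert these into control of the full norms in $\X^2\cap\X^r$ and their $\eps$-weighted counterparts. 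The key point, exactly as in the cited identity \eqref{canc}, is that the material-derivative-type term satisfies $\doc\ve\cdot\ve=\partial_t(\tfrac12|\ve|^2)$, which is what keeps the $\doc\ve$ term nonnegative after integration against the weight $e^{-t/\eps}$ once one accounts for the boundary behaviour; the cross terms with $\ve_0^\eps$ and with $\fe$ are absorbed by Young's inequality using \eqref{H0A} and \eqref{Hf}. This gives $I_\eps(\ve)\ge \alpha\|\ve-\ve_0^\eps\|_{\text{energy}}^2-C$ for some $\alpha>0$, hence minimizing sequences are bounded in the relevant reflexive spaces.

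Next I would extract a weakly convergent (sub)sequence $\ve^{(k)}\rightharpoonup\ve_\eps$ in $L^2_{\rm loc}(0,\infty;V^{1,q}_\di)$ with $\partial_t\ve^{(k)}\rightharpoonup\partial_t\ve_\eps$ in $L^2_{\rm loc}(0,\infty;L^2)$; an Aubin-Lions argument gives strong $L^2_{\rm loc}$ convergence of $\ve^{(k)}$ (and a.e.\ convergence after a further subsequence), which is needed to pass to the limit in the \emph{nonlinear, nonconvex} term $\rot\ve\times\ve$ hidden inside $\doc\ve$. The constraints $\di\ve_\eps=0$, $\ve_\eps|_{\Gd}=\ve_{\rm D}$, $\ve_\eps|_{\Gn}\cdot\n=0$, $\int_{\Gf^i}\ve_\eps\cdot\n=F_i$ and the initial condition pass to the limit because they are closed under weak convergence in these spaces (the flux and trace constraints are continuous linear conditions, the initial trace is controlled by the $\partial_t$-bound). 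For lower semicontinuity: the dissipation integrands $\int_0^1\Sb_\eps(\lambda\Dv)\cdot\Dv\,d\lambda$ and $\int_0^1\s_\eps(\lambda\ve)\cdot\ve\,d\lambda$ are, by \eqref{pot}--\eqref{pott} and the monotonicity/symmetry assumptions, convex potentials (their derivatives $\Sb_\eps,\s_\eps$ are monotone gradients), so the corresponding integral functionals are weakly lower semicontinuous; the $-\fe\cdot\ve$ term is weakly continuous. The genuinely delicate term is $\int_0^\infty e^{-t/\eps}\frac\eps2\|\doc\ve\|_2^2$: I would rewrite it, exactly as in \cite{OSU}, using $\doc\ve=\partial_t\ve+\rot\ve\times\ve$ and the algebraic identity $(\rot\ve\times\ve)\cdot\ve=0$, so that $\|\doc\ve\|_2^2=\|\partial_t\ve\|_2^2+2\int\partial_t\ve\cdot(\rot\ve\times\ve)+\|\rot\ve\times\ve\|_2^2$; the first term is weakly l.s.c.\ (convex), the third is weakly l.s.c.\ once we have strong $L^2_{\rm loc}$ convergence of $\ve^{(k)}$ together with the weak convergence of $\rot\ve^{(k)}$, and the cross term converges because $\partial_t\ve^{(k)}\rightharpoonup\partial_t\ve_\eps$ weakly in $L^2$ while $\rot\ve^{(k)}\times\ve^{(k)}\to\rot\ve_\eps\times\ve_\eps$ strongly in $L^2_{\rm loc}$ (product of weak-$\times$-strong, using the $L^q$, $q>1$, bound on $\rot\ve^{(k)}$ to upgrade integrability). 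Assembling these, $I_\eps$ is weakly sequentially l.s.c.\ along the minimizing sequence, and $\ve_\eps$ is a minimizer.

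Finally, the uniform bound \eqref{mins}: the upper bound $I_\eps(\ve_\eps)\le I_\eps(\ve_0^\eps)$ (admissible competitor) together with \eqref{H0A}--\eqref{Hf} gives, after the same Young-inequality estimates, $I_\eps(\ve_\eps)\le C$ with $C$ independent of $\eps$ — here one uses that the $\eps$-weighted norms of $\ve_0^\eps$ in $\X^4$, $\X^q$ and $\eps^{1/2}\partial_t\ve_0^\eps$ in $L^2L^2$ are all bounded by hypothesis, so that $\int_0^\infty e^{-t/\eps}\eps|\doc{\ve_0^\eps}|^2$-type quantities stay $O(1)$; the lower bound $I_\eps(\ve_\eps)\ge -C$ is precisely the coercivity estimate from the first step evaluated at the minimizer. \textbf{The main obstacle} is the weak lower semicontinuity of the $\eps|\doc\ve|^2$ term, since $\doc\ve$ contains the nonconvex quadratic quantity $\rot\ve\times\ve$; this is handled only because of the cancellation \eqref{canc} and because the stabilizing $\eps$-terms in $\Sb_\eps,\s_\eps$ buy enough compactness ($L^q_{\rm loc}$ control of $\nabla\ve$, $q>1$) to make the weak-strong product argument work — exactly the reason the WIDE functional is stabilized the way it is.
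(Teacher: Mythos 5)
Your overall strategy --- direct method, coercivity from the lower bounds on $\Sb_\eps,\s_\eps$ combined with the Korn--Poincar\'e inequality \eqref{poin}, Aubin--Lions compactness for the minimizing sequence, convexity of the dissipation potentials for lower semicontinuity, and the competitor $\ve_0^\eps$ for the uniform upper bound --- is exactly the paper's. However, your treatment of the delicate term $\int_0^\infty e^{-t/\eps}\tfrac\eps2\|\doc{\ve}\|_2^2$ contains a genuine error. You expand the square and claim that $\rot\ve_k\times\ve_k\to\rot\ve_\eps\times\ve_\eps$ strongly in $L^2_{\rm loc}$. This is false in general: writing $\rot\ve_k\times\ve_k-\rot\ve_\eps\times\ve_\eps=\rot\ve_k\times(\ve_k-\ve_\eps)+(\rot\ve_k-\rot\ve_\eps)\times\ve_\eps$, the first summand does vanish strongly (bounded factor times strongly convergent factor, using interpolation of the strong $L^2L^2$ convergence with the $\eps$-stabilized $\X^q$ bound), but the second converges only weakly, since Aubin--Lions yields no strong convergence of $\nn\ve_k$. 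Consequently the cross term $\int\partial_t\ve_k\cdot(\rot\ve_k\times\ve_k)$ is a product of two merely weakly convergent $L^2$ sequences and does not pass to the limit, so your decomposition of $\|\doc{\ve}_k\|_2^2$ does not close.

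The fix --- and the paper's actual argument --- is not to expand the square at all: one has $\partial_t\ve_k\wc\partial_t\ve_\eps$ and $\rot\ve_k\times\ve_k\wc\rot\ve_\eps\times\ve_\eps$ weakly in $L^2(0,T;L^2)$ (the latter limit identified through the strong $L^2$ convergence of $\ve_k$ and the weak convergence of $\rot\ve_k$, the product being bounded in $L^2$ by \eqref{rotw}), hence $\doc{\ve}_k\wc\doc{\ve}_\eps$ weakly as a sum, and the weighted squared $L^2$ norm of $\doc{\ve}$ is weakly lower semicontinuous as a convex functional. A further small point: the cancellation \eqref{canc} plays no role in this lemma (it enters only later, in the uniform estimates of Lemma~\ref{Luni}); what controls the inertial term here is simply its nonnegativity, together with the estimate \eqref{docest} used for coercivity, where the negative quartic terms are absorbed by the $\eps$-stabilized dissipation. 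The remainder of your plan (constraints closed under weak convergence, upper bound via $I_\eps(\ve_\eps)\le I_\eps(\ve_0^\eps)$ and \eqref{H0A}, lower bound from coercivity) matches the paper's proof.
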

	\begin{proof}
		Let us start by deriving some preliminary estimates that are used below for estimation of certain terms of $I_{\eps}$.
		
		Let $\we\in\Ul^{\eps}$. We use \eqref{poin} to estimate
		\begin{equation*}
			\ii|\rot\we\times\we|^2\leq\ii|\rot\we|^2|\we|^2\leq 2\ii|\nn\we|^2|\we|^2\leq\ii(|\nn\we|^4+|\we|^4),
		\end{equation*}
		thus
		\begin{equation}\label{rotw0}
			\norm{\rot\we\times\we}_2\leq\norm{\we}_{1,4}^2.
		\end{equation}
		As a~consequence of the convexity of the power function $z\mapsto z^{\alpha}$, $1<\alpha<\infty$, there holds
		\begin{equation}\label{youngus}
			(x+y)^{\alpha}\leq(1+\omega)x^{\alpha}+\f{1+\omega}{((1+\omega)^{\alpha'-1}-1)^{\alpha-1}}y^\alpha\quad\text{for all }x,y\geq0.
		\end{equation}
		Hence, using also \eqref{poin}, we can continue with the estimate \eqref{rotw0} to get
		\begin{align}
			\ii|\rot\we&\times\we|^2\leq(1+\omega)\norm{\we-\ve_0^{\eps}}_{1,4}^4+C(\omega)\norm{\ve_0^{\eps}}_{1,4}^4\label{rotw}\nonumber\\
			&\leq(1+\omega)c_4\Big(\ii|\D(\we-\ve_0^{\eps})|^4+\inn|\we-\ve_0^{\eps}|^4\Big)+C(\omega)\norm{\ve_0^{\eps}}_{1,4}^4\nonumber\\
			&\leq(1+\omega)^2c_4\Big(\ii|\D\we|^4+\inn|\we|^4\Big)+C(\omega)\norm{\ve_0^{\eps}}_{1,4}^4.
		\end{align}
		From this, Young's inequality and $\omega<\f12$, we also deduce that
		\begin{align}
			\ii|\doc{\we}|^2&\geq\ii(\omega|\partial_t\we|^2-\f{\omega}{1-\omega}|\rot\we\times\we|^2)\nonumber\\
			&\geq\omega\ii|\partial_t\we|^2-5\omega c_4\ii|\D\we|^4-5\omega c_4\inn|\we|^4-C(\omega)\norm{\ve_0^{\eps}}_{1,4}^4\label{docest}
		\end{align}
		Next, we estimate the term with $\fe$ using Young's inequality, \eqref{youngus}, and \eqref{poin} as
		\begin{align}
			\Big|&\ii\fe\cdot\we\Big|\leq\omega\norm{\we}_2^2+C(\omega)\norm{\fe}_2^2\nonumber\\
			&\leq\omega(1+\omega)\norm{\we-\ve_0^{\eps}}^2_2+C(\omega)(\norm{\ve_0^{\eps}}_2^2+\norm{\fe}_2^2)\nonumber\\
			&\leq\omega(1+\omega)c_2\Big(\ii|\D(\we-\ve_0^{\eps})|^2+\inn|\we-\ve_0^{\eps}|^2\Big)+C(\omega)(\norm{\ve_0^{\eps}}_2^2+\norm{\fe}_2^2)\nonumber\\
			&\leq\omega(1+\omega)^2c_2\Big(\ii|\D\we|^2+\inn|\we|^2\Big)+C(\omega)(\norm{\ve_0^{\eps}}_{1,2}^2+\norm{\fe}_2^2).\label{fest}
		\end{align}
		Further, by \eqref{epsSup}, we have
		\begin{align}
			\int_0^1\Sb_{\eps}(\lambda\D\we)\cdot\D\we\dd{\lambda}&\leq C\int_0^1\lambda|\D\we|^2+\lambda^{r-1}|\D\we|^r+\eps\lambda^3|\D\we|^4+\eps\lambda^{q-1}|\D\we|^q\dd{\lambda}\nonumber\\
			&\leq C(|\D\we|^2+|\D\we|^r+\eps|\D\we|^4+\eps|\D\we|^q)\label{Slaup}
		\end{align}
		and analogously, by \eqref{epssup}, also
		\begin{equation}\label{slaup}
			\int_0^1\s_{\eps}(\lambda\we)\cdot\we\dd{\lambda}\leq C(|\we|^2+|\we|^r+\eps|\we|^4+\eps|\we|^q)
		\end{equation}
		On the other hand, we use \eqref{epsSmon} and \eqref{epsSlow} to obtain the following estimate from below:
		\begin{align}
			\int_0^1\Sb_{\eps}(\lambda\D\we)\cdot\D\we\dd{\lambda}&\overset{\eqref{epsSlow}}{\geq}\int_{\f12}^1\Sb_{\eps}(\lambda\D\we)\cdot\D\we\dd{\lambda}\nonumber\\
			&\overset{\eqref{epsSmon}}{\geq}\Sb_{\eps}(\tfrac12\D\we)\cdot(\tfrac12\D\we)\nonumber\\
			&\overset{\eqref{epsSlow}}{\geq}2^{-2}\sigma_2|\D\we|^2+2^{-r}\sigma_r|\D\we|^r\nonumber\\
			&\qquad+\eps2^{-4}\sigma_4|\D\we|^4+\eps2^{-q}\sigma_q|\D\we|^q.\label{Seps}
		\end{align}
		Analogously, we also get
		\begin{equation}\label{Seps2}
			\int_0^1\s_{\eps}(\lambda\we)\cdot\we\dd{\lambda}\geq2^{-2}\rho_2|\we|^2+2^{-r}\rho_r|\we|^r+\eps2^{-4}\rho_4|\we|^4+\eps2^{-q}\rho_q|\we|^q.
		\end{equation}
		
		Now we proceed with the proof of existence of a~minimum. Since $0\in\Ul_0$, we have $\ve_0^{\eps}\in \Ul^{\eps}$ and the set $\Ul^{\eps}$ is thus nonempty. As $\we\in \Ul^{\eps}$ implies $\partial_t\we\in L^2_{\rm loc}(0,\infty;L^2(\Omega;\R^3))$, $\we\in L^r_{\rm loc}(0,\infty;W^{1,r}(\Omega;\R^3))$, and $\we\in L^r_{\rm loc}(0,\infty;L^r(\Gn;\R^3))$, $1\leq r\leq q$, we see, using \eqref{rotw0}, \eqref{Slaup}, and \eqref{slaup} that the integral
		\begin{align*}
			I_{\eps}^T(\we)&\coloneqq\int_0^Te^{-\f{t}{\eps}}\Big(\f{\eps}2\ii|\doc{\we}|^2-\ii\fe\cdot\we\nonumber\\
			&\qquad+\ii\int_0^1\Sb_{\eps}(\lambda\D\we)\cdot\D\we\dd{\lambda}+\inn\int_0^1\s_{\eps}(\lambda\we)\cdot\we\dd{\lambda}\Big)
		\end{align*}
		is well defined and finite for every $T>0$. Moreover, estimates \eqref{rotw0}, \eqref{Slaup}, and \eqref{slaup} together with assumption \eqref{H0A} imply
		\begin{align}
			I_{\eps}^T(\ve_0^{\eps})&\leq C\int_0^T\Big(\ii\big(\eps|\partial_t\ve_0^{\eps}|^2+\eps|\ve_0^{\eps}|^4+\eps|\nn\ve_0^{\eps}|^4+|\fe|^2+|\ve_0^{\eps}|^2\big)\nonumber\\
			&\qquad+\ii\big(|\Dv_0^{\eps}|^2+|\Dv_0^{\eps}|^r+\eps|\Dv_0^{\eps}|^4+\eps|\Dv_0^{\eps}|^q\big)\nonumber\\
			&\qquad+\inn\big(|\ve_0^{\eps}|^2+|\ve_0^{\eps}|^r+\eps|\ve_0^{\eps}|^4+\eps|\ve_0^{\eps}|^q\big)\Big)\leq C,\label{upest}
		\end{align}
		for all $T>0$, hence $\inf_{\Ul^{\eps}}I_{\eps}\leq I_{\eps}(\ve_0^{\eps})<\infty$. On the other hand, using Young's inequality, \eqref{fest}, and \eqref{H0A}, we get
		\begin{align}\label{Icoer}
			I_{\eps}^T(\we)&\geq C\int_0^Te^{-\frac{t}{\eps}}\Big(\ii|\D\we|^2+\inn|\we|^2-\ii\fe\cdot\we\big)\Big)\geq-C
		\end{align}
		for all $T>0$, therefore $I_{\eps}(\we)\geq -C$ for every $\we\in\Ul^{\eps}$. This and \eqref{upest} proves \eqref{mins} once we show that a~minimum is attained.
		
		Let $\{\ve_k\}_{k=1}^\infty\subset \Ul^{\eps}$ be a~minimizing sequence for $I_{\eps}$, satisfying also $I_{\eps}(\ve_k)<C$ for all $k\in\N$. By using \eqref{docest}, \eqref{fest}, \eqref{Seps}, \eqref{Seps2} and then choosing $\omega>0$ sufficiently small, we get
		\begin{align*}
			I_{\eps}(\ve_k)&\geq C\int_0^{\infty}e^{-\f{t}{\eps}}\Big(\eps\omega\ii|\partial_t\ve_k|^2-\eps\omega\ii|\D\ve_k|^4-\eps\omega\inn|\ve_k|^4-\eps C(\omega)\norm{\ve_0^{\eps}}_{1,4}^4\big)\\
			&\quad-\omega\ii|\D\ve_k|^2-\omega\inn|\ve_k|^2-C(\omega)(\norm{\ve_0^{\eps}}_{1,2}^2+\norm{\fe}_2^2)\\
			&\quad+\ii(|\D\ve_k|^2+|\D\ve_k|^r+\eps|\Dv_k|^4+\eps|\D\ve_k|^q)\\
			&\quad+\inn(|\ve_k|^2+|\ve_k|^r+\eps|\ve_k|^4+\eps|\ve_k|^q)\Big)\\
			&\geq C\Big(\eps\norm{e^{-\f{t}{2\eps}}\partial_t\ve_k}_{L^2(0,\infty;L^2)}^2+\norm{e^{-\f{t}{2\eps}}\ve_k}_{L^2(0,\infty;V^{1,2})}^2+\norm{e^{-\f{t}{r\eps}}\ve_k}_{L^r(0,\infty;V^{1,r})}^r\\
			&\quad+\eps\norm{e^{-\f{t}{4\eps}}\ve_k}_{L^4(0,\infty;V^{1,4})}^4+\eps\norm{e^{-\f{t}{q\eps}}\ve_k}_{L^q(0,\infty;V^{1,q})}^q\Big)-C.
		\end{align*}
		Using this estimate on $(0,T)$, $T>0$, we can apply the Aubin-Lions lemma with the compact embedding $W^{1,2}(\Omega;\R^3)\embl L^2(\Omega;\R^3)$ to deduce that a~nonrelabeled subsequence of $\ve_k$ converges to $\ve_{\eps}\in \Ul^{\eps}$ strongly in $L^2(0,T;L^2(\Omega;\R^3))$ and $\nn\ve_k$ converges weakly to $\nn\ve_{\eps}$. Therefore, the product $\rot\ve_k\times\ve_k$, which is bounded in $L^2(0,T;L^2(\Omega;\R^3))$ (recall \eqref{rotw}), converges weakly (up to another subsequence) to the correct limit $\rot\ve_{\eps}\times\ve_{\eps}$. Hence, by the weak lower semi-continuity of norms, we obtain
		\begin{align*}
			I_{\eps}^T(\ve_{\eps})+\int_0^Te^{-\f{t}{\eps}}\ii\fe\cdot\ve_{\eps}&\leq\liminf_{k\to\infty}\Big(I_{\eps}(\ve_k)+\int_0^{\infty}e^{-\f t{\eps}}\ii\fe\cdot\ve_k\Big)\\
			&=\inf_{\Ul^{\eps}}I_{\eps}+\int_0^{\infty}e^{-\f{t}{\eps}}\ii\fe\cdot\ve_{\eps}
		\end{align*}
		and, taking the limit $T\to\infty$, we see that $\ve_{\eps}$ is a~minimum of $I_{\eps}$.
              \end{proof}

              \subsection{\UUU Euler-Lagrange equations}
	
	Next, we derive the Euler-Lagrange equation of $I_{\eps}$, which we state in three different ways: \eqref{EL1} is useful for deriving most of the $\eps$-uniform estimates, \eqref{EL2} is convenient for the weak limit identification, while \eqref{EL3} is important to get an $\eps$-uniform estimate of $\partial_t\ve_{\eps}$.
	
	\begin{lemma}\label{LEL}
		Assume that \eqref{Hgeo}, \eqref{HS}, \eqref{H0A}, and \eqref{Hf} hold and let $\ve_{\eps}$ be a~minimum of $I_{\eps}$ in $\Ul^{\eps}$. Then
		\begin{align}\label{EL1}
			\int_0^{\infty}e^{-\f{t}{\eps}}\Big(\ii\eps\doc{\ve}_{\eps}&\cdot(\partial_t\fit+\rot\ve_{\eps}\times\fit+\rot\fit\times\ve_{\eps})-\ii\fe\cdot\fit\nonumber\\
			&+\ii\Sb_{\eps}(\Dv_{\eps})\cdot\D\fit+\inn\s_{\eps}(\ve_{\eps})\cdot\fit\Big)=0
		\end{align}
		for all $\fit\in \Vl_0$. Moreover, we have \eqref{EL2} for all $\pit\in \Vl_c$. Furthermore, if
		\begin{equation}\label{ineq}
			2\leq r\leq 4\qquad\text{and}\qquad4\leq q\leq 3r',
		\end{equation}
		then the relation
		\begin{equation}\label{EL3}
			-\eps\partial_t\doc{\ve}_{\eps}+\doc{\ve}_{\eps}+\eps^{\f12-\f1q}N^1_{\eps}\ve_{\eps}+\eps^{1-\f3q}N^2_{\eps}\ve_{\eps}+\eps^{\f14}A^1_{\eps}\ve_{\eps}+\eps^{\f1q}A^2_{\eps}\ve_{\eps}+A\ve_{\eps}=\fe
		\end{equation}
		holds as an identity in the space $L^1_{\rm loc}(0,\infty;V^{-1,q'}_{\di})$, where we define
		\begin{align*}
			\scal{N^1_{\eps}\ve_{\eps},\fit}&\coloneqq\eps^{\f12+\f1q}\ii\partial_t\ve_{\eps}\cdot(\rot\ve_{\eps}\times\fit+\rot\fit\times\ve_{\eps}),\\
			\scal{N^2_{\eps}\ve_{\eps},\fit}&\coloneqq\eps^{\f3q}\ii(\rot\ve_{\eps}\times\ve_{\eps})\cdot(\rot\ve_{\eps}\times\fit+\rot\fit\times\ve_{\eps}),\\
			\scal{A^1_{\eps}\ve_{\eps},\fit}&\coloneqq\eps^{\f34}\sigma_4\ii|\Dv_{\eps}|^2\Dv_{\eps}\cdot\D\fit+\eps^{\f34}\rho_4\inn|\ve_{\eps}|^2\ve_{\eps}\cdot\fit,\\
			\scal{A^2_{\eps}\ve_{\eps},\fit}&\coloneqq\eps^{\f1{q'}}\sigma_q\ii|\Dv_{\eps}|^{q-2}\Dv_{\eps}\cdot\D\fit+\eps^{\f1{q'}}\rho_q\inn|\ve_{\eps}|^{q-2}\ve_{\eps}\cdot\fit,\\
			\scal{A\ve_{\eps},\fit}&\coloneqq\ii\Sb(\Dv_{\eps})\cdot\D\fit+\inn\s(\ve_{\eps})\cdot\fit,
		\end{align*}
		for all $\fit\in V^{1,q}$.
	\end{lemma}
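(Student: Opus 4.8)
\emph{Proof plan.}
I would obtain \eqref{EL1} as the vanishing first variation of $I_{\eps}$ at its minimizer $\ve_{\eps}$, and then read off \eqref{EL2} and \eqref{EL3} from \eqref{EL1}.

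\textbf{For \eqref{EL1}.} Fix $\fit\in\Vl_0$ and set $\ve_h\coloneqq\ve_{\eps}+h\fit$, $h\in\R$. Since $\Vl_0\subset\Ul_0$ and $\Ul^{\eps}=\Ul_0+\ve_0^{\eps}$ is affine, $\ve_h\in\Ul^{\eps}$ for every $h$, hence $h\mapsto I_{\eps}(\ve_h)$ has a minimum at $h=0$; it thus suffices to show this scalar function is differentiable there with derivative equal to the left-hand side of \eqref{EL1}. The inertial term can be differentiated because $\doc{(\cdot)}$ from \eqref{dotu} is affine-plus-quadratic, $\doc{\ve}_h=\doc{\ve}_{\eps}+h(\partial_t\fit+\rot\ve_{\eps}\times\fit+\rot\fit\times\ve_{\eps})+h^2\,\rot\fit\times\fit$; the $\fe$-term is linear; and the two dissipation integrands differentiate via the potential identities \eqref{pot}, \eqref{pott}, producing $\Sb_{\eps}(\Dv_{\eps})\cdot\D\fit$ and $\s_{\eps}(\ve_{\eps})\cdot\fit$. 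Passing the derivative under the integral will be justified by dominated convergence, with majorants built from the growth bounds \eqref{epsSup}, \eqref{epssup}, the elementary estimate \eqref{rotw0}, and the summability encoded in $\fit\in\Vl_0$ (i.e.\ $I_1(\fit)<\infty$) together with $I_{\eps}(\ve_{\eps})<\infty$. Using $\pm\fit$ then yields \eqref{EL1}.

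\textbf{For \eqref{EL2}.} Given $\pit\in\Vl_c$, the function $e^{t/\eps}\pit$ still belongs to $\Vl_c\subset\Vl_0$ (it has the same, compact, support in time), hence is admissible in \eqref{EL1}. Substituting $\fit=e^{t/\eps}\pit$, using $e^{-t/\eps}\partial_t\big(e^{t/\eps}\pit\big)=\tfrac1\eps\pit+\partial_t\pit$ and noting that all the remaining exponential factors cancel, \eqref{EL1} becomes --- after multiplying through by $\eps$ and rearranging --- precisely \eqref{EL2}.

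\textbf{For \eqref{EL3}.} Assume \eqref{ineq}. Combining \eqref{mins} with the coercivity estimates established in the proof of Lemma~\ref{Lex}, on every interval $(0,T)$ one gets $\partial_t\ve_{\eps}\in L^2(0,T;L^2)$ and $\ve_{\eps}\in L^2(0,T;V^{1,2})\cap L^r(0,T;V^{1,r})\cap L^4(0,T;V^{1,4})\cap L^q(0,T;V^{1,q})$, and in particular $\doc{\ve}_{\eps}\in L^2(0,T;L^2)$ via \eqref{rotw0}. Since $q\geq4>3$, one may use $V^{1,q}\embl L^{\infty}(\Omega)$; then Hölder's inequality, the trace theorem, the growth bound \eqref{Sup} for $A$, and the explicit forms \eqref{Sepsdef}, \eqref{sepsdef} for $A^1_{\eps},A^2_{\eps}$ will show that each of $N^1_{\eps}\ve_{\eps},N^2_{\eps}\ve_{\eps},A^1_{\eps}\ve_{\eps},A^2_{\eps}\ve_{\eps},A\ve_{\eps}$ lies in $L^1_{\rm loc}(0,\infty;V^{-1,q'}_{\di})$ --- here the bounds $r\geq2$ (so $r'\leq r$, whence $\Sb(\Dv_{\eps})\in L^{r'}$ locally) and $q\leq3r'$ are exactly what matches the Hölder exponents, while $\Dv_{\eps}\in L^r\cap L^4\cap L^q$ locally absorbs the growth of $\Sb$ and of the stabilizing terms. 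I would then take \eqref{EL2} with $\pit\in\Vl_c$, integrate the term $\int_0^{\infty}\ii\eps\doc{\ve}_{\eps}\cdot\partial_t\pit$ by parts in time --- legitimate since $\doc{\ve}_{\eps}\in L^2_{\rm loc}(0,\infty;L^2)$ and $\pit$ has compact support --- producing $-\eps\partial_t\doc{\ve}_{\eps}$ as a distributional time derivative; then expand $\doc{\ve}_{\eps}=\partial_t\ve_{\eps}+\rot\ve_{\eps}\times\ve_{\eps}$ and $\Sb_{\eps},\s_{\eps}$ according to \eqref{Sepsdef}, \eqref{sepsdef}, and group the resulting terms by the power of $\eps$ they carry, so as to recognise exactly $\eps^{1/2-1/q}N^1_{\eps}$, $\eps^{1-3/q}N^2_{\eps}$, $\eps^{1/4}A^1_{\eps}$, $\eps^{1/q}A^2_{\eps}$ and $A$. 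At this point \eqref{EL2} reads $\int_0^{\infty}\scal{\mathcal R_{\eps}(t),\pit(t)}=0$ for all $\pit\in\Vl_c$, where $\mathcal R_{\eps}$ is the left-hand side of \eqref{EL3} minus $\fe$. Testing with $\pit=\psi\fit$, $\psi\in\Cl^{\infty}_c((0,\infty))$ and $\fit$ ranging over a countable dense subset of $V^{1,q}_{\di}$ (note $\psi\fit\in\Vl_c$), the fundamental lemma of the calculus of variations gives $\mathcal R_{\eps}(t)=0$ in $V^{-1,q'}_{\di}$ for a.e.\ $t$; and since every summand of $\mathcal R_{\eps}$ other than $-\eps\partial_t\doc{\ve}_{\eps}$ has already been placed in $L^1_{\rm loc}(0,\infty;V^{-1,q'}_{\di})$, so is that last one, so \eqref{EL3} holds in that space.

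\textbf{Expected main obstacle.} The two delicate points are the dominated-convergence justification of the Gateaux differentiation above, where the difference quotients of the dissipation integrands must be controlled uniformly for $h$ near $0$ by an integrable majorant incorporating the weight $e^{-t/\eps}$ and the several growth exponents of $\Sb_{\eps},\s_{\eps}$; and the exponent bookkeeping in the last step that places $N^i_{\eps}\ve_{\eps}$, $A^i_{\eps}\ve_{\eps}$, $A\ve_{\eps}$ in $V^{-1,q'}_{\di}$ with an $L^1$-in-time bound --- this is precisely where the hypotheses $2\leq r\leq4$ and $4\leq q\leq3r'$ get consumed. The specific fractional powers of $\eps$ attached to $N^i_{\eps}$ and $A^i_{\eps}$ are immaterial for this $L^1_{\rm loc}$ identity (they are tuned for the later uniform-in-$\eps$ estimates) and merely need to be carried along.
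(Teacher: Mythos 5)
Your proposal is correct and follows essentially the same route as the paper: (EL1) via the first variation at the minimizer using the potential identities \eqref{pot}, \eqref{pott} and a passage to the limit in the difference quotient justified by the growth bounds; (EL2) by the substitution $\fit=e^{t/\eps}\pit$ (note no extra factor of $\eps$ is actually needed, the exponentials simply cancel); and (EL3) by the Hölder/embedding bookkeeping under \eqref{ineq} that places each nonlinear functional in $L^1_{\rm loc}(0,\infty;V^{-1,q'}_{\di})$ and then reads off the distributional time derivative of $\doc{\ve}_{\eps}$. The only cosmetic difference is that the paper organizes the exponent check through the chain $2\leq r\leq\f{q}{q-3}\leq4\leq\f{2q}{q-2}\leq 3r'$ rather than invoking $V^{1,q}\embl L^{\infty}(\Omega)$, but both arguments are equivalent.
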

	\begin{proof}
		Let $\fit\in \Vl_0$. Since $\Vl_0\subset\Ul_0$, we have, for all $s\in\R$, that $\ve_{\eps}+s\fit\in \Ul^{\eps}$ and it is easy to see that also $I(\ve_{\eps}+s\fit)<\infty$. Let us define
		\begin{equation*}
			g(s)\coloneqq\int_0^1\Sb_{\eps}(\lambda(\Dv_{\eps}+s\D\fit))\cdot(\Dv_{\eps}+s\D\fit)\dd{\lambda}.
		\end{equation*}
		By using the fundamental theorem of calculus in the form
		\begin{equation*}
			g(s)-g(0)=s\int_0^1g'(us)\dd{u}
		\end{equation*}
		together with \eqref{pot}, we obtain
		\begin{align*}
			g(s)-g(0)&=s\int_0^1\f{\partial}{\partial A}\int_0^1\Sb_{\eps}(\lambda A)\cdot A\dd{\lambda}\Big\vert_{A=\Dv_{\eps}+us\D\fit}\cdot\f{\dd{}}{\dd{\tau}}(\Dv_{\eps}+\tau\D\fit)\Big\vert_{\tau=us}\dd{u}\\
			&\overset{\eqref{pot}}{=}s\int_0^1\Sb_{\eps}(\Dv_{\eps}+us\D\fit)\cdot\D\fit\dd{u}.
		\end{align*}
		An analogous identity holds, of course, for the function $\s_{\eps}$. Hence, also using
		\begin{equation*}
			\f12(|\we_1|^2-|\we_2|^2)=\we_2\cdot(\we_1-\we_2)+\f12|\we_1-\we_2|^2,
		\end{equation*}
		we find that
		\begin{align}
			0&\leq I(\ve_{\eps}+s\fit)-I(\ve_{\eps})\nonumber\\
			&=\int_0^{\infty}e^{-\f{t}{\eps}}\Big(s\ii \eps\doc{\ve}_{\eps}\cdot(\partial_t\fit+\rot\ve_{\eps}\times\fit+\rot\fit\times\ve_{\eps}+s\rot\fit\times\fit)\nonumber\\
			&\quad+\f{s^2}2\ii|\partial_t\fit+\rot\ve_{\eps}\times\fit+\rot\fit\times\ve_{\eps}+s\rot\fit\times\fit|^2-s\ii \fe\cdot\fit\nonumber\\
			&\quad +s\ii\int_0^1\Sb_{\eps}(\Dv_{\eps}+us\D\fit)\cdot\D\fit\dd{u}+s\inn\int_0^1\s_{\eps}(\ve_{\eps}+us\fit)\cdot\fit\dd{u}\Big).\label{gato}
		\end{align}
		Dividing this by $s>0$, using $I_{\eps}(\fit)<\infty$ and the continuity of $\Sb_{\eps}$ and $\s_{\eps}$ to take the limit $s\to0_+$, and then doing the analogous procedure for $s<0$ leads to \eqref{EL1}.
		
		If $\pit\in \Vl_c$, then also $\fit\coloneqq e^{\f{t}{\eps}}\pit\in \Vl_c\subset \Vl_0$ and, since $\partial_t\fit=e^{\f{t}{\eps}}(\partial_t\pit+\eps^{-1}\pit)$, we obtain \eqref{EL2} from \eqref{EL1}.
		
		To prove \eqref{EL3}, note first using H\"older's inequality and the fact that $\ve_{\eps}\in\Ul^{\eps}$ that the functionals $N^1_{\eps}\ve_{\eps}$, $N^2_{\eps}\ve_{\eps}$, $A^1_{\eps}\ve_{\eps}$, $A^2_{\eps}\ve_{\eps}$, and $A\ve_{\eps}$ are well-defined on $V^{1,\f{2q}{q-2}}$, $V^{1,\f{q}{q-3}}$, $V^{1,4}$, $V^{1,q}$ and $V^{1,r}$, respectively, for almost every time,
		where the space $V^{1,q}$ is the smallest one of these. Indeed, this a~consequence of the inequality 
		\begin{equation*}
			2\leq r\leq\f{q}{q-3}\leq4\leq\f{2q}{q-2}\leq 3r',
		\end{equation*}
		that follows \eqref{ineq}. Hence, if we rewrite \eqref{EL2} as
		\begin{align*}
			&\int_0^{\infty}\ii\eps\doc{\ve}_{\eps}\cdot\partial_t\pit+\int_0^{\infty}\langle\doc{\ve}_{\eps}+\eps^{\f12-\f1q}N^1_{\eps}\ve_{\eps}+\eps^{1-\f3q}N^2_{\eps}\ve_{\eps}\\
			&\qquad+\eps^{\f14}A^1_{\eps}\ve_{\eps}+\eps^{\f1q}A^2_{\eps}\ve_{\eps}+A\ve_{\eps}-\fe,\pit\rangle=0,
		\end{align*}
		we deduce that $\partial_t\doc{\ve}_{\eps}\in L^1_{\rm loc}(0,\infty;V^{-1,q'}_{\di})$, leading to \eqref{EL3}.
              \end{proof}

              \subsection{\UUU A priori estimates}
	
	Now we examine the limit of the sequence of minimizers constructed in the previous section. First, we need to derive an $\eps$-uniform estimate for $\ve_{\eps}$, which is the key technical point of the paper.
	
	\begin{lemma}\label{Luni}
		Suppose that \eqref{Hgeo}, \eqref{HS}, \eqref{H0A}, \eqref{H0B}, and \eqref{Hf} hold and let $\{\ve_{\eps}\}_{\eps>0}$ be a~sequence of minimizers to $\{I_{\eps}\}_{\eps>0}$. If $r\geq\f{11}5$ and $\min(\sigma_4,\rho_4)>\f{c_4}4$, then
		\begin{align}
			\norm{\ve_{\eps}}_{\X^2\cap \X^r}+\norm{\eps^{\f14}\ve_{\eps}}_{\X^4}+\norm{\eps^{\f1q}\ve_{\eps}}_{\X^q}+\norm{\ve_{\eps}}_{L^{\infty}(0,\infty;L^2)}&\leq C,\label{odhad1}\\
			\norm{\eps^{\f12}\partial_t\ve_{\eps}}_{L^2(0,\infty;L^2)}+\norm{\eps^{\f12}\rot\ve_{\eps}\times\ve_{\eps}}_{L^2(0,\infty;L^2)}+\norm{\eps^{\f12}\doc{\ve}_{\eps}}_{L^2(0,\infty;L^2)}&\leq C,\label{odhad2}\\
			\norm{\rot\ve_{\eps}\times\ve_{\eps}}_{L^{\f r2}(0,\infty;L^{\f r2})}+\norm{\eps^{\f2q}\rot\ve_{\eps}\times\ve_{\eps}}_{L^{\f q2}(0,\infty;L^{\f q2})}&\leq C,\label{odhad3}\\
			\norm{\partial_t\ve_{\eps}}_{(\X^2_{\di}\cap \X^q_{\di})'}+\norm{\rot\ve_{\eps}\times\ve_{\eps}}_{(\X^r)'}+\norm{\doc{\ve}_{\eps}}_{(\X^2_{\di}\cap \X^q_{\di})'}&\leq C.\label{odhad4}
		\end{align}
	\end{lemma}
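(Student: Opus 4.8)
The plan is to test the Euler--Lagrange identity of Lemma~\ref{LEL} with the direction $\fit=\ve_{\eps}-\ve_0^{\eps}$. This is admissible: $\ve_{\eps}\in\Ul^{\eps}$, $I_{\eps}(\ve_{\eps})<\infty$ by Lemma~\ref{Lex}, and $(\ve_{\eps}-\ve_0^{\eps})(0)=0$, so $\ve_{\eps}-\ve_0^{\eps}\in\Vl_0$. Since the bounds \eqref{odhad1}--\eqref{odhad4} carry no exponential weight, one cannot use \eqref{EL1} directly --- it only reproduces the $e^{-t/\eps}$-weighted estimates already implicit in \eqref{mins} --- so instead one tests \eqref{EL2} with $\pit=\theta\,(\ve_{\eps}-\ve_0^{\eps})$, $\theta\in\Cl^{\infty}_c((0,\infty))$ approximating $\chi_{(0,T)}$, or equivalently tests \eqref{EL3} on the finite window $(0,T)$. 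The central algebraic point is the cancellation \eqref{canc}: writing $\doc{\ve}_{\eps}\cdot(\ve_{\eps}-\ve_0^{\eps})=\partial_t(\tfrac12|\ve_{\eps}|^2)-\doc{\ve}_{\eps}\cdot\ve_0^{\eps}$, the first term integrates to $\tfrac12\norm{\ve_{\eps}(T)}_2^2-\tfrac12\norm{\ve_{\eps}(0)}_2^2$ and yields the $L^{\infty}(0,\infty;L^2)$ bound (the initial term being controlled through \eqref{H0A}). The $\eps$-order part of the inertial term, after substituting $\partial_t\ve_{\eps}=\doc{\ve}_{\eps}-\rot\ve_{\eps}\times\ve_{\eps}$ and expanding, collapses to $\eps|\partial_t\ve_{\eps}|^2$ plus a cross term $\eps\,\doc{\ve}_{\eps}\cdot(\rot\ve_{\eps}\times\ve_{\eps})$ plus contributions affine in $\ve_0^{\eps}$.

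The dissipation is treated via monotonicity \eqref{epsSmon} (and its counterpart for $\s_{\eps}$), coercivity \eqref{epsSlow}, \eqref{sepslast}, and growth \eqref{epsSup}, \eqref{epssup}: splitting $\Dv_{\eps}=\Dv_0^{\eps}+\D(\ve_{\eps}-\ve_0^{\eps})$ and absorbing $\Sb_{\eps}(\Dv_{\eps})\cdot\Dv_0^{\eps}$ by Young's inequality produces, up to a constant depending on the data through \eqref{H0A}, a fixed multiple of $\int_0^T\ii(\sigma_2|\Dv_{\eps}|^2+\sigma_r|\Dv_{\eps}|^r+\eps\sigma_4|\Dv_{\eps}|^4+\eps\sigma_q|\Dv_{\eps}|^q)$ and, likewise, of its boundary analogue for $\s_{\eps}$; the Korn--Poincar\'e inequality \eqref{poin} then upgrades the symmetric-gradient and boundary contributions to full $W^{1,p}$-norms. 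The two delicate terms are the following. The inertial cross term is bounded below by $-\eps\delta|\doc{\ve}_{\eps}|^2-\tfrac{\eps}{4\delta}|\rot\ve_{\eps}\times\ve_{\eps}|^2$, and $\tfrac{\eps}{4\delta}\ii|\rot\ve_{\eps}\times\ve_{\eps}|^2$ is controlled by the pointwise estimate \eqref{rotw}; its absorption into the $\eps\sigma_4$- and $\eps\rho_4$-dissipation is possible \emph{exactly} because $\min(\sigma_4,\rho_4)>\tfrac{c_4}4$ (let $\omega\to0$, $\delta\to1$). The genuinely nonlinear term $\int(\rot\ve_{\eps}\times\ve_{\eps})\cdot\ve_0^{\eps}$ coming from $\doc{\ve}_{\eps}\cdot\ve_0^{\eps}$ is estimated by $|\rot\ve_{\eps}\times\ve_{\eps}|\le|\nn\ve_{\eps}||\ve_{\eps}|$, H\"older, parabolic interpolation (in three dimensions $L^{\infty}(0,\infty;L^2)\cap L^r(0,\infty;V^{1,r})\embl L^{5r/3}(Q_{\infty})$) and the integrability of $\ve_0^{\eps}$ from \eqref{H0A}--\eqref{H0B}; the resulting power of $\norm{\ve_{\eps}}_{\X^r}$ is subcritical --- hence absorbable into the coercive $\sigma_r$-term --- precisely when $r\ge\tfrac{11}5$ (in the borderline case $r=2$ a smallness hypothesis must replace it, compare the comments following Theorem~\ref{Tex}). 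The remaining affine-in-$\ve_0^{\eps}$ terms are absorbed by pairing $\partial_t\ve_0^{\eps}$ against $\ve_{\eps}-\ve_0^{\eps}\in\X^2_{\di}\cap\X^r_{\di}$ via the bound on $\norm{\partial_t\ve_0^{\eps}}_{(\X^2_{\di}\cap\X^r_{\di})'}$ in \eqref{H0B}, while the $\fe$-term uses \eqref{Hf} and \eqref{poin}.

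It then remains to dispose of the terminal quantities at $t=T$, which are present because \eqref{EL3} is second order in time and we work on $(0,\infty)$ without a terminal condition: letting $\theta\uparrow\chi_{(0,T)}$, one telescopes the $\ve_{\eps}$-part of the terminal term by invoking \eqref{canc} once more, and controls the remaining $\ve_0^{\eps}$-part by a bootstrap in $T$ on $\sup_{(0,T)}\norm{\ve_{\eps}}_2^2$ together with the weighted bound \eqref{mins} and the decay $\ve_0^{\eps},\fe\in L^2(0,\infty;L^2)$; then one lets $T\to\infty$. Collecting all contributions yields \eqref{odhad1} and, as a byproduct, the $L^2$-bound on $\eps^{1/2}\partial_t\ve_{\eps}$ in \eqref{odhad2}.

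Finally, the last three estimates follow from \eqref{odhad1}. The bound on $\eps^{1/2}\rot\ve_{\eps}\times\ve_{\eps}$ in \eqref{odhad2} is read off from \eqref{rotw} and the $\X^4$- and boundary-$L^4$-bounds in \eqref{odhad1}, whence $\eps^{1/2}\doc{\ve}_{\eps}=\eps^{1/2}\partial_t\ve_{\eps}+\eps^{1/2}\rot\ve_{\eps}\times\ve_{\eps}$. Estimate \eqref{odhad3} is the $L^{r/2}$- and $\eps$-weighted $L^{q/2}$-version of the same $|\rot\ve_{\eps}\times\ve_{\eps}|\le|\nn\ve_{\eps}||\ve_{\eps}|$ bound combined with \eqref{odhad1}, needing only $W^{1,r}(\Omega)\embl L^r(\Omega)$ (resp.\ $W^{1,q}(\Omega)\embl L^q(\Omega)$) and H\"older in time. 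Estimate \eqref{odhad4} is obtained by moving every term of \eqref{EL3} to the right-hand side and testing against $\fit\in\X^2_{\di}\cap\X^q_{\di}$, bounding each term by the norms already controlled in \eqref{odhad1}--\eqref{odhad3} --- the $\eps$-powers prefixed to $N^1_{\eps},N^2_{\eps},A^1_{\eps},A^2_{\eps}$ in \eqref{EL3} are tuned precisely so that these pairings stay bounded, and the $L^{r'}(0,\infty;V^{-1,r'})$-bound on $\rot\ve_{\eps}\times\ve_{\eps}$ again rests on $r\ge\tfrac{11}5$. I expect the main obstacle to be the interplay between the convective-type term $\int(\rot\ve_{\eps}\times\ve_{\eps})\cdot\ve_0^{\eps}$ (and its dual-space counterpart in \eqref{odhad4}) and the coercive dissipation, which is what pins the threshold $r\ge\tfrac{11}5$; a secondary difficulty is the analogous balance between the $\eps$-order inertial cross term and the quartic stabilization --- forcing $\min(\sigma_4,\rho_4)>\tfrac{c_4}4$ --- together with the terminal-time bookkeeping dictated by the elliptic-in-time structure of \eqref{EL3}.
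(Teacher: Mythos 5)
Your overall strategy for \eqref{odhad1}--\eqref{odhad3} coincides with the paper's: test the Euler--Lagrange identity with a time-localized version of $\ve_{\eps}-\ve_0^{\eps}$, exploit the cancellation \eqref{canc}, absorb the quadratic-in-$\rot\ve_{\eps}\times\ve_{\eps}$ remainder into the quartic stabilization via $\min(\sigma_4,\rho_4)>\f{c_4}4$, and control the data terms through \eqref{H0A}--\eqref{H0B}. Two points in that part deserve correction. First, your cutoff $\theta\uparrow\chi_{(0,T)}$ in \eqref{EL2} is \emph{not} innocuous: the term $\eps\int\doc{\ve}_{\eps}\cdot\partial_t\pit$ then produces a terminal contribution $-\eps\,\ii\doc{\ve}_{\eps}(T)\cdot(\ve_{\eps}-\ve_0^{\eps})(T)$, which is not controlled pointwise in $T$ (only $\eps^{1/2}\doc{\ve}_{\eps}\in L^2$ in a weighted sense is available a priori); your ``telescoping/bootstrap'' is too vague to dispose of it. The paper avoids this entirely by testing \eqref{EL1} with the weight $\eta=(e^{t/\eps}-1)\wedge(e^{T/\eps}-1)$, which is continuous and satisfies $\eps e^{-t/\eps}\eta'=\chi_{(0,T)}$, so the only terminal quantity is the good term $\f12\ii|\ve_{\eps}(T)|^2$ (the price is a separate estimate on $(0,\eps)$ taken directly from \eqref{mins}). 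Second, you misattribute the threshold $r\geq\f{11}5$: the absorption of $\ii(\rot\ve_{\eps}\times\ve_{\eps})\cdot\ve_0^{\eps}$ needs only $r>2$, via the pointwise Young inequality with exponents $\f r2$ and $\f r{r-2}$ together with $\ve_0^{\eps}\in L^{\f r{r-2}}$ from \eqref{H0B} (cf.\ \eqref{rotp}); the bound $r\geq\f{11}5$ enters exclusively in \eqref{odhad4}, where the interpolation of $\rot\ve_{\eps}\times\ve_{\eps}$ between $L^r(0,\infty;L^{\f{2r}{r+2}})$ and $L^{\f r2}(0,\infty;L^{\f{3r}{6-r}})$ lands in $(\X^r)'$ precisely when $\f{6r}{(r-2)(5r-3)}\leq\f{3r}{3-r}$, i.e.\ $r\geq\f{11}5$.

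The genuine gap is in \eqref{odhad4}. You propose to bound $\doc{\ve}_{\eps}$ in $(\X^2_{\di}\cap\X^q_{\di})'$ by ``moving every term of \eqref{EL3} to the right-hand side,'' but \eqref{EL3} contains $-\eps\partial_t\doc{\ve}_{\eps}$, which is \emph{not} controlled by any of \eqref{odhad1}--\eqref{odhad3}; moving it to the right gives only a bound on $(1-\eps\partial_t)\doc{\ve}_{\eps}$, not on $\doc{\ve}_{\eps}$, and the crude alternative $\doc{\ve}_{\eps}=\partial_t\ve_{\eps}+\rot\ve_{\eps}\times\ve_{\eps}$ yields only an $O(\eps^{-1/2})$ bound. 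The missing idea --- the key trick inherited from \cite{OSU} --- is to invert the elliptic-in-time operator $1-\eps\partial_t$ by writing $\doc{\ve}_{\eps}=K*(\eps^{\f12-\f1q}N^1_{\eps}\ve_{\eps}+\dots+A\ve_{\eps}-\fe)$ with the future-looking kernel $K(t)=\eps^{-1}e^{t/\eps}\chi_{t\leq0}$, whose unit $L^1$-norm makes the convolution a contraction on the relevant dual Bochner spaces, so that the bounds \eqref{Nest1}--\eqref{Aes} transfer to $\doc{\ve}_{\eps}$ uniformly in $\eps$. Without this step the last estimate in \eqref{odhad4} (and hence the one on $\partial_t\ve_{\eps}$) does not follow. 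A further small slip: the inertial term expands as $\eps\doc{\ve}_{\eps}\cdot(\partial_t\ve_{\eps}+2\rot\ve_{\eps}\times\ve_{\eps})=\eps|\doc{\ve}_{\eps}|^2+\eps\doc{\ve}_{\eps}\cdot(\rot\ve_{\eps}\times\ve_{\eps})$, not $\eps|\partial_t\ve_{\eps}|^2$ plus that cross term; the paper's identity \eqref{a+b} packages this so that the uncontrolled part carries exactly the coefficient $\f14$, which is where the sharp condition $\min(\sigma_4,\rho_4)>\f{c_4}4$ comes from.
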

	\begin{proof}
		To get an estimate on $(0,\eps)$, we notice that $e^{-\f t{\eps}}\geq e^{-1}$ on that interval, hence we can just apply \eqref{mins}, use Young's inequality and \eqref{poin} to eliminate $\fe$ and then estimate the exponential from below. This leads to
		\begin{align}
			C\geq I_{\eps}(\ve_{\eps})&\geq C\int_0^{\infty}e^{-\f t{\eps}}\Big(\ii(\eps|\doc{\ve}_{\eps}|^2+|\Dv_{\eps}|^2+|\Dv_{\eps}|^r+\eps|\Dv_{\eps}|^4+\eps|\Dv_{\eps}|^q)\nonumber\\
			&\qquad+\inn(|\ve_{\eps}|^2+|\ve_{\eps}|^r+\eps|\ve_{\eps}|^4+\eps|\ve_{\eps}|^q)\Big)-C\nonumber\\
			&\qquad\geq C\int_0^{\eps}\Big(\ii(\eps|\doc{\ve}_{\eps}|^2+|\Dv_{\eps}|^2+|\Dv_{\eps}|^r+\eps|\Dv_{\eps}|^4+\eps|\Dv_{\eps}|^q)\nonumber\\
			&\qquad+\inn(|\ve_{\eps}|^2+|\ve_{\eps}|^r+\eps|\ve_{\eps}|^4+\eps|\ve_{\eps}|^q)\Big)-C.\label{malej}
		\end{align}
		
		Next, we let $T>0$ and choose $\fit\coloneqq\eta(\ve_{\eps}-\ve_0^{\eps})$ in \eqref{EL1}, where
		\begin{equation*}
			\eta(t)=\Big\{\begin{aligned}
				&e^{\f{t}{\eps}}-1,\quad t\leq T\\
				&e^{\f{T}{\eps}}-1,\quad t>T.
			\end{aligned}
		\end{equation*}
		Note that then $\fit\in \Vl_0$ and $\fit(0)=0$. Hence, we obtain
		\begin{align}\label{O}
			&\int_0^{\infty}e^{-\f{t}{\eps}}\eta\Big(\ii\eps\doc{\ve}_{\eps}\cdot(\partial_t\ve_{\eps}+2\rot\ve_{\eps}\times\ve_{\eps})+\ii\Sb_{\eps}(\Dv_{\eps})\cdot\Dv_{\eps}+\inn\s(\ve_{\eps})\cdot\ve_{\eps}\Big)\nonumber\\
			&\qquad+\int_0^{\infty}\eps e^{-\f{t}{\eps}}\eta'\ii\partial_t(\tfrac12|\ve_{\eps}|^2)\nonumber\\
			&\quad=\int_0^{\infty}e^{-\f{t}{\eps}}\eta\Big(\ii\eps\doc{\ve}_{\eps}\cdot(\partial_t\ve_0^{\eps}+\rot\ve_{\eps}\times\ve_0^{\eps}+\rot\ve_0^{\eps}\times\ve_{\eps})+\ii\fe\cdot(\ve_{\eps}-\ve_0^{\eps})\nonumber\\
			&\qquad+\ii\Sb_{\eps}(\Dv_{\eps})\cdot\Dv_0^{\eps}+\inn\s(\ve_{\eps})\cdot\ve_0^{\eps}\Big)+\int_0^{\infty}\eps e^{-\f{t}{\eps}}\eta'\ii\doc{\ve}_{\eps}\cdot\ve_0^{\eps},
		\end{align}
		where we used \eqref{canc} in order to eliminate the term $(\nn\ve_{\eps})\ve_{\eps}\cdot\ve_{\eps}$, which would otherwise be impossible to control for general boundary conditions and $r\leq3$. Next, we apply the identity
		\begin{equation}\label{a+b}
			\doc{\ve}_{\eps}\cdot(\partial_t\ve_{\eps}+2\rot\ve_{\eps}\times\ve_{\eps})=|\partial_t\ve_{\eps}+\tfrac32\rot\ve_{\eps}\times\ve_{\eps}|^2-\tfrac14|\rot\ve_{\eps}\times\ve_{\eps}|^2,
		\end{equation}
		we note that $\eps e^{-\frac{t}{\eps}}\eta'(t)=\chi_{(0,T)}(t)$ and $\ve_{\eps}(0)=\ve_0^{\eps}(0)$ to rewrite \eqref{O} as
		\begin{align}
			&\int_0^{\infty}e^{-\f{t}{\eps}}\eta\Big(\ii\eps|\partial_t\ve_{\eps}+\tfrac32\rot\ve_{\eps}\times\ve_{\eps}|^2+\ii\Sb_{\eps}(\Dv_{\eps})\cdot\Dv_{\eps}+\inn\s_{\eps}(\ve_{\eps})\cdot\ve_{\eps}\Big)\nonumber\\
			&\quad+\f12\ii|\ve_0^{\eps}(0)|^2+\f12\ii|\ve_{\eps}(T)|^2\nonumber\\
			&=\int_0^{\infty}e^{-\f{t}{\eps}}\eta\Big(\ii\eps(\partial_t\ve_{\eps}+\tfrac32\rot\ve_{\eps}\times\ve_{\eps})\cdot(\partial_t\ve_0^{\eps}+\rot\ve_{\eps}\times\ve_0^{\eps}+\rot\ve_0^{\eps}\times\ve_{\eps})\nonumber\\
			&\quad+\f14\ii\eps|\rot\ve_{\eps}\times\ve_{\eps}|^2-\frac12\ii\eps(\rot\ve_{\eps}\times\ve_{\eps})\cdot(\partial_t\ve_0^{\eps}+\rot\ve_{\eps}\times\ve_0^{\eps}+\rot\ve_0^{\eps}\times\ve_{\eps})\nonumber\\
			&\quad+\ii\Sb_{\eps}(\D\ve_{\eps})\cdot\D\ve_0^{\eps}+\inn\s_{\eps}(\ve_{\eps})\cdot\ve_0^{\eps}+\ii\fe\cdot(\ve_{\eps}-\ve_0^{\eps})\Big)\nonumber\\
			&\quad+\ii|\ve_0^{\eps}(0)|^2+\int_0^{T}\ii\doc{\ve}_{\eps}\cdot\ve_0^{\eps}.\label{tested}
		\end{align}
		Our aim is to estimate the terms on the right-hand side by the $\omega$-Young inequality, so that the part containing $\ve_{\eps}$ is sufficiently small and the other (large) part depends only on $\ve_0^{\eps}$, for which we can use assumptions \eqref{H0A}, \eqref{H0B}, \eqref{Hf} and include these terms into a~constant $C$ (or $C(\omega)$). Let us proceed term by term and observe first, using \eqref{poin}, that
		\begin{align}
			&\eps\ii|\partial_t\ve_0^{\eps}+\rot\ve_{\eps}\times\ve_0^{\eps}+\rot\ve_0^{\eps}\times\ve_{\eps}|^2\nonumber\\
			&\quad\leq 3\eps\ii(|\partial_t\ve_0^{\eps}|^2+|\rot\ve_{\eps}\times\ve_0^{\eps}|^2+|\rot\ve_0^{\eps}\times\ve_{\eps}|^2)\nonumber\\
			&\quad\leq 3\eps\ii(|\partial_t\ve_0^{\eps}|^2+2|\nn\ve_{\eps}|^2|\ve_0^{\eps}|^2+2|\nn\ve_0^{\eps}|^2|\ve_{\eps}|^2)\nonumber\\
			&\quad\leq\omega\eps C\Big(\ii|\D\ve_{\eps}|^4+\inn|\ve_{\eps}|^4\Big)+\eps C\norm{\partial_t\ve_0^{\eps}}_2^2+\eps C(\omega)\norm{\ve_0^{\eps}}_{1,4}^4.\label{sum3}
		\end{align}	
		The products $\eps|\rot\ve_{\eps}\times\ve_{\eps}|^2$ are already estimated in \eqref{rotw}. Note that the assumptions on $\sigma_4$ and $\rho_4$ are needed to absorb the term $\f14\eps|\rot\ve_{\eps}\times\ve_{\eps}|^4$ which is the only term on the right-hand side of \eqref{tested} that does not involve data. Next, for the term with $\Sb_{\eps}$, we use \eqref{Sup} and Young's inequality to get
		\begin{align}
			\ii\Sb_{\eps}(\D\ve_{\eps})\cdot\D\ve_0^{\eps}&\leq C\ii(|\Dv_{\eps}|+|\Dv_{\eps}|^{r-1}+\eps|\Dv_{\eps}|^3+\eps|\Dv_{\eps}|^{q-1})|\D\ve_0^{\eps}|\nonumber\\
			&\leq\omega C\ii(|\D\ve_{\eps}|^2+|\D\ve_{\eps}|^r+\eps|\D\ve_{\eps}|^4+\eps|\D\ve_{\eps}|^q)\nonumber\\
			&\qquad+C(\omega)\ii(|\Dv_0^{\eps}|^2+|\Dv_0^{\eps}|^r+\eps|\Dv_0^{\eps}|^4+\eps|\Dv_0^{\eps}|^q)\label{aux}
		\end{align}
		and, similarly, we also obtain
		\begin{align}
			\inn\s_{\eps}(\ve_{\eps})\cdot\ve_0^{\eps}&\leq \omega C\inn(|\ve_{\eps}|^2+|\ve_{\eps}|^r+\eps|\ve_{\eps}|^4+\eps|\ve_{\eps}|^q)\nonumber\\
			&\qquad+C(\omega)\inn(|\ve_0^{\eps}|^2+|\ve_0^{\eps}|^r+\eps|\ve_0^{\eps}|^4+\eps|\ve_0^{\eps}|^q).\label{aux2}
		\end{align}
		To handle the term containing $\fe$, we proceed analogously as in \eqref{fest}, leading to
		\begin{equation}\label{fest2}
			\Big|\ii\fe\cdot(\ve_{\eps}-\ve_0^{\eps})\Big|\leq\omega C\Big(\ii|\Dv_{\eps}|^2+\inn|\ve_{\eps}|^2\Big)+C(\omega)(\norm{\ve_0^{\eps}}_{1,2}^2+\norm{\fe}_2^2).
		\end{equation}
		Regarding the last two terms in \eqref{tested}, we use \eqref{poin} to get
		\begin{align}
			\ii(\rot\ve_{\eps}\times\ve_{\eps})\cdot\ve_0^{\eps}&\leq\f{2\omega}{r}\ii|\rot\ve_{\eps}|^{\f r2}|\ve_{\eps}|^{\f r2}+\f{r-2}{r\omega}\ii|\ve_0^{\eps}|^{\f{r}{r-2}}\nonumber\\
			&\leq C\omega\ii|\D\ve_{\eps}|^r+C\omega\inn|\ve_{\eps}|^r+C(\omega)\norm{\ve_0^{\eps}}_{\f{r}{r-2}}^{\f{r}{r-2}}\label{rotp}
		\end{align}
		and then the Bochner version of integration by parts formula to write
		\begin{align*}
			&\ii|\ve_0^{\eps}(0)|^2+\int_0^T\ii\doc{\ve}_{\eps}\cdot\ve_0^{\eps}\\
			&\qquad=\ii\ve_{\eps}(T)\cdot\ve_0^{\eps}(T)-\int_0^T\scal{\partial_t\ve_0^{\eps},\ve_{\eps}}+\int_0^T\ii(\rot\ve_{\eps}\times\ve_{\eps})\cdot\ve_0^{\eps}\\
			&\leq\f14\ii|\ve_{\eps}(T)|^2+\omega C\Big(\int_0^T\ii(|\Dv_{\eps}|^2+|\D\ve_{\eps}|^r)+\int_0^T\inn(|\ve_{\eps}|^2+|\ve_{\eps}|^r)\Big)\\
			&\qquad+C(\omega)\inf_{\we_1+\we_2=\partial_t\ve_0^{\eps}}(\norm{\we_1}_{(\X^2_{\di})'}^2+\norm{\we_2}_{(\X^r_{\di})'}^{r'})\\
			&\qquad+C(\omega)\norm{\ve_0^{\eps}}_{L^{\f{r}{r-2}}(0,\infty;L^{\f{r}{r-2}})}^{\f{r}{r-2}}+\norm{\ve_0^{\eps}}_{L^{\infty}(0,\infty;L^2)}^2.
		\end{align*}
		Using this, \eqref{sum3}, \eqref{aux}, and \eqref{aux2} in \eqref{tested}, choosing $\omega>0$ sufficiently small and recalling \eqref{H0A}, \eqref{H0B}, we get
		\begin{align}
			&\int_0^{\infty}e^{-\f{t}{\eps}}\eta\Big(\ii\big(\eps|\partial_t\ve_{\eps}+\tfrac32\rot\ve_{\eps}\times\ve_{\eps}|^2\big)\nonumber\\
			&\qquad+\ii(|\D\ve_{\eps}|^2+|\D\ve_{\eps}|^r+\eps|\Dv_{\eps}|^4+\eps|\Dv_{\eps}|^q)\nonumber\\
			&\qquad+\inn(|\ve_{\eps}|^2+|\ve_{\eps}|^r+\eps|\ve_{\eps}|^4+\eps|\ve_{\eps}|^q)\Big)+\f12\ii|\ve_0^{\eps}(0)|^2+\f14\ii|\ve_{\eps}(T)|^2\nonumber\\
			&\leq\omega C\Big(\int_0^T\ii(|\Dv_{\eps}|^2+|\D\ve_{\eps}|^r)+\int_0^T\inn(|\ve_{\eps}|^2+|\ve_{\eps}|^r)\Big)+\norm{\ve_0^{\eps}}_{L^{\infty}(0,\infty;L^2)}^2\nonumber\\
			&\qquad +C(\omega)\int_0^{\infty}\big(\norm{\ve_0^{\eps}}_{1,2}^2+\norm{\ve_0^{\eps}}_{1,r}^r+\eps\norm{\ve_0^{\eps}}_{1,4}^4+\eps\norm{\ve_0^{\eps}}_{1,q}^q+\eps\norm{\partial_t\ve_0^{\eps}}_2^2\big)\nonumber\\
			&\qquad+C(\omega)\inf_{\we_1+\we_2=\partial_t\ve_0^{\eps}}(\norm{\we_1}_{(\X^2_{\di})'}^2+\norm{\we_2}_{(\X^r_{\di})'}^{r'})+C(\omega)\norm{\ve_0^{\eps}}_{L^{\f{r}{r-2}}(0,\infty;L^{\f{r}{r-2}})}^{\f{r}{r-2}}\nonumber\\
			&\leq\omega C\Big(\int_{\eps}^T\ii(|\Dv_{\eps}|^2+|\D\ve_{\eps}|^r)+\int_{\eps}^T\inn(|\ve_{\eps}|^2+|\ve_{\eps}|^r)\Big)+C(\omega),\label{es}
		\end{align}
		where in the last inequality we also used \eqref{malej} to estimate the integral over $(0,\eps)$ by a~constant. If we apply the inequality
		\begin{equation*}
			e^{-\f t{\eps}}\eta\geq(1-e^{-\f{t}{\eps}})\chi_{(\eps,T)}\geq(1-e^{-1})\chi_{(\eps,T)},\quad t>0,
		\end{equation*}
		on the left-hand side of \eqref{es}, we see that the integral on the right-hand side of \eqref{es} gets absorbed for $\omega$ sufficiently small, leading to
		\begin{align*}
			&\int_{\eps}^T\Big(\ii\big(\eps|\partial_t\ve_{\eps}+\tfrac32\rot\ve_{\eps}\times\ve_{\eps}|^2\big)\nonumber\\
			&\qquad+\ii(|\D\ve_{\eps}|^2+|\D\ve_{\eps}|^r+\eps|\Dv_{\eps}|^4+\eps|\Dv_{\eps}|^q)\nonumber\\
			&\qquad+\inn(|\ve_{\eps}|^2+|\ve_{\eps}|^r+\eps|\ve_{\eps}|^4+\eps|\ve_{\eps}|^q)\Big)+\ii|\ve_{\eps}(T)|^2\leq C.
		\end{align*}
		Putting this information together with \eqref{malej} and then taking the essential supremum over $T>0$, we arrive at
		\begin{align}
			&\int_0^{\infty}\Big(\ii(|\D\ve_{\eps}|^2+|\D\ve_{\eps}|^r+\eps|\Dv_{\eps}|^4+\eps|\Dv_{\eps}|^q)\nonumber\\
			&\qquad+\inn(|\ve_{\eps}|^2+|\ve_{\eps}|^r+\eps|\ve_{\eps}|^4+\eps|\ve_{\eps}|^q)\Big)+\esssup_{(0,\infty)}\ii|\ve_{\eps}|^2\leq C\label{dis}
		\end{align}
		and also at
		\begin{align}\label{anis}
			&\int_0^{\eps}\ii\eps|\doc{\ve}_{\eps}|^2+\int_{\eps}^{\infty}\ii\eps|\doc{\ve}_{\eps}+\tfrac12\rot\ve_{\eps}\times\ve_{\eps}|^2\leq C.
		\end{align}
		By virtue of \eqref{poin}, estimate \eqref{dis} is equivalent to \eqref{odhad1}. Moreover, the information $\norm{\eps^{\f14}\ve_{\eps}}_{\X^4}\leq C$ implies that $\norm{\eps^{\f12}\rot\ve_{\eps}\times\ve_{\eps}}_2\leq C$ via \eqref{rotw0} which, together with \eqref{anis}, yields \eqref{odhad2} through the Young inequality. Moreover, by a~similar estimate to \eqref{rotp}, we also immediately obtain \eqref{odhad3} from \eqref{odhad1}.
		
		To extract information about $\partial_t\ve_{\eps}$, we need first to estimate $\rot\ve_{\eps}\times\ve_{\eps}$ in an appropriate dual space. In the case $r\geq3$, we use the information that $\rot\ve_{\eps}$ is bounded in $L^2(0,\infty;L^2(\Omega;\R^3))$ and that $\ve_{\eps}$ is bounded in $L^{\infty}(0,\infty;L^2(\Omega;\R^3))\cap L^2(0,\infty;L^6(\Omega;\R^3))$ (using the Sobolev embedding), which leads to
		\begin{equation}\label{rotpr}
			\norm{\rot\ve_{\eps}\times\ve_{\eps}}_{L^2(0,\infty;L^1)\cap L^1(0,\infty;L^{\f32})}\leq C
		\end{equation}
		by the H\"older inequality. An interpolation then gives
		\begin{equation*}
			\norm{\rot\ve_{\eps}\times\ve_{\eps}}_{L^{r'}(0,\infty;L^{\f{3r}{2r+2}})}\leq C.
		\end{equation*}
		As $\f{3r}{2r+2}\geq\f98>1$ and $\X^r\embl L^r(0,\infty;L^9(\Omega;\R^3))$, we deduce that
		\begin{align}\label{rot}
			\norm{\rot\ve_{\eps}\times\ve_{\eps}}_{(\X^r)'}\leq C\norm{\rot\ve_{\eps}\times\ve_{\eps}}_{L^{r'}(0,\infty;L^{\f98})}\leq C.
		\end{align}
		On the other hand, if $\f{11}5\leq r<3$, we use instead the information that $\ve_{\eps}$ is bounded in $\X^r$, replacing \eqref{rotpr} with
		\begin{equation*}
			\norm{\rot\ve_{\eps}\times\ve_{\eps}}_{L^r(0,\infty;L^{\f{2r}{r+2}})\cap L^{\f r2}(0,\infty;L^{\f{3r}{6-r}})}\leq C.
		\end{equation*}
		Using interpolation once again yields
		\begin{equation*}
			\norm{\rot\ve_{\eps}\times\ve_{\eps}}_{L^{r'}(0,\infty;L^{z'})}\leq C,\quad\text{where}\quad z\coloneqq\f{6r}{(r-2)(5r-3)}.
		\end{equation*}
		A direct calculation verifies that, in the considered range of $r$, we have $1<z\leq\f{33}4\leq\f{3r}{3-r}$, hence the embeddings
		\begin{equation*}
			W^{1,r}(\Omega;\R^3)\embl L^z(\Omega;\R^3)\quad\text{and}\quad L^{r'}(0,\infty;L^{z'}(\Omega;\R^3))\embl (\X^r)',
		\end{equation*}
		hold true, from which we deduce
		\begin{equation*}
			\norm{\rot\ve_{\eps}\times\ve_{\eps}}_{(\X^r)'}\leq C\norm{\rot\ve_{\eps}\times\ve_{\eps}}_{L^{r'}(0,\infty;L^{z'})}\leq C.
		\end{equation*}
		This together with \eqref{rot} gives
		\begin{equation}\label{rotww}
			\norm{\rot\ve_{\eps}\times\ve_{\eps}}_{(\X^r)'}\leq C
		\end{equation}
		for any $r\geq\f{11}5$. 
		
		Next, we estimate $\doc{\ve}_{\eps}$ by applying a~similar method as in \cite{OSU}. Estimates \eqref{odhad1}--\eqref{odhad3} proved thus far and the H\"older inequality show that the functionals $N^1_{\eps}\ve_{\eps}$, $N^2_{\eps}\ve_{\eps}$, $A^1_{\eps}\ve_{\eps}$, $A^2_{\eps}\ve_{\eps}$ and $A\ve_{\eps}$, defined in Lemma~\ref{LEL}, are bounded in the following sense:
		\begin{align}
			\norm{N^1_{\eps}\ve_{\eps}}_{(\X^{\f{2q}{q-2}})'}&\leq C\norm{\eps^{\f1q}\ve_{\eps}}_{\X^q}\norm{\eps^{\f12}\partial_t\ve_{\eps}}_{L^2(0,\infty;L^2)}\leq C,\label{Nest1}\\
			\norm{N^2_{\eps}\ve_{\eps}}_{(\X^{\f q{q-3}})'}&\leq C\norm{\eps^{\f1q}\ve_{\eps}}^3_{\X^q}\leq C,\label{Nest2}\\
			\norm{A^1_{\eps}\ve_{\eps}}_{(\X^4)'}&\leq C\norm{\eps^{\f14}\ve_{\eps}}^3_{\X^4}\leq C,\label{Aest1}\\
			\norm{A^2_{\eps}\ve_{\eps}}_{(\X^q)'}&\leq C\norm{\eps^{\f1q}\ve_{\eps}}^{q-1}_{\X^q}\leq C,\label{Aest}\\
			\norm{A\ve_{\eps}}_{(\X^2\cap \X^r)'}&\leq C\norm{\ve_{\eps}}_{\X^2}+C\norm{\ve_{\eps}}_{\X^r}^{r-1}\leq C.\label{Aes}
		\end{align}
		Then, we use \eqref{EL3} to express $\doc{\ve}_{\eps}$ as a~temporal convolution with the kernel $K(t)\coloneqq\eps^{-1}e^{\f{t}{\eps}}\chi_{t\leq0}$. This leads to
		\begin{equation}\label{conv}
			\doc{\ve}_{\eps}=K*(\eps^{\f12-\f1q}N^1_{\eps}\ve_{\eps}+\eps^{1-\f3q}N^2_{\eps}\ve_{\eps}+\eps^{\f1q}A_{\eps}\ve_{\eps}+A\ve_{\eps}-\fe),
		\end{equation}
		which is understood as an identity in the space $L^1_{\rm loc}(0,\infty;V^{-1,q'}_{\di})$. Using the properties of convolution, \eqref{Nest1}--\eqref{Aes}, and recalling \eqref{ineq}, the right-hand side of \eqref{conv} is a~continuous linear functional in the space $\X^2\cap \X^q$. Identity \eqref{conv} thus gives
		\begin{equation}\label{docb}
			\norm{\doc{\ve}_{\eps}}_{(\X^2_{\di}\cap \X^q_{\di})'}\leq C
		\end{equation}
		and in combination with \eqref{rotww} and the embedding $\X^2_{\di}\cap \X^q_{\di}\embl \X^r$, this proves \eqref{odhad4}.
	\end{proof}
	
	It is clearly seen in the proof above that the assumption $r\geq\f{11}5$ is used only to show that the convective term is a~bounded functional on $\X^r$, uniformly with respect to $\eps>0$. This information is useful later when taking the limit $\eps\to0_+$. Otherwise, it is important that $r\geq2$, because then one can absorb the term $\ii(\nn\ve_{\eps})\ve_{\eps}\cdot\ve_0^{\eps}$, recall \eqref{rotp} and remark (vi) above.
	
	It is apparent that the term $\eps|\Dv_{\eps}|^4$ is needed to control terms related to the convective term. The role of the higher order stabilization $\eps|\Dv_{\eps}|^q$, $q>4$, is later clarified while taking the limit $\eps\to0_+$.
	
	Comparing with the usual existence theories for Navier-Stokes equations, one may wonder why we need a~super-linear growth also in the boundary terms on $\Gn$. We recall that we do not want to impose additional geometrical assumptions on $\Omega$. In this case, there will always be a~term on the right-hand side of \eqref{poin} that controls the overall speed of the flow (to have just $\norm{\we}_{1,p}\leq c'_p\norm{\D\we}_p$, one would need to exclude domains that are ``too special'' such as axisymmetric domains, parallel plates etc.) As opposed to usual existence theories, we cannot choose this term to be $(\int_{\Omega}|\we|)^p$ since we do not know that $\norm{\ve_{\eps}}_{L^{\infty}(0,\infty;L^2)}$ is bounded a~priori. In our case, this information needs to be carefully deduced from the Euler-Lagrange equations by testing with a~solution (minimum), but this generates many terms without a~sign, especially in the case with nonhomogeneous data, as can be seen in the proof above. It thus seems natural to take instead into consideration the fact that the fluid loses energy also due to friction on $\Gn$. But then \eqref{poin} indirectly requires the scalings of $\Sb$ and $\s$ to be compatible, explaining the same nonlinear growth. The whole situation would simplify in the case we considered $I_{\eps}$ for the full velocity gradient, since then one does not need the Korn inequality. Nevertheless, even for the Poincar\'e inequality to hold in the form $\norm{\we}_{1,p}\leq c_p''\norm{\nn\we}_p$, certain domains have to be ruled out.

        \subsection{\UUU Pressure reconstruction}
	At this point, the most of the work leading to Theorem~\ref{Tex} is done. Before proceeding with its proof, let us state one more auxiliary result, that is used in the pressure construction. Let us define $L^p_0(\Omega;\R)\coloneqq\{f\in L^p(\Omega;\R):\ii f=0\}$. Up to the boundary conditions, the following proposition is very standard.
	
	\begin{prop}\label{Prop1}
		Let $1<p<\infty$ and $\gb\in V^{-1,p'}$ be such that
		\begin{equation}\label{grad}
			\scal{\gb,\fit}=0\quad\text{for all }\fit\in V^{1,p}_{\di},
		\end{equation}
		Then, there exists an unique function $\qr\in L^{p'}_0(\Omega;\R)$ satisfying
		\begin{equation}\label{Sto2}
			\scal{\gb,\fit}=-\ii \qr\di\fit\quad\text{for all }\fit\in V^{1,p}
		\end{equation}
		and
		\begin{equation}\label{stoest}
			\norm{\qr}_{p'}\leq C(p,\Omega)\norm{\gb}_{V^{-1,p'}}.
		\end{equation}
	\end{prop}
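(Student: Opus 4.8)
The strategy is the classical one for recovering a pressure from a functional that kills divergence-free test fields: represent $\qr$ via Riesz duality on $L^{p'}_0(\Omega;\R)$ after composing $\gb$ with a bounded right inverse of the divergence, and then use \eqref{grad} to verify the identity \eqref{Sto2}. The only delicate point is the presence of the mixed boundary conditions hidden in the definition of $V^{1,p}$.

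First I would record that on the bounded Lipschitz domain $\Omega$ the divergence admits a bounded linear right inverse compatible with $V^{1,p}$. Indeed, by the Bogovskii construction (a finite partition of unity reducing to star-shaped pieces; see e.g.\ \cite[Sect.~III.3]{galdi}) there is a bounded linear operator $\mathcal{B}\colon L^p_0(\Omega;\R)\to W^{1,p}_0(\Omega;\R^3)$ with $\di\mathcal{B}h=h$ and $\norm{\mathcal{B}h}_{1,p}\leq C(p,\Omega)\norm{h}_p$, which moreover sends functions smooth up to $\partial\Omega$ (of zero mean) to fields in $\Cl^{\infty}(\overline{\Omega};\R^3)\cap W^{1,p}_0(\Omega;\R^3)$. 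Since $\Cl^{\infty}_c(\Omega;\R^3)\subset\Cl^{\infty}_{\partial}$, we have $W^{1,p}_0(\Omega;\R^3)\subset V^{1,p}$, so $\mathcal{B}$ is also a bounded right inverse of $\di\colon V^{1,p}\to L^p_0(\Omega;\R)$; this map lands in $L^p_0(\Omega;\R)$ because every $\fit\in V^{1,p}$ satisfies $\ii\di\fit=\int_{\partial\Omega}\fit\cdot\n=0$. The same construction yields the density of $\Cl^{\infty}_{\partial,\di}$ in $\{\fit\in V^{1,p}:\di\fit=0\}$: given such a $\fit$, pick $\fit_k\in\Cl^{\infty}_{\partial}$ with $\fit_k\to\fit$ in $W^{1,p}$, note $\di\fit_k\to 0$ in $L^p$ with $\ii\di\fit_k=\int_{\partial\Omega}\fit_k\cdot\n=0$, and set $\pit_k\coloneqq\fit_k-\mathcal{B}(\di\fit_k)$; then $\pit_k\in\Cl^{\infty}_{\partial,\di}$ (all homogeneous constraints defining $\Cl^{\infty}_{\partial}$ survive because $\mathcal{B}(\di\fit_k)$ vanishes on $\partial\Omega$) and $\pit_k\to\fit$ in $W^{1,p}$, hence $\fit\in V^{1,p}_{\di}$.

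Next I would define $\Lambda\colon L^p_0(\Omega;\R)\to\R$ by $\Lambda(h)\coloneqq\scal{\gb,\mathcal{B}h}$, so that $|\Lambda(h)|\leq\norm{\gb}_{V^{-1,p'}}\norm{\mathcal{B}}\,\norm{h}_p$. Extending $\Lambda$ to $L^p(\Omega;\R)$ by Hahn--Banach without increasing the norm, representing via $(L^p)'=L^{p'}$, and subtracting the spatial mean, I obtain a unique $\qr\in L^{p'}_0(\Omega;\R)$ with $\Lambda(h)=-\ii\qr h$ for all $h\in L^p_0(\Omega;\R)$ and $\norm{\qr}_{p'}\leq C(p,\Omega)\norm{\gb}_{V^{-1,p'}}$, which is \eqref{stoest}. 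To verify \eqref{Sto2}, take $\fit\in V^{1,p}$ and write $\fit=\pit+\mathcal{B}(\di\fit)$ with $\pit\coloneqq\fit-\mathcal{B}(\di\fit)\in V^{1,p}$, $\di\pit=0$; by the density just established $\pit\in V^{1,p}_{\di}$, so $\scal{\gb,\pit}=0$ by \eqref{grad}, and therefore $\scal{\gb,\fit}=\scal{\gb,\mathcal{B}(\di\fit)}=\Lambda(\di\fit)=-\ii\qr\,\di\fit$. For uniqueness, if $\qr_1,\qr_2\in L^{p'}_0(\Omega;\R)$ both satisfy \eqref{Sto2}, then $\ii(\qr_1-\qr_2)\di\fit=0$ for all $\fit\in V^{1,p}$; since $\di\colon V^{1,p}\to L^p_0(\Omega;\R)$ is onto, $\ii(\qr_1-\qr_2)h=0$ for all $h\in L^p_0(\Omega;\R)$, so $\qr_1-\qr_2$ is constant and the zero-mean normalization gives $\qr_1=\qr_2$. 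The only genuinely nontrivial ingredient is this Bogovskii right inverse adapted to $V^{1,p}$, i.e.\ respecting the constraints $\ve|_{\Gd}=0$, $\ve|_{\Gn}\cdot\n=0$, $\int_{\Gf^i}\ve\cdot\n=0$; the point to check carefully is that the standard construction preserves compact support, hence all these homogeneous conditions.
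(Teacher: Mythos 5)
Your overall architecture is sound and is, in substance, the same mechanism the paper relies on: everything reduces to the Bogovskii operator together with the inclusion $W^{1,p}_0(\Omega;\R^3)\subset V^{1,p}$. The paper packages this as an auxiliary nonlinear Stokes-type problem plus the inf-sup condition (and otherwise cites Galdi and Amrouche--Girault), whereas you run the dual argument directly via Hahn--Banach and $L^{p'}$-representation; that part of your write-up (boundedness of $\Lambda$, the estimate \eqref{stoest}, uniqueness via surjectivity of $\di\colon V^{1,p}\to L^p_0(\Omega;\R)$) is fine.

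There is, however, one genuine gap, and it sits exactly at the point you flag as ``the point to check carefully.'' To conclude $\scal{\gb,\pit}=0$ for $\pit=\fit-\mathcal{B}(\di\fit)$ you need $\pit\in V^{1,p}_{\di}$, i.e.\ the identity $V^{1,p}_{\di}=\{\fit\in V^{1,p}:\di\fit=0\}$. Your proof of this density rests on the claim that $\mathcal{B}$ maps zero-mean functions in $\Cl^{\infty}(\overline\Omega;\R)$ into $\Cl^{\infty}(\overline\Omega;\R^3)\cap W^{1,p}_0(\Omega;\R^3)$. That is not a property of the Bogovskii operator: it preserves smoothness up to the boundary (indeed, compact support) only for compactly supported data, and the relevant data $\di\fit_k$ with $\fit_k\in\Cl^{\infty}_{\partial}$ is not compactly supported. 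In general $\mathcal{B}(\di\fit_k)$ lies in $W^{1,p}_0(\Omega;\R^3)$ and is smooth in the interior, but on a merely Lipschitz domain it need not belong to $\Cl^{\infty}(\overline\Omega;\R^3)$ (the higher-order mapping properties $\mathcal{B}\colon W^{k,p}_0\to W^{k+1,p}_0$ require vanishing boundary values of the data), so $\pit_k\notin\Cl^{\infty}_{\partial,\di}$ and the approximation stalls. The gap is not cosmetic: since $\di$ induces an isomorphism $V^{1,p}/\ker(\di)\cong L^p_0(\Omega;\R)$, the annihilator of $\ker(\di|_{V^{1,p}})$ is precisely the set of functionals $\fit\mapsto-\ii\qr\di\fit$ with $\qr\in L^{p'}_0(\Omega;\R)$, so the proposition as stated (hypothesis on $V^{1,p}_{\di}$, conclusion on all of $V^{1,p}$) is \emph{equivalent} to the density you are trying to establish; it cannot be sidestepped by testing with smooth $\fit$ and passing to limits. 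A correct argument needs the localization/dilation techniques used for the analogous identity in the full-Dirichlet case (cf.\ Galdi, Sect.~III.4), adapted to the mixed boundary conditions --- a step the paper's own proof also delegates to the literature.
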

	\begin{proof}
		Since $W^{1,p}_0(\Omega;\R^3)\subset V^{1,p}$, one can apply, e.g., the result \cite[III.5.1]{galdi} to get \eqref{Sto2}. The pressure estimate \eqref{stoest} can be found, e.g., in \cite[Corollary~2.5.]{Girault}.
		
		To see more explicitly that the boundary conditions encoded in $V^{1,p}$ do not cause any difficulties, one can show that the auxiliary problem
		\begin{equation}\label{Sto}
			\ii|\nn\uu|^{p-2}\nn\uu\cdot\nn\fit+\ii|\uu|^{p-2}\uu\cdot\fit-\ii \qr\di\fit=\scal{\gb,\fit}
		\end{equation}
		for all $\fit\in V^{1,p}$ admits an unique solution $(\uu,\qr)\in V^{1,p}\times L^{p'}_0(\Omega;\R)$ whenever $\gb\in V^{-1,p'}$, which then obviously gives \eqref{Sto2} if \eqref{grad} holds. This is nothing but the weak formulation of the nonlinear problem
		\begin{align*}
			\di\uu&=0,&-\di(|\nn\uu|^{p-2}\nn\uu)+|\uu|^{p-2}\uu+\nn \qr&=\gb&\text{in }\Omega,\\
			\uu&=0,&&&\text{on }\Gd,\\
			\uu\cdot\n&=0,&((-\qr\I+|\nn\uu|^{p-2}\nn\uu)\n)_{\tau}&=0&\text{on }\Gn,\\
			\int_{\Gf^i}\uu\cdot\n&=0,&(-\qr\I+|\nn\uu|^{p-2}\nn\uu)\n&=c_i\n&\text{on }\Gf^i,
		\end{align*}
		where $c_i$ are the Lagrange multipliers to the constraints $\int_{\Gf^i}\uu\cdot\n=0$, $i=0,\ldots,n$. To find a~solution to \eqref{Sto} one may proceed by minimizing the functional
		\begin{equation*}
			J_k(\uu)\coloneqq\f1p\ii(|\nn\uu|^p+|\uu|^p+k|\di\uu|^p)-\scal{\gb,\uu}
		\end{equation*}
		and letting $k\to\infty$ (cf.~\cite[Ch.~I, \S 6]{Temam}). In any case, to obtain the pressure estimate \eqref{stoest}, one has to verify the inf-sup condition
		\begin{equation*}
			\inf_{\qr\in L^{p'}_0(\Omega;\R)}\sup_{\fit\in V^{1,p}}\f{\ii \qr\di\fit}{\norm{\qr}_{p'}\norm{\fit}_{1,p}}\geq C>0.
		\end{equation*}
		This condition is again an immediate consequence of $W^{1,p}_0(\Omega;\R)\subset V^{1,p}$, the fact that the norms $\norm{\cdot}_{1,p}$ and $\norm{\nn\cdot}_p$ are equivalent on $W^{1,p}_0(\Omega;\R^3)$ and the standard inf-sup condition for the pair $(W^{1,p}_0(\Omega;\R^3),L^{p'}_0(\R))$ (to be found in various forms in the works by O.~A.~Ladyzhenskaya, \mbox{J.-L.~Lions}, E.~Magenes, I.~Babu\v{s}ka, J.~Ne\v{c}as, or F.~Brezzi), which can be proved by applying the Bogovskii operator to $|\qr|^{p'-2}\qr-\f1{|\Omega|}\ii|\qr|^{p'-2}\qr\in L^p_0(\Omega;\R)$.
	\end{proof}
	
	Note that Proposition~\ref{Prop1} works for a~general Lipschitz domain $\Omega$ (actually only the local cone property is needed), which is desirable in our application, cf.~Figure~\ref{fig}. This contrasts with other methods of constructing $\qr$, such as the Helmholtz decomposition or the $L^p$-theory for the Stokes system that require some regularity of $\partial\Omega$, cf.~\cite{Girault} and references therein.

        \subsection{\UUU Passage to the limit as $\eps\to 0_+$} 
                \UUU Let $\{\ve_{\eps}\}_{\eps>0}$ be a sequence of
                minimizers to $I_\eps$, which exist due to Lemmas~\ref{Lex} and
                ~\ref{LEL}. \EEE
		
	\UUU The \EEE uniform estimates \eqref{odhad1}--\eqref{odhad4} guaranteed by Lemma~\ref{Luni}, reflexivity of the underlying spaces and standard compactness arguments involving the compact Sobolev embeddings,  the Aubin-Lions lemma and the Vitali convergence theorem imply the existence of a~function $\ve$ with property \eqref{NSpro1} and of a~(not relabeled) subsequence of $\{\ve_{\eps}\}_{\eps>0}$, satisfying \eqref{cc1}--\eqref{cc7} and also
		\begin{align}
			&\rot\ve_{\eps}\times\ve_{\eps}\wc\rot\ve\times\ve&&\text{weakly in }L^{\f r2}(Q_{\infty};\R^3),\label{cc10}\\
			&\doc{\ve}_{\eps}\wc\doc{\ve}&&\text{weakly in }(\X^2_{\di}\cap\X^q_{\di})',\\
			&\eps|\Dv_{\eps}|^2\Dv_{\eps}\to0&&\text{strongly in }L^{\f43}(Q_{\infty};\R^3),\\
			&\eps|\Dv_{\eps}|^{q-2}\Dv_{\eps}\to0&&\text{strongly in }L^{q'}(Q_{\infty};\R^3),\\
			&\Sb(\Dv_{\eps})\wc S&&\text{weakly in }L^2(Q_{\infty};\Sym)+L^{r'}(Q_{\infty};\Sym),\\
			&\eps|\ve_{\eps}|^2\ve_{\eps}\to0&&\text{strongly in }L^{\f43}(\Sigma_{\infty};\R^3),\\
			&\eps|\ve_{\eps}|^{q-2}\ve_{\eps}\to0&&\text{strongly in }L^{q'}(\Sigma_{\infty};\R^3),\\
			&\s(\ve_{\eps})\wc\z&&\text{weakly in }L^2(\Sigma_{\infty};\R^3)+L^{r'}(\Sigma_{\infty};\R^3),\label{cc14}
		\end{align}
		for some functions $S$ and $\z$ and for $\Sigma_T\coloneqq(0,T)\times\Gn$, $T\in(0,\infty]$. These convergences are clearly sufficient to take the limit in \eqref{EL2}, yielding
		\begin{equation}\label{lim}
			\int_0^{\infty}\ii(-\ve\cdot\partial_t\fit+(\rot\ve\times\ve)\cdot\fit+S\cdot\D\fit)+\int_0^{\infty}\inn\z\cdot\fit=\int_0^{\infty}\ii\fe\cdot\fit
		\end{equation}
		for all $\fit\in \Cl_c^{\infty}((0,\infty);V^{1,q}_{\di})$. Recalling \eqref{rotww}, note that all terms, except for the time derivative, are well defined also if $\fit\in \X^2_{\di}\cap \X^r_{\di}$. Therefore, we read from \eqref{lim} that the functional $\partial_t\ve$ extends uniquely to $\partial_t\ve\in(\X^2_{\di}\cap \X^r_{\di})'$ (proving \eqref{NSpro2}), and hence
		\begin{equation}\label{wea}
			\int_0^{\infty}\scal{\partial_t\ve,\fit}+\int_{Q_{\infty}}((\rot\ve\times\ve)\cdot\fit+S\cdot\D\fit-\fe\cdot\fit)+\int_0^{\infty}\inn\z\cdot\fit=0
		\end{equation}
		for all $\fit\in \X^2_{\di}\cap \X^r_{\di}$.
		
		To identify $\ve(0)$, we recall that, by our construction, we have $\ve_{\eps}(0)=\ve_0^{\eps}(0)\to\uu_0$ strongly in $L^2(\Omega;\R^3)$. Further, as the sequence $\partial_t\ve_{\eps}$ is uniformly bounded in $L^{q'}(0,T;V^{-1,q'}_{\di})$ for some $T>0$, there is a~non relabeled subsequence $\ve_{\eps}$ converging strongly in $\Cl([0,T];V^{-1,q'}_{\di})$ by the Arzel\`a-Ascoli theorem. In particular, we have $\ve_{\eps}(0)\to\ve(0)$ in $V^{-1,q'}_{\di}$, and hence $\ve(0)=\uu_0$ (as both $\ve(0)$ and $\uu_0$ are divergence-free), which is \eqref{NSpro3}.
		
		By the properties of the trace operator (see \cite[Corollary~1.13.]{Indiana}), it is standard to show that the trace of $\ve_{\eps}$ actually converges strongly to the trace of $\ve$ on $\Gn$ (proving the second part of \eqref{cc4}) and then, by the continuity of $\s$, this necessarily means that $\z=\s(\ve)$.
		
		To prove \eqref{NSdiv}, it remains to identify the weak limit $S$. To this end, we take advantage of the fact that in the considered case $r\geq\f{11}5$, the function $\ve$, after a~correction of boundary values, is an admissible test function in \eqref{wea}. Let $\ve_{\delta}$ be a~fixed element of the approximating sequence $\{\ve_{\eps}\}_{\eps>0}$ and let $0\leq\eta\in\Cl^{\infty}_c((0,T))$, $T>0$. Next, we observe that
		\begin{align}
			h\coloneqq&\int_0^T\Big(\scal{\doc{\ve},\ve_{\delta}}+\ii\big(S\cdot\D\ve_{\delta}-\fe\cdot\ve_{\delta}\big)+\inn\s(\ve)\cdot\ve_{\delta}\Big)\eta
		\end{align}
		and
		\begin{align}
			h_{\eps}\coloneqq&\int_0^T\Big(\ii\big(\doc{\ve}_{\eps}\cdot(\ve_{\delta}+\eps\partial_t\ve_{\delta}+\eps\rot\ve_{\eps}\times\ve_{\delta}+\eps\rot\ve_{\delta}\times\ve_{\eps})\nonumber\\
			&+\Sb_{\eps}(\Dv_{\eps})\cdot\D\ve_{\delta}-\fe\cdot\ve_{\delta}\big)+\inn\s_{\eps}(\ve_{\eps})\cdot\ve_{\delta}\Big)\eta+\eps\int_0^T\ii\doc{\ve}_{\eps}\cdot\ve_{\delta}\partial_t\eta
		\end{align}
		are well-defined and finite quantities. Moreover, using the convergence results \eqref{cc1}--\eqref{cc7}, \eqref{cc10}--\eqref{cc14}, Lemma~\ref{Luni} and the property $\ve_{\delta}\in\ve_0^{\delta}+\X^2_{\di}\cap \X^q_{\di}$, it is not hard to show
		\begin{equation}\label{hs}
			h_{\eps}\to h\quad\text{as}\quad\eps\to0_+.
		\end{equation}
		Further, using $\fit\coloneqq(\ve-\ve_{\delta})\eta\in \X^2_{\di}\cap \X^r_{\di}$ as a~test function in \eqref{wea} leads to
		\begin{align}\label{SDv}
			&\int_{Q_T}(-\tfrac12|\ve|^2\partial_t\eta+S\cdot\Dv\eta-\fe\cdot\ve\eta)+\int_{\Sigma_T}\s(\ve)\cdot\ve\eta=h.
		\end{align}
		Next, we use $\pit\coloneqq(\ve_{\eps}-\ve_{\delta})\eta\in \X^2_{\di}\cap \X^q_{\di}$ in \eqref{EL2}, \eqref{a+b}, Young's and H\"older's inequalities, $q>4$, and \eqref{rotw0}, giving
		\begin{align*}
			&\int_{Q_T}(-\tfrac12|\ve_{\eps}|^2\partial_t\eta+\Sb_{\eps}(\Dv_{\eps})\cdot\D\ve_{\eps}\eta-\fe\cdot\ve_{\eps}\eta)+\int_{\Sigma_T}\s_{\eps}(\ve_{\eps})\cdot\ve_{\eps}\eta\\
			&\quad=-\eps\int_{Q_T}(\partial_t\ve_{\eps}\cdot\partial_t(\ve_{\eps}\eta)+(\rot\ve_{\eps}\times\ve_{\eps})\cdot\partial_t(\ve_{\eps}\eta)\\
			&\quad\qquad+2\partial_t\ve_{\eps}\cdot(\rot\ve_{\eps}\times\ve_{\eps})\eta+2|\rot\ve_{\eps}\times\ve_{\eps}|^2\eta)+h_{\eps}\\
			&\quad=\eps\int_{Q_T}\!\!\!\!\big(\tfrac12|\ve_{\eps}|^2\partial_{tt}^2\eta-|\partial_t\ve_{\eps}|^2\eta-3\partial_t\ve_{\eps}\cdot(\rot\ve_{\eps}\times\ve_{\eps})\eta-2|\rot\ve_{\eps}\times\ve_{\eps}|^2\eta\big)+h_{\eps}\\
			&\quad\leq \eps\int_{Q_T}\big(\tfrac12|\ve_{\eps}|^2\partial_{tt}^2\eta+\tfrac14|\rot\ve_{\eps}\times\ve_{\eps}|^2\eta\big)+h_{\eps}\\
			&\quad\leq C(\eta)(\eps\norm{\ve_{\eps}}_{\X^2}^2+\eps^{1-\f4q}\norm{\eps^{\f1q}\ve_{\eps}}_{\X^4}^4)+h_{\eps}\\
			&\quad\leq C(\eta,T)(\eps+\eps^{1-\f4q})+h_{\eps}.
		\end{align*}
		Now, we take the limes superior of this inequality and on the left-hand side we use that $\ve_{\eps}\to\ve$ strongly in $L^2(Q_T;\R^3)$ (by interpolation and Vitali's theorem), the inequality $\Sb_{\eps}(\Dv_{\eps})\cdot\Dv_{\eps}\geq\Sb(\Dv_{\eps})\cdot\Dv_{\eps}$, and in the boundary term we use \eqref{cc4} and Fatou's lemma. This way, we get
		\begin{align*}
			h&\geq\limsup_{\eps\to0_+}\Big(\int_{Q_T}(-\tfrac12|\ve_{\eps}|^2\partial_t\eta+\Sb_{\eps}(\Dv_{\eps})\cdot\D\ve_{\eps}\eta-\fe\cdot\ve_{\eps}\eta)+\int_{\Sigma_T}\s_{\eps}(\ve_{\eps})\cdot\ve_{\eps}\eta\Big)\\
			&\geq\limsup_{\eps\to0_+}\int_{Q_T}\Sb(\Dv_{\eps})\cdot\D\ve_{\eps}\eta+\lim_{\eps\to0_+}\int_{Q_T}(-\tfrac12|\ve_{\eps}|^2\partial_t\eta-\fe\cdot\ve_{\eps}\eta)\\
			&\qquad+\liminf_{\eps\to0_+}\int_{\Sigma_T}\s(\ve_{\eps})\cdot\ve_{\eps}\eta\\
			&\geq\limsup_{\eps\to0_+}\int_{Q_T}\Sb(\Dv_{\eps})\cdot\D\ve_{\eps}\eta+\int_{Q_T}(-\tfrac12|\ve|^2\partial_t\eta-\fe\cdot\ve\eta)+\int_{\Sigma_T}\s(\ve)\cdot\ve\eta
		\end{align*}
		Comparing this with \eqref{SDv} immediately leads to
		\begin{equation*}
			\limsup_{\eps\to0_+}\int_{Q_T}\Sb(\Dv_{\eps})\cdot\Dv_{\eps}\eta\leq\int_{Q_T}S\cdot\Dv\eta.
		\end{equation*}
		Hence, by the monotonicity of $\Sb$, we get, for any $W\in L^r(Q_T;\Sym)$, that
		\begin{align*}
			0&\leq\limsup_{\eps\to0_+}\int_{Q_T}(\Sb(\Dv_{\eps})-\Sb(W))\cdot(\Dv_{\eps}-W)\eta\\
			&\leq\int_{Q_T}(S\cdot\Dv-S\cdot W-\Sb(W)\cdot\Dv+\Sb(W)\cdot W)\eta\\
			&=\int_{Q_T}(S-\Sb(W))\cdot(\Dv-W)\eta.
		\end{align*}
		Choosing now $W=\Dv+\lambda U$, $U\in L^r(Q_T;\Sym)$ and dividing by $\lambda>0$ yields
		\begin{equation*}
			0\leq\int_{Q_T}(\Sb(\Dv+\lambda U)-S)\cdot U\eta
		\end{equation*}
		and, consequently, using the continuity of $\Sb$ to take the limit $\lambda\to0_+$, we arrive at
		\begin{equation*}
			0\leq\int_{Q_T}(\Sb(\Dv)-S)\eta\cdot U.
		\end{equation*}
		Since $U$ is arbitrary, we deduce that $\Sb(\Dv)\eta=S\eta$ a.e.\ in $Q_T$, but since $\eta$ and $T$ are also arbitrary, we conclude that $\Sb(\Dv)=S$ a.e.\ in $Q_{\infty}$ and \eqref{NSdiv} is proved.
		
		In the next step, we prove \eqref{P} by constructing a~pressure in \eqref{NSdiv}. Since the test functions from \eqref{NSdiv} must vanish on $\Gd$, we follow the same construction of pressure as in \cite[Theorem~2.6.]{Wolf}, but only partially, since we do not need a~pressure decomposition here. 
		
		We fix $t\in(0,\infty)$ and choose $\fit=\chi_{(-\infty,t]}\pit$ in \eqref{wea} to get
		\begin{equation}
			\scal{\gb(t),\pit}=0\quad\text{for all}\quad\pit\in\Cl^{\infty}_{\partial,\di},\label{pres}
		\end{equation}
		where
		\begin{align}
			\scal{\gb(t),\pit}&\coloneqq\ii\ve(t)\cdot\pit+\ii\int_0^t(\rot\ve\times\ve)\cdot\pit-\ii\int_0^t\fe\cdot\pit\nonumber\\
			&\quad+\ii\int_0^t\Sb(\Dv)\cdot\D\pit+\inn\int_0^t\s(\ve)\cdot\pit,\quad\pit\in V^{1,r}.\label{gdef}
		\end{align}
		As $V^{1,r}\embl V^{1,2}\embl V^{1,\f32}\embl V^{1,\f65}\embl L^2(\Omega;\R^3)$ and $V^{1,\f32}\embl L^3(\Omega;\R^3)$, the functional $\gb$ can be estimated using \eqref{NSpro1} as
		\begin{align*}
			\norm{\gb(t)}_{V^{-1,r'}}&\leq C\sup_{\norm{\fit}_{1,r}\leq 1}\!\!\Big(\norm{\ve(t)}_2\norm{\fit}_2+\int_0^t\!\norm{\rot\ve}_2\norm{\ve}_6\norm{\fit}_3+\int_0^t\norm{\fe}_2\norm{\fit}_2\\
			&\qquad+\int_0^t\!\norm{\Sb(\Dv)}_{r'}\norm{\D\fit}_{r}+\int_0^t\norm{\s(\ve)}_{r';\Gn}\norm{\fit}_{r;\Gn}\Big)\\
			&\leq C\norm{\ve(t)}_2+C\int_0^t(\norm{\ve}_{1,2}^2+\norm{\fe}_2+\norm{\Sb(\Dv)}_{r'}+\norm{\s(\ve)}_{r';\Gn})\\
			&\leq C(1+t^{\f12}).
		\end{align*}
		Consequently, the relation \eqref{pres} holds also for all $V^{1,r}_{\di}$ and thus, by Proposition~\ref{Prop1}, there exists a~unique function $Q(t)\in L^{r'}_0(\Omega;\R)$ satisfying
		\begin{equation}\label{Stokes}
			-\ii Q(t)\di\fit=\scal{\gb,\fit}\quad\text{for all }\fit\in V^{1,r}
		\end{equation}
		and
		\begin{equation*}
			\norm{Q(t)}_{r'}\leq C(1+t^{\f12}),
		\end{equation*}
		showing that $Q\in L^{\infty}_{\rm loc}(0,\infty;L^{r'}_0(\Omega;\R))$ (the Bochner measurability of $Q$ is a~consequence of the weak continuity of $\ve$ in time). Further, we infer from \eqref{gdef} and \eqref{Stokes} that
		\begin{align}
			&-\int_0^{\infty}\ii\ve\cdot\partial_t\pit+\int_0^{\infty}\Big(\ii(\rot\ve\times\ve)\cdot\pit+\ii\Sb(\D\ve)\cdot\D\pit+\inn\s(\ve)\cdot\pit\Big)\nonumber\\
			&\qquad\qquad\qquad\qquad\qquad
			=\int_0^{\infty}\ii\fe\cdot\pit+\int_0^{\infty}\ii Q\di\partial_t\pit\label{Q}
		\end{align}
		for all $\pit\in\Cl^{\infty}_c((0,\infty);V^{1,r})$. Next, we define the function
		\begin{equation*}
			K(t,x)\coloneqq\int_0^t\tfrac12|\ve(s,x)|^2\dd{s}
		\end{equation*}
		and note, using properties of the Bochner integral, that
		\begin{equation*}
			2\esssup_{(0,T)}\norm{K}_3=\Big\Vert\int_0^{T}|\ve|^2\Big\Vert_3\leq\int_0^T\norm{\ve}_6^2\leq\norm{\ve}^2_{\X^2}\leq C
		\end{equation*}
		for all $T>0$, hence $K\in L^{\infty}(0,\infty;L^3(\Omega;\R))$. Next, integration by parts shows
		\begin{align*}
			-\int_0^{\infty}\ii K\di\partial_t\pit&=\int_0^{\infty}\ii\tfrac12|\ve|^2\di\pit\\
			&=\int_0^{\infty}\int_{\Gf}\tfrac12|\ve|^2\pit\cdot\n-\int_0^{\infty}\ii\nn(\tfrac12|\ve|^2)\cdot\pit,
		\end{align*}
		where we used that $\nn(\tfrac12|\ve|^2)=(\nn\ve)^T\ve$ is summable in $Q_{\infty}$ and that
		\begin{equation*}
			\ve\in L^2(0,\infty;W^{\f12,2}(\partial\Omega;\R^3))\embl L^2(0,\infty;L^4(\partial\Omega;\R^3)).
		\end{equation*}
		Adding this to \eqref{Q}, recalling \eqref{dotu} and defining
		\begin{equation*}
			P_0\coloneqq Q+K\in L^{\infty}_{\rm loc}(0,\infty;L^{r'}(\Omega;\R))
		\end{equation*}
		leads to \eqref{P} with $P_0$ instead of $P$. Finally, we remark that the form of \eqref{P} remains unchanged if the pressure is shifted by a~function $E\in L^{\infty}_{\rm loc}(0,\infty;\R)$ of time only. Indeed, this is a~consequence of 
		\begin{equation*}
			\int_0^{\infty}\ii E\di\partial_t\pit=\int_0^{\infty}\!\!\!E\,\partial_t\int_{\partial\Omega}\pit\cdot\n=0\quad\text{for all }\pit\in\Cl^{\infty}_c((0,\infty);V^{1,r}).
		\end{equation*}
		The choice $P\coloneqq P_0-\f1{|\Omega|}\ii K+D$ then leads precisely to \eqref{Ppr} and \eqref{P}.

                \subsection{\UUU Identification of boundary conditions}
		To conclude the proof, it remains to identify the boundary conditions on $\Gn\cup\Gf$, that are encoded implicitly in \eqref{P}. Let us choose $\pit=\psi\fit$ with $\fit\in V^{1,r}$ and $\psi\in\Cl^1_c((0,\infty);\R)$ fixed, use the fact that $S\cdot\D\fit=S\cdot\nn\fit$ whenever $S$ is a~symmetric matrix, and rewrite \eqref{P} as
		\begin{align}\label{cas}
			\int_{\Omega}\T_{\psi}\cdot\nn\fit&=\int_{\Omega}\int_0^{\infty}(\ve\partial_t\psi-(\nn\ve)\ve\psi+\fe\psi)\cdot\fit\nonumber\\
			&\qquad+\int_{\Gf}\int_0^{\infty}\tfrac12|\ve|^2\n\psi\cdot\fit-\inn\int_0^{\infty}\s(\ve)\psi\cdot\fit.
		\end{align}
		In particular, by choosing $\fit$ with compact support in $\Omega$ and taking into consideration that $\nn\ve\in L^r(0,\infty;L^r(\Omega;\R^3))\embl L^r(0,\infty;L^2(\Omega;\R^3))$ and $\ve\in\X^r_{\di}\embl L^r(0,\infty;L^6(\Omega;\R^3))$ imply $(\nn\ve)\ve\in L^{\f r2}(0,\infty;L^{\f32}(\Omega;\R^3))$, we read from \eqref{cas} that
		\begin{equation*}
			\di\T_{\psi}\in L^{\f32}(\Omega;\R^3).
		\end{equation*}
		Hence, we can define a~continuous linear functional 
		\begin{equation}\label{Tnp}
			\T_{\psi}\n\in (W^{\f1{r'},r}(\partial\Omega;\R^d))'
		\end{equation}
		by the formula
		\begin{equation*}
			\scal{\T_{\psi}\n,\we}\coloneqq\ii(\di\T_{\psi}\cdot E\we+\T_{\psi}\cdot\nn E\we)\quad\text{for all }\we\in W^{\f1{r'},r}(\partial\Omega;\R^d),
		\end{equation*}
		where $E:W^{\f1{r'},r}(\partial{\Omega};\R^3)\to W^{1,r}(\Omega;\R^3)$ is the continuous linear trace-extension operator (inverse of the trace operator). Then, integration by parts in \eqref{cas} yields
		\begin{align}
			&\int_{\Omega}\Big(\di\T_{\psi}-\int_0^{\infty}(\ve\partial_t\psi-(\nn\ve)\ve\psi+\fe\psi)\Big)\cdot\fit\nonumber\\
			&\quad=\langle\T_{\psi}\n,\fit\rangle_{\Gf}-\int_{\Gf}\int_0^{\infty}\tfrac12|\ve|^2\n\psi\cdot\fit\nonumber\\
			&\qquad+\langle\T_{\psi}\n,\fit\rangle_{\Gn}+\inn\int_0^{\infty}\s(\ve)\psi\cdot\fit.\label{rov}
		\end{align}
		for all $\fit\in V^{1,r}$ and $\psi\in\Cl^1_c((0,\infty);\R)$. Since this is true in particular for every $\fit\in\Cl^{\infty}_c(\Omega;\R^3)$, we recover the Navier-Stokes equation in the form
		\begin{equation*}
			\di\T_{\psi}=\int_0^{\infty}(\ve\partial_t\psi-(\nn\ve)\ve\psi+\fe\psi)\quad\text{a.e.\ in }\Omega,
		\end{equation*}
		and, consequently, returning to \eqref{rov}, also
		\begin{align}
			&\langle\T_{\psi}\n,\fit\rangle_{\Gf}-\int_{\Gf}\int_0^{\infty}\tfrac12|\ve|^2\n\psi\cdot\fit\nonumber\\
			&\qquad+\langle\T_{\psi}\n,\fit\rangle_{\Gn}+\inn\int_0^{\infty}\s(\ve)\psi\cdot\fit=0\quad\text{for all}\quad\fit\in V^{1,r}.\label{varbc}
		\end{align}		
		Let $\we\in W^{1,\infty}_{0,\n}(\Gn;\R^3)$ and extend it by zero to $\partial\Omega$. Then, we have $\we\in W^{1,\infty}(\partial\Omega;\R^3)$ and $\int_{\partial\Omega}\we\cdot\n=0$. Therefore, $\we$ can be extended to a~Lipschitz function in $\Omega$ such that $\we\in V^{1,r}$. Hence, \eqref{varbc} yields
		\begin{equation*}
			\langle\T_{\psi}\n,\we\rangle_{\Gn}+\inn\int_0^{\infty}\s(\ve)\psi\cdot\we=0\quad\text{for all}\quad\we\in W^{1,\infty}_{0,\n}(\Gn;\R^3).
		\end{equation*}
		Recalling \eqref{Tnp} and \eqref{LM0n}, this remains valid for all $\we\in W^{\f1{r'},r}_{0,\n}(\Gn;\R^3)$, proving \eqref{bc3}. Next, let $0\leq i\leq n$ and choose $\we\in W^{1,\infty}_0(\Gf^i;\R^3)$ such that $\int_{\Gf^i}\we\cdot\n=0$, extend this function by zero to whole $\partial\Omega$, and use \eqref{varbc} to deduce
		\begin{equation}
			\scal{G,\we}=0\quad\text{for all}\quad\we\in W^{1,\infty}_0(\Gf^i;\R^3)\quad\text{with}\quad\int_{\Gf^i}\we\cdot\n=0,\label{ide}
		\end{equation}
		where we abbreviated
		\begin{equation*}
			\scal{G,\we}\coloneqq \langle\T_{\psi}\n,\we\rangle_{\Gf^i}-\int_{\Gf^i}\int_0^{\infty}\tfrac12|\ve|^2\n\psi\cdot\we.
		\end{equation*}
		The restriction of $\we$ on $\Gf^i$ is a~consequence of the prescribed net fluxes and of the incompressibility of $\ve$, recall the definition of $V^{1,r}$. Since $\Gf^i$ is locally a~graph of a~Lipschitz function, there exists a~vector field $\etb_i\in W^{1,\infty}(\Gf^i;\R^3)$ such that $\etb_i\cdot\n>0$ on $\Gf^i$. Moreover, it is clear that $\etb_i$ can be chosen in a~way that $\etb_i\in W^{1,\infty}_0(\Gf^i;\R^3)$ and $\int_{\Gf^i}\etb_i\cdot\n=1$. Let $\z\in W^{1,\infty}_0(\Gf^i;\R^3)$. Then, the function
		\begin{equation*}
			\we\coloneqq\z-\etb_i\int_{\Gf^i}\z\cdot\n
		\end{equation*}
		can be used in \eqref{ide}, leading to
		\begin{equation*}
			\scal{G,\z}=\scal{G,\etb_i}\int_{\Gf^i}\z\cdot\n=\int_{\Gf^i}\scal{G,\etb_i}\n\cdot\z=\int_{\Gf^i}(c_i)_{\psi}\n\cdot\z.
		\end{equation*}
		Since $\z\in W^{1,\infty}_0(\Gf^i;\R^3)$ was arbitrary
                and $\n\in L^{\infty}(\Gf^i;\R^3)$, we deduce
                \eqref{bc2} by virtue of \eqref{LM0} and \eqref{Tnp}.  
        \qed
	
	The bounds $r\geq\f{11}5$ and $q>4$ are evidently used only to
        identify that the weak limit of $\Sb_{\eps}(\Dv_{\eps})$ is
        $\Sb(\Dv)$. The assumption $r\geq\f{11}5$ greatly simplifies
        the identification procedure since then the function $\ve$ can
        be used (after minor corrections) in \eqref{NSdiv} as a~test
        function. But it is unlikely that this bound for $r$ is
        necessary since in the mathematical theory of non-Newtonian
        fluids, more refined arguments are known for the limit
        identification. Unfortunately, the application of the methods
        of either \cite{Wolf}, or \cite{Sebastian}  to our
        $\eps$-approximation scheme seems not
        straightforward. Therefore, the case $\f65<r<\f{11}5$,
        $r\neq2$ is left open. However, this drawback seems not so
        significant in the view of the fact that our method works for
        $p$-fluids with $p$ sufficiently large that can approximate
        a~flow of an $r$-fluid for arbitrary $r>\f65$, see the proof
        of \cite[Theorem~3.1.]{Sebastian}, effectively avoiding the
        $\eps$-approximation.

        \UUU
\section*{Acknowledgements}  
This research is supported by the Austrian Science Fund (FWF) projects
F\,65, W\,1245,  I\,4354, I\,5149, and P\,32788 and by the OeAD-WTZ project CZ 01/2021.
        \EEE

	\bibliographystyle{siam}

\end{document}